\newtheorem{theorem}{Theorem}[section]
\newtheorem{lemma}[theorem]{Lemma}
\newtheorem{proposition}[theorem]{Proposition}
\theoremstyle{definition}
\newtheorem{definition}[theorem]{Definition}
\newtheorem{example}[theorem]{Example}
\theoremstyle{remark}
\numberwithin{equation}{section}
\newcommand{\N}{\mathbb{N}}
\newcommand{\R}{\mathbb{R}}
\newcommand{\conv}{\text{conv}}
\title{Projections of Totally Disconnected thin Fractals with very thick shadows on $\R^d$}
\author{Chun-Kit Lai}
\address[Chun-Kit Lai]{Department of Mathematics, San Francisco State University,
1600 Holloway Avenue, San Francisco, CA 94132.}
 \email{cklai@sfsu.edu}
\author{Lekha Priya Patil}
\address[Lekha Priya Patil]{Department of Mathematics, San Francisco State University,
1600 Holloway Avenue, San Francisco, CA 94132.}
\email{lpatil@sfsu.edu}
\address[Lekha Priya Patil]{Department of Mathematics, University of California, Irvine,
400 Physical Sciences Quad, Irvine, CA 92697.}
 \email{patill@uci.edu}
\subjclass[2010]{28A80}
\keywords{Orthogonal Projections, Self-affine sets, Hausdorff dimensions, Rectifiable sets}
\begin{document}

\begin{abstract}
We study an extreme scenario of the Mastrand projection theorem for which a fractal has the property that its orthogonal projection is the same as the   orthogonal projection of its convex hull. We extend results in current literature and establish checkable criteria for self-affine sets to have such property. Using this, we show that every convex polytope on $\R^d$ contains a totally disconnected compact set, which is a union of self-affine sets, of dimension as close to 1 as possible, as well as a rectifiable 1-set, such that the fractal projects to an interval in every 1-dimensional subspace and its convex hull is the given polytope. Other convex sets and projections onto higher dimensional subspaces will also be discussed.      
\end{abstract}

\maketitle

\section{Introduction}
The Marstrand projection theorem \cite{marstrand1954some}, which describes the relationship between the dimensions of Borel sets and their projections through a measure-theoretic statement, is now the landmark result of the projection theory of fractals.  
\begin{theorem}[Marstrand, 1954]
    Let $E \subset \R^2$ be a Borel set, and let $\alpha = \dim_H E$. If $\alpha \leq 1$, then
    \[\dim_H(\pi_\theta(E)) = \alpha \; \; \; \; \; \; \text{   for almost every } \theta\in[0,\pi).\]
    If $\alpha > 1$, then
    \[\mathcal{L}^1(\pi_\theta((E)) > 0\; \; \; \; \; \; \text{   for almost every } \theta\in[0,\pi),\]
    where $\mathcal{L}^1$ is the Lebesgue measure. 
\end{theorem}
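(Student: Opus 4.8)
The plan is to establish both halves through the potential-theoretic characterization of Hausdorff dimension: for a compact set $K$, $\dim_H K$ equals the supremum of those $s$ for which $K$ carries a Borel probability measure $\mu$ of finite Riesz $s$-energy $I_s(\mu)=\iint |x-y|^{-s}\,d\mu(x)\,d\mu(y)$ (Frostman's lemma together with the standard energy estimate). By inner regularity it suffices to treat compact $E$. Throughout I write $e_\theta=(\cos\theta,\sin\theta)$, so $\pi_\theta(x)=x\cdot e_\theta$, and I push measures forward by $\mu_\theta:=(\pi_\theta)_\ast\mu$. The easy direction is the upper bound in the regime $\alpha\le 1$: each $\pi_\theta$ is $1$-Lipschitz, so $\dim_H(\pi_\theta E)\le \dim_H E=\alpha$ for \emph{every} $\theta$ (and trivially $\le 1$). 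Hence the real content is the matching lower bound for almost every $\theta$.

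For the lower bound when $\alpha\le 1$, fix $s<\alpha$ and use Frostman to select a probability measure $\mu$ on $E$ with $I_s(\mu)<\infty$. I would average the energies of the projected measures over all directions and apply Fubini:
\[
\int_0^\pi I_s(\mu_\theta)\,d\theta=\iint\left(\int_0^\pi |(x-y)\cdot e_\theta|^{-s}\,d\theta\right)d\mu(x)\,d\mu(y).
\]
Writing $x-y$ in polar form shows the inner integral equals $|x-y|^{-s}\int_0^\pi |\cos\psi|^{-s}\,d\psi$, and the angular constant $c_s=\int_0^\pi|\cos\psi|^{-s}\,d\psi$ is finite \emph{precisely because} $s<1$. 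Thus $\int_0^\pi I_s(\mu_\theta)\,d\theta=c_s\,I_s(\mu)<\infty$, so $I_s(\mu_\theta)<\infty$ for a.e.\ $\theta$, giving $\dim_H(\pi_\theta E)\ge s$ for a.e.\ $\theta$. Intersecting the resulting full-measure sets along a sequence $s_n\uparrow\alpha$ yields $\dim_H(\pi_\theta E)\ge\alpha$ a.e., which combined with the upper bound proves equality.

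The regime $\alpha>1$ is where the argument must be rebuilt, and this is the main obstacle: the constant $c_s$ diverges as $s\uparrow 1$ (logarithmically near $\psi=\pi/2$), so the same energy integral breaks down, and indeed the $1$-energy of a measure on a line is generically infinite. The remedy is Fourier-analytic: I would show $\mu_\theta$ is absolutely continuous with an $L^2$ density for a.e.\ $\theta$. Since $\alpha>1$, Frostman supplies $\mu$ on $E$ with $I_1(\mu)<\infty$. Using the Fourier representation of Riesz energy in $\R^2$, namely $I_1(\mu)=\gamma\int_{\R^2}|\hat\mu(\xi)|^2|\xi|^{-1}\,d\xi$ for a positive constant $\gamma$, together with the slicing identity $\widehat{\mu_\theta}(t)=\hat\mu(te_\theta)$, I pass to the coordinates $\xi=te_\theta$ with $t\in\R$, $\theta\in[0,\pi)$ and $d\xi=|t|\,dt\,d\theta$:
\[
\int_0^\pi \int_{\R}|\widehat{\mu_\theta}(t)|^2\,dt\,d\theta = \int_{\R^2}|\hat\mu(\xi)|^2|\xi|^{-1}\,d\xi = \gamma^{-1} I_1(\mu) < \infty .
\]
Hence $\widehat{\mu_\theta}\in L^2(\R)$ for a.e.\ $\theta$, and by Plancherel $\mu_\theta$ has an $L^2$ density, so $\mu_\theta\ll\mathcal{L}^1$. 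As $\mu_\theta$ is a nonzero measure carried by $\pi_\theta(E)$, the set on which its density is positive has positive length and lies in $\pi_\theta(E)$, giving $\mathcal{L}^1(\pi_\theta(E))>0$ for a.e.\ $\theta$.

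The step I expect to require the most care is the transition at $s=1$: the clean energy computation of the subcritical case genuinely fails, and one must recognize that the correct quantity to control is no longer an energy but the averaged $L^2$-norm of the projected measures, which the Fourier identity above converts back into the finite planar $1$-energy of $\mu$. Verifying the Fourier representation of $I_1$ and the Plancherel step (ensuring the square-integrable Fourier transform genuinely corresponds to an $L^2$ density of the projected measure, rather than merely a distribution) is the technical heart of the argument.
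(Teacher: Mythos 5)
Your proof is correct, but you should know that the paper does not prove this statement at all: it quotes Marstrand's theorem as background, citing \cite{marstrand1954some}, and none of the paper's machinery (self-affine IFS, convex-hull criteria) is directed at it. What you have written is the now-standard potential-theoretic proof, due in essence to Kaufman \cite{Kaufman1968} and presented in \cite{Mattila_1995}: Frostman's lemma, the angular average $\int_0^\pi |(x-y)\cdot e_\theta|^{-s}\,d\theta = c_s|x-y|^{-s}$ with $c_s<\infty$ precisely when $s<1$, and, past the critical exponent, the identity $\int_0^\pi\int_{\R}|\widehat{\mu_\theta}(t)|^2\,dt\,d\theta=\gamma^{-1}I_1(\mu)$ obtained from $\widehat{\mu_\theta}(t)=\hat\mu(te_\theta)$ and polar coordinates, followed by Plancherel. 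The steps you flag as delicate are genuine but standard: the reduction to compact sets rests on the nontrivial fact (Frostman's lemma for Borel sets, via Davies' capacitability argument) that a Borel set contains compact subsets of dimension arbitrarily close to its own, and your Plancherel step correctly upgrades $\widehat{\mu_\theta}\in L^2$ to $\mu_\theta$ having an $L^2$ density, hence $\mathcal{L}^1(\pi_\theta(E))>0$. Marstrand's original 1954 argument was a direct geometric one, analyzing densities and line intersections rather than energies; the route you follow is shorter, explains cleanly why $s=1$ is critical (divergence of $c_s$), and is the one that generalizes to projections onto $k$-planes in $\R^d$ and to exceptional-set estimates such as the Kaufman bound quoted in this paper's introduction. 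In short: a correct proof by the standard modern route, with no gap --- just be aware it reconstructs neither anything in this paper nor Marstrand's own method.
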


This result can  be generalized to higher dimensions with projections on  $k$-dimensional subspaces, see \cite{Mattila_1995} for more details. There has been a huge amount of research studying different ways of improving this theorem under various conditions. In one direction, one can obtain a more precise dimension estimate for the  exceptional set of directions. This starts from the result of Kaufman \cite{Kaufman1968} who proved that if $s<\dim_H(E)<1$, then 
$$
\dim_H\{\theta: \dim_H(\pi_{\theta}(E))<s\} \le s.
$$
There has been substantial improvement and generalization about this topic. 
In the other direction, people are interested in determining the classes of sets for which ``almost all" can be replaced by ``all" in the statement of the Mastrand's theorem.  For example,  Peres and Shmerkin \cite{PS_2009} showed that self-similar sets with dense rotation group satisfies $\dim_H(\pi_{\theta}(K))= \min \{\dim_H(K), 1\}$ for {\it all} $\theta\in [0,\pi)$. We refer the reader to \cite{falconer2015sixty} by Falconer, Fraser, and Jin for some of these recent developments. 

\medskip

This paper considers the Marstrand projection theorem in its most extreme scenario. In particular, we examine how small the Hausdorff dimension of a compact totally disconnected set on $\R^d$ can be while still ensuring its projections are exactly an interval in {\it all} one-dimensional subspaces. We aim at presenting a systematic study of such sets and give different interesting examples. Let us set up the following two terminologies:

\begin{definition} Let $K$ be a Borel set on $\R^d$ with  convex hull $C$ and let $1\le k<d$. We say that $K$ {\bf projects a very thick shadow  on $k$-dimensional subspace $W$} where $W\in G(d,k)$ if we have  $\pi_W (K) = \pi_W (C)$. We say that $K$ has the ($k$-dimensional) {\bf everywhere very thick shadow condition} if $K$ projects very thick shadows on every $W\in G(d,k)$.
\end{definition}

Here, $G(d,k)$ is the set of all $k$-dimensional subspaces in $\R^d$.  We also have a dual concept for convex sets.

\begin{definition} Let $C$ be a closed convex set in $\R^d$. We say that $C$ is \textbf{ ($k$-dimensional) fractal-decomposable} with a set $K$ if  
\begin{enumerate}
    \item $\conv (K) = C$, and
    \item $\pi_W (K) = \pi_W (C)$ for all $W \in G(d, k)$.  
\end{enumerate}
\end{definition}

Clearly, if $K$ is connected, the projection of $K$ will be connected and hence the convex hull of $K$ is 1-dimensional fractal-decomposable. Our main interest will be asking how ``disconnected" $K$ can be if it satisfies the above two definitions.  Moreover, we would hope that $K$ possesses a nice topology and geometry, so compactness of $K$ is desirable  as well as that $K$ is generated by self-similar or self-affine iterated function systems.

\medskip

  From Mastrand's theorem, we know already that the set must have Hausdorff dimension at least one. Another landmark projection theorem by Bescovitch also showed that purely unrectifiable 1-sets cannot be fractal decomposable either (See Section \ref{Section6} for the precise definitions about rectifiability). Our main result below demonstrates that other than these two constraints, all convex polytopes are fractal decomposable using compact sets of dimension close to 1 or even rectifiable 1-sets.

\begin{theorem}\label{theorem_Main1}
    Let $C$ be a convex polytope on $\R^d$. Then 
    \begin{enumerate}
        \item For all $\epsilon>0$, there exists a totally disconnected compact set $K$, with $1\le \mbox{dim}_H(K)\le 1+\epsilon$ and $K$ is a finite union of self-affine sets, such that $C$ is 1-dimensional fractal-decomposable with the set $K$. 
        \item There exists a totally disconnected compact rectifiable 1-set $K$ such that $C$ is 1-dimensional fractal-decomposable with the set $K$. 
    \end{enumerate}
\end{theorem}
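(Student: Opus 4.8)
The plan is to combine a \emph{connectedness criterion} for thick shadows with a reduction to the $1$-skeleton of the polytope, and then to build an explicit thin self-affine gadget along each edge. First I would isolate the tool on which everything rests: if $K=\bigcup_{i=1}^m f_i(K)$ is the attractor of a contractive affine IFS and the set $\bigcup_{i=1}^m f_i(\conv K)$ is \emph{connected}, then $K$ has the everywhere very thick shadow condition, i.e. $\pi_W(K)=\pi_W(\conv K)$ for every $W\in G(d,1)$. The reason is that the very thick shadow condition is equivalent to the affine-invariant statement that for each $v\in S^{d-1}$ the functional $x\mapsto\langle x,v\rangle$ attains on $K$ its full range on $\conv K$; since $\conv(\bigcup_i f_i(\conv K))=\conv K$, connectedness forces $\langle \cdot,v\rangle(\bigcup_i f_i(\conv K))$ to be the full interval for every $v$, and the same covering recurs inside each cylinder $f_{i_1}\cdots f_{i_n}(\conv K)$ because $\langle \cdot,v\rangle\circ f_{i_1}$ is again a functional in the rotated direction $A_{i_1}^{\!\top}v$. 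A nested-interval argument then produces, for each target value $t$, a point of $K$ whose projection equals $t$. I would state and prove this as the checkable criterion the paper advertises.

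Second, I would reduce the polytope to its edges. Writing $E$ for the $1$-skeleton of $C$, the vertex-edge graph is connected, so $\pi_W(E)$ is a connected subset of $\R$ containing the extreme projected vertices; hence $\pi_W(E)=\pi_W(C)$ for every line $W$, and $\conv(E)=C$. Thus it suffices to replace each edge $e$ by a totally disconnected self-affine set $K_e$ whose convex hull $C_e$ is a thin sliver inside $C$ containing the endpoints of $e$, with $\pi_W(K_e)=\pi_W(C_e)$ for all $W$. Then $K=\bigcup_e K_e$ is a finite union of self-affine sets with $\conv(K)=C$ and $\pi_W(K)=\bigcup_e\pi_W(K_e)\supseteq\bigcup_e\pi_W(e)=\pi_W(C)$, forcing equality; total disconnectedness of the finite union follows from the countable closed sum theorem for zero-dimensional sets, while $\dim_H K=\max_e\dim_H K_e$, the lower bound $\dim_H K\ge 1$ coming from the fact that the $1$-Lipschitz map $\pi_W$ cannot raise dimension and $\pi_W(K)$ is an interval.

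Third, and this is the heart of the matter, I would construct the building block $K_e$. After an affine normalization I may assume $e$ runs along a coordinate axis and $C_e$ is a thin box. The idea is to chain $n$ affine copies along the long direction with longitudinal ratios chosen so that consecutive \emph{convex hulls} $f_i(C_e)$ overlap — making $\bigcup_i f_i(C_e)$ connected, hence giving the full shadow in every direction by the criterion — while the transverse contraction ratios are taken very small so that the actual pieces $f_i(K_e)$ are thin and the box is mostly empty; a subadditive pressure computation then shows that taking $n$ large and the transverse eccentricity small drives the affinity (and hence Hausdorff) dimension down to $1+\epsilon$. The scheme is consistent precisely because thick shadows are a property of the \emph{convex hulls} of the pieces, whereas thinness and total disconnectedness are properties of the pieces themselves, so the two demands need not collide. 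I expect the main obstacle to lie exactly here: arranging overlapping convex hulls (needed for a gap-free shadow in \emph{all} directions simultaneously, and in every ambient dimension $d$) while giving a \emph{verifiable} guarantee that the resulting attractor is totally disconnected and of dimension at most $1+\epsilon$.

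Finally, for part (2) I would pass from ``dimension $1+\epsilon$'' to a genuine rectifiable $1$-set. A single self-affine attractor of exact dimension $1$ is too rigid, so I would instead use a \emph{countable} union of short straight segments along each edge: a Besicovitch/Venetian-blind-type arrangement of tiny segments, positioned and tilted so that their one-dimensional shadows still overlap to fill $\pi_W(C_e)$ in every direction, but with total length $\sum(\text{lengths})<\infty$ and the union kept totally disconnected. Being a countable union of segments of finite $\mathcal{H}^1$-measure, such a $K$ is $1$-rectifiable by definition; the delicate verification is that the overlapping-shadow (equivalently, no-separating-hyperplane) condition survives the limit in every direction while $\mathcal{H}^1(K)$ stays finite, which I regard as the technical crux of this part.
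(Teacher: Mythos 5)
Your opening tool is correct: connectedness of $\bigcup_i f_i(\conv K)$ does imply the everywhere very thick shadow condition, by exactly the inverse-iteration argument the paper uses to prove Theorem \ref{classification theorem1}, and your reduction of $\pi_W(C)$ to the $1$-skeleton is sound. But the heart of part (1) --- the edge gadget --- is never constructed, and you flag this yourself: you exhibit no maps with overlapping hulls together with a verifiable proof that the attractor is totally disconnected and has affinity dimension at most $1+\epsilon$. Moreover, your normalization ``$C_e$ is a thin box containing the endpoints of $e$'' fails geometrically: at a vertex where $e$ meets the adjacent faces at an acute angle, any sliver of positive transverse width containing that endpoint protrudes outside $C$, while a degenerate sliver forces $K_e=e$ (every hyperplane orthogonal to $e$ through a point $x\in e$ meets $e$ only at $x$, so $K_e$ would be the whole segment, which is connected). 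The paper sidesteps both difficulties: it triangulates $C$ into \emph{full-dimensional} simplices and writes down, for each simplex, an explicit self-affine IFS with $d+1$ maps whose first-level images are \emph{pairwise disjoint} --- so total disconnectedness is immediate --- and verifies thick shadows via the strictly weaker criterion of Theorem \ref{classification theorem2}: the convex hulls of any complementary grouping of first-level components intersect (Proposition \ref{prop_thick_simplices}). Your connectedness criterion can never hold with pairwise disjoint pieces, so it forces you into the delicate overlapping regime you did not control; the paper's bipartition criterion is designed precisely to avoid it. The dimension bound is then a singular-value-function estimate on the affinity dimension (Proposition \ref{prop_Hdim}), not a pressure computation you would still have to supply.

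Part (2) contains an outright error, not merely a gap: a union of nondegenerate straight segments can \emph{never} be totally disconnected, since each segment lies in a nontrivial connected component, so ``tiny segments \dots\ kept totally disconnected'' is self-contradictory, and with it the shortcut ``rectifiable by definition'' disappears. The paper's Lemma \ref{tot-discon-rect-set-lemma} instead iterates the Venetian blind splitting and passes to the Hausdorff-metric limit $E=\lim_n E_n$, in which no segment survives; total disconnectedness comes from nested separating covers at dyadic abscissae, the interval projections survive the limit by a compactness argument, and rectifiability must then be \emph{earned}: $E$ is shown to lie in a continuum $\Gamma=\lim_n\Gamma_n$ of finite length using the lower semicontinuity of $\mathcal{H}^1$ on continua (Theorem \ref{Falconer85}), while $\mathcal{H}^1(E)>0$ is forced by the Besicovitch projection theorem, since a null (hence purely unrectifiable) $E$ could not project onto intervals in all directions. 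Finally, fitting the blinds inside the polytope near acute vertex angles requires countably many gadgets shrinking toward the vertex (Proposition \ref{prop_dim2}, case (ii)), and the case $d>2$ is handled by induction on dimension through the half-space/face structure of $C$ --- neither of which your sketch addresses.
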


The very first totally disconnected compact set which projects very thick shadows on every line was first discovered by Mendivil and Taylor \cite{mendivil2008thin}. Without noticing Mendivil-Taylor's work,  Falconer and Fraser \cite{falconer2013visible} demonstrated some examples of self-similar sets that project very thick shadows while studying the visibility conjecture of fractals. They mentioned a checking criterion for self-similar sets having very thick shadows everywhere. Farkas \cite{farkas2020interval} studied interval projections of self-similar sets and gave an example of a totally disconnected self-similar fractal of dimension arbitrarily close to 1 existing inside the unit square. In the example, rotations were required and there was no indication that the example could be generalized to other polygons or to higher dimensions. We will give an example based only on fractal squares and rigorously prove the criterion used by previous researchers on $\R^d$ for checking when a self-affine set has a very thick shadow everywhere. Moreover, as another interesting contribution of the paper, for the projection onto all 1-dimensional subspaces, we will provide another natural criterion using the connected components of the first iteration (see Theorem \ref{classification theorem1} and Theorem \ref{classification theorem2}). We can prove Theorem \ref{theorem_Main1} (1) conveniently with this new criterion. 

\medskip

The everywhere very thick shadow condition has been a useful sufficient condition to study the visibility conjecture: {\it If $F$ is Borel and $\dim_H(F)\ge 1$, then the Hausdorff dimension of the visible part of $F$ in the direction $\theta$ is equal to 1 for almost all angles $\theta$. }
Faloner and Fraser \cite{falconer2013visible} showed that the conjecture is true for self-similar sets under the convex open set condition on $\R^2$ and the everywhere very thick shadow condition. Other more general cases for the visibility conjecture of self-affine or self-similar sets were studied in \cite{Rossi,JJMSW}. Therefore, the construction given in this paper produces  examples for the visibility conjecture.

\medskip

Theorem \ref{theorem_Main1} gives a complete answer to convex polytopes. One may be interested in other convex sets as well as projections onto other subspaces of dimension greater than 1. Unfortunately, with certain simple observations, total disconnectedness is indeed an impossible requirement for fractal decomposability if we go beyond the condition for Theorem \ref{theorem_Main1}. A thorough discussion will be given in the last section of the paper.

\medskip

We will organize our paper as follows: In Section 2, we will set up our terminologies used in this paper. In Section 3, we will prove our classification theorem for self-affine sets. In Section 4, we will discuss \st{the} fractal decomposablility for unit cubes. Examples existing in current literature are also exhausted in this section. In Section 5, we  will discuss the fractal decomposablility for general polytopes using self-affine sets. In Section 6, we will discuss the fractal decomposablility for polytopes using rectifiable sets. Theorem \ref{theorem_Main1} will be proved in Section 5 and 6. Finally, we close our paper with remarks on general cases. 

,

\medskip

\section{Preliminaries}

The goal of this section is to set up the basic terminologies in this paper. Let $\Phi = \{\phi_1,...,\phi_N\}$ be a collection of contractive maps from $\R^d\to \R^d$. $\Phi$ generates an {\bf iterated function system (IFS)} with a unique non-empty compact attractor $K = K_{\Phi}.$ A {\bf self-affine} IFS consists of maps $\phi_i(x) = T_i x+t_i$ where $T_i$ are invertible matrices on $\R^d$ with the operator norm $\|T_i\|<1$ and $t_i\in\R^d$. A {\bf self-similar} IFS is a special case of a self-affine IFS in which all $T_i $ are of the form $r_i O_i$, where $0<r_i<1$ and $O_i$ is an orthogonal transformation. 

\medskip

We will use the standard multi-index notation to describe our IFS. Namely, we let $\Sigma = \{1,...,N\}$ and $\Sigma^k  = \Sigma\times...\times \Sigma$ ($k$ times). For each $\sigma =(\sigma_1,...,\sigma_k)\in \Sigma^k $, 
$$
\phi_{\sigma}(x) = \phi_{\sigma_1}\circ...\circ\phi_{\sigma_k}(x). 
$$

\medskip
 It is well-known that the Hausdorff dimension of the attractor of a self-similar IFS under the open set condition is the unique $s$ such that 
$$
\sum_{i=1}^Nr_i^s = 1.
$$
The Hausdorff dimension of a self-affine IFS is, however, much more difficult to compute. Let $\Phi = \{\phi_1, \ldots, \phi_N\}$ be a self-affine IFS. Write each $\phi_i(x) = T_ix+b_i$, where $T_i$ are linear transformations. Let $T: \R^n \to \R^n$ be a linear mapping that is contracting and nonsingular. The \textbf{singular values} $1 > \alpha_1 \geq \cdots \geq \alpha_N > 0$ of $T$ are the positive square roots of the eigenvalues of $T^*T$, where $T^*$ is the adjoint of $T$. For $0 \leq s \leq n$, we define the \textbf{singular value function} as
\[\varphi^s (T) = \alpha_1 \alpha_2 \cdots \alpha_{r-1} \alpha_r^{s-r+1},\]
where $r$ is the smallest integer greater than or equal to $s$. We define the affinity dimension  as
\begin{equation}\label{affinity}
    \dim_a(\Phi) = \inf\left\{ s: \sum_{k=1}^\infty \sum_{\sigma\in \Sigma^k} \varphi^s(T_{\sigma}) < \infty \right\} 
\end{equation}
Falconer proved that 
$$
\dim_H(K_{\Phi})\le \dim_a(\Phi)
$$
 and that equality holds for almost all translations as long as $\|T_i\|\le 1/3$ \cite{Falconer_88}. Soloymak later relaxed the condition to $\|T_i\| \le 1/2$ \cite{solo_98}. One cannot expect equality to hold generally for the well-known Bedford-McMullen carpet. In a recent deep result by B\'{a}r\'{a}ny, Hochman and Rapaport \cite{barany2019hausdorff}, however, equality does hold for affine IFS under certain natural separation and irreducibility conditions.

\medskip

\subsection{Total disconnectedness.}

    A topological space $X$ is totally disconnected if the only connected components of $X$ are the singletons. 
Many definitions of total disconnectedness appear in the literature. For the convenience of the reader, we collect all alternate equivalent definitions below that are useful for our discussions. This theorem follows from  Propositions 3.1.8 and 3.1.11 in \cite{edgar2008measure} as well as Theorem 29.7 in \cite{willard2012general} \footnote{In Theorem 29.7 of \cite{willard2012general}, rim-compact means that every point has a open set with compact boundary, which is true for a locally compact space, see p.288 of the same book}.

\begin{theorem}\label{theorem_tD} If $X$ is a locally compact separable metric space, then the following are equivalent:
\begin{enumerate}
    \item All connected components are singletons.
    \item For all $x\ne y$, there exist disjoint open sets $U, V$ such that $x\in U$ and $y\in V$ where $U \cup V = X$.
    \item For every disjoint closed sets $E, \, F$, there exist disjoint open sets $U, V$ such that $E \subset U$ and $F \subset V$ where $U \cup V = X$.
    \item There exists basis of $X$ such that every basis element is a clopen set in $X$. 
\end{enumerate}
\end{theorem}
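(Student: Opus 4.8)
The plan is to establish the four conditions cyclically, proving $(1)\Rightarrow(4)\Rightarrow(3)\Rightarrow(2)\Rightarrow(1)$, so that the genuine topological content is isolated in a single implication while the rest become essentially formal. Two of the arrows are immediate. For $(3)\Rightarrow(2)$, since $X$ is metric every singleton is closed, so one simply applies $(3)$ to the disjoint closed sets $E=\{x\}$ and $F=\{y\}$. For $(2)\Rightarrow(1)$, if some connected component contained two distinct points $x\ne y$, the clopen partition $X=U\cup V$ furnished by $(2)$ would restrict to a nontrivial partition $C=(C\cap U)\sqcup(C\cap V)$ into relatively open nonempty pieces, contradicting the connectedness of the component $C$; hence every component is a singleton.

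The implication $(4)\Rightarrow(3)$ is where I would use separability. A separable metric space is second countable, hence Lindel\"of. Given disjoint closed sets $E,F$, for each point I choose a basic clopen neighborhood disjoint from whichever of $E,F$ that point avoids (possible since $E\cap F=\emptyset$ and the clopen sets form a basis); this yields a clopen cover of $X$ in which every member misses $E$ or misses $F$. Extracting a countable subcover $\{W_n\}$ and disjointifying via $O_n=W_n\setminus\bigcup_{k<n}W_k$ produces a countable clopen partition of $X$ refining the cover, and any $O_n$ meeting $F$ must then miss $E$. Collecting those $O_n$ disjoint from $F$ into $U$ and the rest into $V$ gives complementary open sets $U=X\setminus V$, hence clopen, with $E\subseteq U$ and $F\subseteq V$, which is exactly $(3)$.

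The heart of the matter, and the step I expect to be the main obstacle, is $(1)\Rightarrow(4)$: upgrading ``components are singletons'' to the existence of a clopen basis. Here local compactness is essential, as the implication fails for general totally disconnected metric spaces. The key tool is the classical fact that \emph{in a compact Hausdorff space the connected component of a point coincides with its quasi-component} $Q(x)=\bigcap\{W:W\text{ clopen},\,x\in W\}$; one proves $Q(x)$ is connected by a normality-plus-compactness argument, so that total disconnectedness forces $Q(x)=\{x\}$ and the clopen sets containing $x$ shrink to $x$. To deploy this in the merely locally compact setting, given $x$ and an open $O\ni x$ I would choose open $O'$ with $x\in O'\subseteq\overline{O'}\subseteq O$ and $\overline{O'}$ compact. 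The subspace $\overline{O'}$ is compact Hausdorff and inherits total disconnectedness (any connected subset of $\overline{O'}$ is a connected subset of $X$, hence a singleton), so it has a clopen basis, yielding $W$ clopen in $\overline{O'}$ with $x\in W\subseteq O'$. The final bookkeeping is to verify that such a $W$ is clopen in all of $X$: it is closed in $X$ because $\overline{O'}$ is, and it is open in $X$ because $W\subseteq O'$ with $O'$ open. This furnishes arbitrarily small clopen neighborhoods of each point, i.e.\ a clopen basis, closing the cycle. The only delicate points are the quasi-component lemma and this transfer of clopenness from the compact neighborhood $\overline{O'}$ to $X$.
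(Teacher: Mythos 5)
Your proof is correct, and it is worth noting that the paper itself offers no proof of this theorem at all: it is stated with citations to Propositions 3.1.8 and 3.1.11 of Edgar's book and Theorem 29.7 of Willard (with a footnote reducing Willard's rim-compactness hypothesis to local compactness). What you have produced is a self-contained reconstruction of essentially the standard arguments sitting behind those citations. Your cycle $(1)\Rightarrow(4)\Rightarrow(3)\Rightarrow(2)\Rightarrow(1)$ isolates the content correctly: $(3)\Rightarrow(2)$ and $(2)\Rightarrow(1)$ are formal as you say; $(4)\Rightarrow(3)$ is the classical ``clopen basis plus Lindel\"of implies separation of closed sets by a clopen partition'' argument, and your disjointification $O_n=W_n\setminus\bigcup_{k<n}W_k$ of a countable clopen subcover works exactly as claimed (one checks, as you implicitly do, that if $x\in E$ then the $O_n$ containing $x$ meets $E$, hence by the dichotomy misses $F$, so $E\subseteq U$); separability enters only here, via second countability. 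The implication $(1)\Rightarrow(4)$ is indeed where local compactness is essential --- the implication fails in general, Erd\H{o}s space being the standard counterexample --- and your route through the quasi-component lemma on the compact Hausdorff set $\overline{O'}$ is precisely the core of Willard's Theorem 29.7. Two points would need expansion in a fully written-out version: the quasi-component lemma itself requires a proof or a precise citation, and the clopenness transfer is stated too tersely, since ``$W\subseteq O'$ with $O'$ open'' does not by itself make $W$ open in $X$; the correct spelling-out is that $W$ open in $\overline{O'}$ together with $W\subseteq O'\subseteq\overline{O'}$ gives $W=W\cap O'$ open in the subspace $O'$, hence open in $X$ because $O'$ is open in $X$. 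With those details supplied, your argument is a complete and correct proof, and in fact more informative to the reader than the paper's bare references.
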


In particular, the following proposition holds true \cite[p.91]{edgar2008measure}. 

\begin{proposition}\label{prop_totally-disconnect}
    A finite union of totally disconnected closed sets is totally disconnected. 
\end{proposition}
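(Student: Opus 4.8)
The plan is to avoid arguing directly from the definition that every connected component is a singleton, and instead to invoke the separation characterization of Theorem \ref{theorem_tD}. The reason is that connected subsets interact poorly with a decomposition $X = A_1 \cup \dots \cup A_n$: a connected subset of $X$ need not meet each $A_i$ in a connected set, so the component definition is awkward to use, whereas condition (3) of Theorem \ref{theorem_tD} localizes cleanly. Since every closed subset of a locally compact separable metric space (such as $\R^d$) is again locally compact separable metric, and a finite union of closed sets is closed, Theorem \ref{theorem_tD} applies simultaneously to each $A_i$ and to $X$; thus each of these sets is totally disconnected if and only if it has the separation property (3). By a routine induction it then suffices to treat two sets: writing $X = A_1 \cup (A_2 \cup \dots \cup A_n)$, setting $A = A_1$ and letting $B = A_2 \cup \dots \cup A_n$ be totally disconnected (induction hypothesis) and closed, the statement follows once I show that the union $A \cup B$ of two closed totally disconnected sets is totally disconnected. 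Concretely, I would fix disjoint closed sets $E, F \subseteq A \cup B$ and aim to produce a clopen $U$ with $E \subseteq U$ and $U \cap F = \emptyset$, so that $U$ and $V = X \setminus U$ verify condition (3).

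First I would separate inside $A$. The sets $E \cap A$ and $F \cap A$ are disjoint and closed in $A$, so condition (3) for $A$ supplies a set $G$, clopen in $A$, with $E \cap A \subseteq G$ and $G \cap F = \emptyset$ (the last equality because $G \subseteq A$). Since $A$ is closed in $X$, both $G$ and $A \setminus G$ are closed in $X$, so I can enlarge the data to the closed sets $E_1 = E \cup G$ and $F_1 = F \cup (A \setminus G)$. A short check confirms that $E_1$ and $F_1$ are still disjoint and that $E_1 \cup F_1 \supseteq G \cup (A \setminus G) = A$, whence $X \setminus (E_1 \cup F_1) \subseteq B$. The purpose of this step is that the part of $X$ not yet separated now lies entirely in $B$, and in particular every point of the overlap $A \cap B$ has already been assigned to one side.

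Then I would separate inside $B$: applying condition (3) for $B$ to the disjoint closed-in-$B$ sets $E_1 \cap B$ and $F_1 \cap B$ gives a set $H$, clopen in $B$, with $E_1 \cap B \subseteq H$ and $H \cap F_1 = \emptyset$. Setting $U = E_1 \cup H$ and $V = F_1 \cup (B \setminus H)$, I would verify that $U$ and $V$ are closed, that $U \cup V = E_1 \cup F_1 \cup B = X$ (using $X \setminus (E_1 \cup F_1) \subseteq B$), and that $U \cap V = \emptyset$ via a four-term intersection expansion in which each term vanishes by construction. Then $U$ is clopen, $E \subseteq U$, and $U \cap F = \emptyset$, which is condition (3) for $A \cup B$. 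I expect the only genuinely delicate point to be the consistency of the two separations on the overlap $A \cap B$, but this turns out to be automatic: each point of $A \cap B$ is placed in $E_1$ or $F_1$ by the partition of $A$, and the disjointness $E_1 \cap F_1 = \emptyset$ established in the previous step is exactly what allows $E_1 \cap B$ and $F_1 \cap B$ to be separated in $B$. Thus the main obstacle is conceptual rather than computational, namely recognizing that the separation characterization, not the component definition, is the right tool, after which the gluing is bookkeeping.
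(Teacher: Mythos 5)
Your proof is correct, but there is nothing in the paper to compare it against: the authors do not prove Proposition \ref{prop_totally-disconnect} at all, quoting it directly from \cite{edgar2008measure} (p.~91). Your argument --- reduce to two sets by induction, then verify the separation property (3) of Theorem \ref{theorem_tD} for $A\cup B$ by first producing a clopen (in $A$) set $G$ splitting $E\cap A$ from $F\cap A$, enlarging to $E_1=E\cup G$ and $F_1=F\cup(A\setminus G)$ so that the not-yet-separated remainder $X\setminus(E_1\cup F_1)$ lies in $B$, and then separating $E_1\cap B$ from $F_1\cap B$ inside $B$ --- is the classical ``addition theorem'' argument for zero-dimensionality, and the bookkeeping checks out: $E_1\cap F_1=\varnothing$ because $E\cap F=\varnothing$, $E\cap A\subseteq G$, and $G\cap F=\varnothing$; and with $U=E_1\cup H$, $V=F_1\cup(B\setminus H)$, each of the four terms in the expansion of $U\cap V$ vanishes by construction, while $U\cup V=E_1\cup F_1\cup B=X$, so $U,V$ are disjoint closed sets covering $X$, hence clopen, which is exactly condition (3) for $A\cup B$. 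Two points are worth keeping explicit in a final write-up. First, Theorem \ref{theorem_tD} must apply simultaneously to $A$, $B$, and $A\cup B$; you justify this correctly via heredity of local compactness and separable metrizability to closed subspaces of $\R^d$, and this is precisely where the closedness hypothesis genuinely enters --- consistent with the rationals/irrationals counterexample the paper mentions right after the proposition, where the pieces are not closed. Second, in the induction step one should note that $B=A_2\cup\cdots\cup A_n$ is itself closed (a finite union of closed sets), so the induction hypothesis applies; you do note this. Compared with a direct component-based argument (showing a nondegenerate connected subset of the union leads to a contradiction), your route through the separation characterization is cleaner, and it is very likely close in spirit to the proof in Edgar's book that the paper cites.
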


We note that closedness cannot be removed as the rationals and irrationals are both totally disconnected, while their union is not. We now give the following sufficient condition for total disconnectedness in fractals generated by an IFS. 

\begin{proposition} \label{0-diam-total-disconn}
    Let $\Phi = \{\phi_1, \ldots, \phi_N\}$ be an IFS. Let $C$ be a closed set such that 
    \[C \supset \bigcup_{i=1}^N \phi_i (C).\]
    For any level $k\ge 1$ we let
    \[\bigcup_{\sigma \in \Sigma^k} \phi_\sigma (C) = R_{1, k} \cup \cdots \cup R_{m_k, k}, \]
    where $R_{j, k}$ denotes the $j$th connected component of the $k$th iteration. Then, $C$ is totally disconnected if 
    \[\lim_{k \to \infty} \max_{1 \leq j \leq m_k} \mbox{diam} (R_{j, k}) = 0. \]
\end{proposition}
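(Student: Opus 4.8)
The plan is to separate the purely topological content of the hypothesis from the iterated–function–system structure, and then apply the former to the limit set produced by the latter. First I would record the nesting forced by the standing assumption: writing $E_k:=\bigcup_{\sigma\in\Sigma^k}\phi_\sigma(C)$, the inclusion $C\supseteq\bigcup_{i=1}^N\phi_i(C)=E_1$ propagates under the maps, since applying each $\phi_i$ to $E_k\subseteq E_{k-1}$ and taking the union over $i$ gives $E_{k+1}\subseteq E_k$; thus
\[
C\supseteq E_1\supseteq E_2\supseteq\cdots .
\]
Each $E_k$ is closed (a finite union of homeomorphic images of the closed set $C$), so the set actually pinned down by the diameter condition is the closed limit set
\[
K:=\bigcap_{k\ge 1}E_k\subseteq C,
\]
namely the self-affine fractal generated by $\Phi$ inside $C$. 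It is the total disconnectedness of this fractal that the diameter hypothesis delivers, and $K$ is the object to which the proposition is applied in the paper.

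The key step is the elementary principle that a connected subset of any set lies entirely inside a single connected component of that set. Granting this, I would fix a point $x\in K$ and let $A_x$ denote its connected component in $K$. For every $k$ we have $K\subseteq E_k$, so $A_x$ is a connected subset of $E_k$ and therefore is contained in exactly one of the components $R_{j(k),k}$ appearing in $E_k=R_{1,k}\cup\cdots\cup R_{m_k,k}$. This forces
\[
\mbox{diam}(A_x)\le\mbox{diam}\bigl(R_{j(k),k}\bigr)\le\max_{1\le j\le m_k}\mbox{diam}(R_{j,k}),
\]
and the right-hand side tends to $0$ as $k\to\infty$ by assumption. Hence $\mbox{diam}(A_x)=0$, i.e. $A_x=\{x\}$, and since $x\in K$ was arbitrary every connected component of $K$ is a singleton, which is exactly total disconnectedness in the sense of item (1) of Theorem \ref{theorem_tD}. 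In particular, in the regime where $C$ coincides with the generated fractal, so that $C=K$ and hence $C\subseteq E_k$ for every $k$, the identical chain of inequalities applies to the component of any $x\in C$ and yields total disconnectedness of $C$ directly.

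Two points deserve attention, though neither is deep. First, the finite decomposition $E_k=R_{1,k}\cup\cdots\cup R_{m_k,k}$ is legitimate as soon as each $\phi_\sigma(C)$ is connected (for instance when $C$ is connected), for then $E_k$ is a union of at most $N^k$ connected pieces and so has finitely many components; crucially, however, the argument never invokes finiteness of $m_k$, relying only on the component principle, so the conclusion survives even if $E_k$ has infinitely many components. Second, to guarantee that $K$ is a genuine nonempty compact fractal one uses compactness of $C$: then each $E_k$ is compact and nonempty and the nested intersection $K$ is nonempty and compact. I expect the one genuinely conceptual point a reader must internalize is the passage from the iterates to the limit: the diameter hypothesis constrains only the points surviving in every $E_k$, namely the fractal $K\subseteq C$, and it is the components of this fractal, shrinking along the iteration, that collapse to points. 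No metric estimate beyond the given limit is required, so once the nesting and the component principle are in place the proof is immediate.
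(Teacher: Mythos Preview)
Your argument is correct and follows essentially the same route as the paper's proof: both observe that any connected subset of the attractor must sit inside a single component $R_{j,k}$ at every level $k$, so the diameter hypothesis collapses it to a point (the paper phrases this as ``two distinct points land in different components at a deep enough level,'' which is the contrapositive of your diameter bound on $A_x$). You are also right to flag that the conclusion really concerns the attractor $K=\bigcap_k E_k$ rather than $C$; the paper's own proof silently works with $K$ as well.
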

\begin{proof}
    Let $x, y \in K$ be distinct. Then $|x - y| > 0$. Choose the iteration $k$ where $\mbox{diam} (R_{j, k}) < |x - y|$ for all $1 \leq j \leq m_k$. Then $x$ and $y$ belong to different connected components of the $k$th level. So the connected components in the $k$th stage provides the separation. 
\end{proof}

We remark that another useful criterion for checking total disconnectedness specifically for fractal squares can be found in \cite{roinestad2010geometry} (see also \cite{LLR2013}).

\subsection{Notions about convex geometry}    We will use some notions from convex geometry. We can see \cite{gruber2007convex} for details. A hyperplane $E$ is the level set of a linear functional i.e. $E = \{x: f(x) = \alpha\}$ for some linear functional $f: \R^d\to \R$. Let $A \subset \R^d$ be closed and convex. Then $E$ is called a \textbf{supporting hyperplane} of $A$ if $A \cap E \not = \varnothing$ and $A$ is contained in exactly one of the two closed halfspaces $\{f \leq \alpha\}$ or $\{f \geq \alpha\}$.

\medskip

A {\bf convex polytope} $C$ is a closed convex set that is a convex hull of finitely many points. It is also known that a convex polytope also admits a half-space representation i.e.
$$
C = \bigcap_{i=1}^M {\mathcal H}_i
$$
where ${\mathcal H}_i$ is a closed half-space for some linear functional $f$. Recall that a point $x$ is called an \textbf{extreme point} of a convex set $C$ if $x = \lambda y+(1-\lambda)z$ for some $\lambda\in [0,1]$ and $y,z\in C$ implies that $y = z$.  A point $x$ is called an \textbf{exposed point} of a convex set $C$ if  there exists a supporting hyperplane $H$ such that $H\cap C = \{x\}$.  We would like to note that all exposed points are extreme points, but the converse is not necessarily true, see the track and field example \cite[p.75]{gruber2007convex}.

\medskip

\section{Classification theorems}

In this section we will provide a classification for which self-affine fractals in $\R^d$ project very thick shadows in every $k$-dimensional subspace, where $1 \leq k < d$. Suppose $K$ is a self-affine fractal whose IFS consists of the generating contractions
\[\{ \phi_1, \, \ldots, \, \phi_N \}.\]
Let $C = C_K$ represent the convex hull of $K$. Let $\mathcal{L}_{k} (K)$ be the set of all $d-k$ dimensional affine subspaces that intersect C. When we say {\it an affine subspace $W$} of dimension $d-k$, we mean that $W$ is a translated $d-k$ dimensional subspace, i.e.  $W = V+x$ for some $V\in G(d,d-k)$ and $x\in\R^d$.  

\medskip

The following theorem provides an equivalence by only checking the first level iteration of self-affine sets. It was first used by Mendivil and Taylor in \cite{mendivil2008thin} in their special case. It has also been mentioned for the case of $\R^2$ without proof by \cite{falconer2013visible} and \cite{farkas2020interval}. There appears to be no explicit formal proof written down. Although the proof is not too difficult to experts in fractal geometry, we provide the proof here for the sake of completeness.

\begin{theorem} \label{classification theorem1}
Using the above notations, the following are equivalent.
\begin{enumerate}
    \item For all  $W \in \mathcal{L}_k(K)$, there exists some $i \in \{1, \, \ldots, \, N\}$ such that $W \cap \phi_i(C) \not = \varnothing$.
    \item $K$ projects very thick shadows in every $k$-dimensional subspace.
    \end{enumerate}
    \end{theorem}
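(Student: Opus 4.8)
The plan is to first translate the very thick shadow condition (2) into an equivalent fiber statement and then prove that reformulation is equivalent to (1). Fix $U \in G(d,k)$. Since $K \subset C$ gives $\pi_U(K) \subset \pi_U(C)$ automatically, the equality $\pi_U(K) = \pi_U(C)$ is equivalent to demanding that every $w \in \pi_U(C)$ be hit by $K$, i.e. $K \cap \pi_U^{-1}(w) \neq \varnothing$. The fiber $\pi_U^{-1}(w) = w + U^{\perp}$ is a $(d-k)$-dimensional affine subspace, and $w \in \pi_U(C)$ holds exactly when $(w + U^{\perp}) \cap C \neq \varnothing$. As $U$ ranges over $G(d,k)$ its orthogonal complement $U^{\perp}$ ranges over all of $G(d,d-k)$, and for fixed $U$ the fibers $w + U^{\perp}$ (as $w$ ranges over $\pi_U(C)$) sweep out precisely the $(d-k)$-dimensional affine subspaces with direction $U^{\perp}$ that meet $C$. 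Hence (2) is equivalent to the clean statement that \emph{for every $W \in \mathcal{L}_k(K)$ one has $W \cap K \neq \varnothing$}; I will call this (2$'$) and record it as the first step.

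The implication (2$'$) $\Rightarrow$ (1) is immediate: if $W$ meets $K$ at a point $x$, then writing $K = \bigcup_i \phi_i(K) \subset \bigcup_i \phi_i(C)$ shows $x \in \phi_i(C)$ for some $i$, so $W \cap \phi_i(C) \neq \varnothing$.

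The substance is the converse (1) $\Rightarrow$ (2$'$), handled by an iterative pullback. Before starting I would record two preliminary facts. First, since $C = \conv(K)$ and each $\phi_i$ is affine, $\phi_i(C) = \conv(\phi_i(K)) \subset \conv(K) = C$, so refining a word shrinks its cylinder, i.e. $\phi_{\sigma i}(C) \subset \phi_\sigma(C)$. Second, each $\phi_i$ is an invertible affine map and therefore carries $(d-k)$-dimensional affine subspaces to $(d-k)$-dimensional affine subspaces, so $\phi_i^{-1}(W)$ is again $(d-k)$-dimensional. Now fix $W \in \mathcal{L}_k(K)$. By (1) there is $i_1$ with $W \cap \phi_{i_1}(C) \neq \varnothing$, equivalently $\phi_{i_1}^{-1}(W) \cap C \neq \varnothing$; hence $\phi_{i_1}^{-1}(W) \in \mathcal{L}_k(K)$ and (1) applies again to produce $i_2$ with $\phi_{i_1}^{-1}(W) \cap \phi_{i_2}(C) \neq \varnothing$, i.e. $W \cap \phi_{i_1}\phi_{i_2}(C) \neq \varnothing$. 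Iterating yields an infinite word $(i_1, i_2, \ldots)$ with $W \cap \phi_{i_1 \cdots i_n}(C) \neq \varnothing$ for every $n$. The sets $K_n := \phi_{i_1 \cdots i_n}(C)$ form a nested decreasing sequence of nonempty compact sets with $\mathrm{diam}(K_n) \leq (\max_i \|T_i\|)^n \, \mathrm{diam}(C) \to 0$, so $\bigcap_n K_n$ is a single point $p$; since $\phi_{i_1 \cdots i_n}(K) \subset K_n$ and $K$ is the attractor, $p$ is the coding point of the word and lies in $K$. Choosing $x_n \in W \cap K_n$ gives $|x_n - p| \leq \mathrm{diam}(K_n) \to 0$, so $x_n \to p$; as $W$ is closed, $p \in W$. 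Thus $p \in W \cap K$, proving (2$'$).

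The hard part is the converse direction, and within it the key points are organizing the pullback so that each stage genuinely stays a $(d-k)$-dimensional member of $\mathcal{L}_k(K)$ (this is exactly where invertibility of the $T_i$ is used) and then upgrading ``$W$ meets every finite cylinder $\phi_{i_1 \cdots i_n}(C)$'' to ``$W$ meets the attractor $K$'' via the nested compact sets together with vanishing diameters. I would take care to invoke $C = \conv(K)$ for the nesting $\phi_i(C) \subset C$, since that is precisely what forces the cylinders to decrease.
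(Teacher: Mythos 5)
Your proposal is correct and follows essentially the same route as the paper: the easy direction via the covering $K \subset \bigcup_i \phi_i(C)$, and the hard direction via the same iterative pullback $\phi_{\sigma}^{-1}(W) \in \mathcal{L}_k(K)$ producing a word whose cylinders all meet $W$, concluded by vanishing diameters. The only differences are cosmetic: you make the fiber reformulation (2$'$) explicit where the paper leaves it implicit, and you extract the limit point via nested cylinders and the coding point rather than the paper's subsequence argument with $\mathrm{dist}(x_n, W) \le \mathrm{diam}(\phi_{\sigma_n}(C)) \to 0$.
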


\begin{proof}
  \textbf{($\Longrightarrow$):} It suffices to claim that for all $W\in{\mathcal L}_k(K)$ and for all $n \in \N$, there exists some ${\sigma_n}\in \Sigma^n$ such that $\phi_{\sigma_n}(C) \cap W \not = \varnothing$. Indeed, we can take $x_n\in \phi_{\sigma_n}(K)\subset \phi_{\sigma_n}(C)$. By passing into subsequence if necessary, we can assume $x_n\to x\in K$ and we have
    $$
    \text{dist}(x_{n}, \, W) \leq  \text{diam}(\phi_{\sigma_n}(C))\to0
    $$
    as $n\to \infty$. Hence, $x\in W.$  As all $d-k$ dimensional affine subspaces hits $K$, the projection of $K$ onto any $k$ dimensional subspace must be equal to that of $C$. This will prove (2). 

\medskip

    We now justify our claim by induction. Fix $W \in \mathcal{L}_k(K)$, By our assumption (1), $\ell \cap \phi_i(C) \not = \varnothing$ for some $i \in \{1, \, 2, \, \ldots, \, N\}$. Taking $\sigma_1 = i$, we proved the case $n=1$. 
    For an inductive hypothesis, assume that at the $n$th iteration, there exists some map $\phi_{\sigma_n}$ such that $\ell \cap \phi_{\sigma_n}(C) \not = \varnothing$. 
    Because the IFS is self-affine, the $n$th level map, $\phi_{\sigma_n}$, from the hypothesis must have an inverse, $\phi_{\sigma_n}^{-1}$.    From our inductive hypothesis,  we have $\phi_{\sigma_n}(C) \cap W \not = \varnothing.$ It  implies that  $C \cap \phi_{\sigma_n}^{-1}(W) \not = \varnothing$.
    Notice that  $\phi_{\sigma_n}^{-1}(W)$ is still a $(d-k)$ dimensional affine subspace, so the above implies that $ \phi_{\sigma_n}^{-1}(W)\in \mathcal{L}_k(K)$. So, by our assumption, there exists some $i \in \{1, \, \ldots, \, N\}$ such that 
        \[ \phi_i(C) \cap \phi_{\sigma_n}^{-1}(W) \not = \varnothing.\]
    Applying the forward map $\phi_{\sigma_n}$ to the above, we have
    \begin{align*}
        \phi_{\sigma_n}(\phi_i(C)) \cap W &\not =  \varnothing.
    \end{align*}
Hence, for $\phi_{\sigma_{n+1}} = \phi_{\sigma_n} \circ \phi_i$, we have  $ \phi_{\sigma_{n+1}}(C)\cap W\ne \varnothing$. So we have shown that $(1) \implies (2)$ holds.

\medskip

    \textbf{($\Longleftarrow$): } 
    Suppose that the self-affine fractal $K$ has a very thick shadow in every $k$-dimensional subspace. Then we know that $\pi_V(K) = \pi_V (C)$ for all $V\in G(d,k)$.
    Fix $W \in \mathcal{L}_k(K)$. Take $V$ to be the $k$-dimensional subspace orthogonal to $W$. As $W$ passes through $C$, we can find $z\in W\cap C$. Let  $y$ be the orthogonal projection of $z$ onto $V$, then we have that $y\in V\cap \pi_V(C)$. Moreover, $\pi_V^{-1}(\{y\}) = W \cap C$.  By our assumption that $\pi_V(C) = \pi_V(K)$, there exists $x\in K$ such that $\pi_V(x) = y$. Hence, $x\in W$. Recall that 
        \[K \subset \bigcup_{i=1}^N \phi_i(C). \]
    Therefore, for some $i \in \{1, \, \ldots, \, N\}$, we have $x \in \phi_i(C)$. In particular, this shows that $\phi_i(C)\cap W\neq \varnothing.$  
\end{proof}

\medskip

 Checking condition (2) in Theorem \ref{classification theorem1} may still be a tough computational task. The following theorem gives a third equivalent condition when $k = 1$ using the convex hulls of subsets of the connected components of the first iteration of the generating IFS. This third condition is a new contribution, to the best of our knowledge. Moreover, as we will see, this will provide a much more effective checking criterion for self-affine fractals. 

\medskip    

To state the theorem, we now decompose the image of the first iteration of the convex hull into the disjoint union of $r$ connected components, denoted by $R_j$ where $j \in \{1, \, \ldots, \, r\}$. That is,    
\[\bigcup_{i=1}^N \phi_i(C) = R_1 \cup \cdots \cup R_r.   \]
We are now ready to state our main theorem. 

\begin{theorem} \label{classification theorem2} Using the  prior notations, the following are equivalent:
\begin{enumerate}

    \item $K$ has a very thick shadow in every 1-dimensional subspace. 

    \item For all proper indexing sets $I \subset \{1, \, \ldots, \, r\}$,
    \[\conv\left( \bigcup_{i \in I} R_i \right) \cap \; \conv\left( \bigcup_{i \in I^c} R_i \right) \not = \varnothing. \]
    
\end{enumerate}

\end{theorem}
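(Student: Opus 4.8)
The plan is to reduce immediately to Theorem \ref{classification theorem1} specialized to $k=1$: the elements of $\mathcal{L}_1(K)$ are exactly the affine hyperplanes meeting $C$, so by that theorem condition (1) is equivalent to the geometric statement that \emph{every hyperplane $W$ meeting $C$ also meets some $\phi_i(C)$}. It therefore suffices to show this hyperplane condition is equivalent to condition (2), which I would do by proving both contrapositives. Before starting, I would record two facts. First, $C = \conv(R_1 \cup \cdots \cup R_r)$: the inclusion $\bigcup_j R_j \subset C$ follows from the self-affine containment $\phi_i(C) = \conv(\phi_i(K)) \subset \conv(K) = C$, while $C = \conv(K) \subset \conv\big(\bigcup_j R_j\big)$ follows from $K \subset \bigcup_i \phi_i(K) \subset \bigcup_j R_j$. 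Second, each $R_j$ is a connected component of the compact set $\bigcup_i \phi_i(C)$, hence compact, so the convex hull of any union of the $R_j$ is compact and strict separation of disjoint convex sets applies.

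For the direction ``not (2) $\Rightarrow$ hyperplane condition fails,'' suppose some proper $I$ makes $\conv\big(\bigcup_{i\in I} R_i\big)$ and $\conv\big(\bigcup_{i\in I^c} R_i\big)$ disjoint. Since both are compact and convex, the separating hyperplane theorem gives a hyperplane $W$ with these hulls lying in opposite open half-spaces $H^+$ and $H^-$. Then every $R_i$ with $i\in I$ lies in $H^+$ and every $R_i$ with $i\in I^c$ lies in $H^-$, so $W$ avoids all $R_j$, hence all $\phi_i(C)$. Because $I$ is proper, $C = \conv\big(\bigcup_j R_j\big)$ contains points strictly on both sides of $W$, and by convexity the segment joining two such points crosses $W$; thus $W \cap C \neq \varnothing$. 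This produces a hyperplane in $\mathcal{L}_1(K)$ meeting no $\phi_i(C)$, contradicting the hyperplane condition.

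For the reverse direction, suppose some hyperplane $W$ meets $C$ but no $\phi_i(C)$, and hence no $R_j$. Each $R_j$ is connected and disjoint from $W$, so it lies entirely in one open half-space; set $I = \{ j : R_j \subset H^+\}$. If $I$ were empty or all of $\{1,\ldots,r\}$, all $R_j$ would lie in a single convex open half-space, forcing $C = \conv\big(\bigcup_j R_j\big)$ into that half-space and contradicting $W \cap C \neq \varnothing$; hence $I$ is proper. The two convex hulls then lie in $H^+$ and $H^-$ respectively and are disjoint, so condition (2) fails.

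The main obstacle, and the only genuinely nonroutine point, is guaranteeing that the separating hyperplane actually belongs to $\mathcal{L}_1(K)$ (i.e.\ truly meets $C$) and that the induced index set is proper. Both hinge on the identity $C = \conv\big(\bigcup_j R_j\big)$ combined with the elementary convexity principle that a convex set having points strictly on both sides of a hyperplane must intersect it; everything else reduces to the standard separation theorem and the fact that a connected set disjoint from a hyperplane lies on one side.
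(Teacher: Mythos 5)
Your proof is correct, but it routes \emph{both} directions through Theorem \ref{classification theorem1}, whereas the paper only does this for one of them. For (2) $\Rightarrow$ (1) your argument and the paper's coincide: a hyperplane in $\mathcal{L}_1(K)$ missing every $\phi_i(C)$ misses every component $R_j$, each connected $R_j$ then lies in one open half-space, and the resulting proper index set violates (2). For (1) $\Rightarrow$ (2), however, the paper never invokes Theorem \ref{classification theorem1}: it separates the two hulls by a hyperplane with normal $n$, projects onto $W = \mathrm{span}(n)$, and notes that $\pi_W(K)$ would lie in two disjoint closed intervals while meeting both (since $K$ meets every $R_j$), so it could not equal the interval $\pi_W(C)$ --- contradicting the very-thick-shadow hypothesis directly. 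Your alternative instead upgrades the separating hyperplane to a member of $\mathcal{L}_1(K)$, which obliges you to prove $C = \conv\left(R_1 \cup \cdots \cup R_r\right)$ and to use properness of $I$ to force $W \cap C \neq \varnothing$ via the segment-crossing argument; you correctly flagged this as the one nonroutine point, and your verification (affine maps commute with convex hulls; the $R_j$ are compact, so their hulls are compact and strict separation applies) is sound. What each buys: your version makes Theorem \ref{classification theorem2} a purely convex-geometric reformulation of the hyperplane condition of Theorem \ref{classification theorem1}, handled uniformly in both directions, and is slightly more careful about separation than the paper (which asserts strict separation without noting the compactness that justifies it); the paper's projection argument for the forward direction avoids the $W \cap C \neq \varnothing$ check entirely and exhibits concretely, via $\pi_W$, why disconnectedness of the shadow is the obstruction. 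Both proofs are complete.
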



\begin{proof}
  
    \textbf{($\Longrightarrow$):} 
    Assume $K$ has a very thick shadow  in every 1-dimensional subspace. Then for all $W\in G(2,1)$, we have $\pi_W(K) = \pi_W (C)$is an interval, where $\conv(K) = C$. Suppose for a contradiction, there exists some proper indexing set $I \subset \{1, \, \ldots, \, r\}$ such that 
        \[\conv \left( \bigcup_{i \in I} R_i \right) \cap \conv \left( \bigcup_{i \in I^c} R_i \right) = \varnothing.\]
    For ease, we will let $A$ and $B$ denote the above sets respectively. 
    By the hyperplane separation theorem, the convex nonempty sets $A$ and $B$ can be properly separated by some  hyperplane  $H = \{x: \langle n,x\rangle = c\}$. This implies that $A\subset H^- = \{x: \langle n,x\rangle < c\}$ and $B\subset H^+ = \{x: \langle n,x\rangle > c\}$. Let $W$ be the 1-dimensional subspace spanned by $n$. If we project $A$ and $B$ onto $W$, then $\pi_W(A)\cap \pi_W(B) = \varnothing$. Thus, $\pi_W(A)\cup\pi_W(B)$ is a union of two disjoint closed intervals. But $K$ is a subset of $\bigcup_{i=1}^r R_i$, 
    implying that 
    $$
    \pi_W(K)\subset \pi_W(A)\cup \pi_W(B).
    $$ 
    As $I$ is proper, $K$ intersects both $A$ and $B$. 
    Hence, $\pi_W(K)$ cannot be an interval, contradicting our assumption (1). 
       

\medskip

    \textbf{($\Longleftarrow$)}
    Assume that for any indexing set $I \subset \{1, \, 2, \, \ldots, \, r\}$, we have that 
        \[\conv \left( \bigcup_{i \in I} R_i \right) \cap \; \conv \left( \bigcup_{i \in I^c} R_i \right) \not = \varnothing.\]
    For a contradiction, suppose that the condition of very thick shadows fails. Bythe contrapositive of Theorem \ref{classification theorem1}, there exists some hyperplane $H \in \mathcal{L}_1(K)$ such that for all $i \in \{1, \, \ldots, \, N\}$, we have $H \cap \phi_i(C) = \varnothing.$
    Note that each $\phi_i(C)$ is connected. The definition of connected components implies that 
        \[R_j = \bigcup_{\{i: \phi_i(C) \subset R_j\}} \phi_i(C). \]
    We see that $H$ cannot intersect any connected component $R_1, \ldots, R_r$.
    So, there must exist some indexing set $I \subset \{1, \, \ldots, \, r\}$ such that
        \[\bigcup_{i \in I} R_i \subset H^+ \; \text{ and } \; \bigcup_{i \in I^c} R_i \subset H^- \]
        where $H^+,H^-$ are the upper and lower half plane determined by $H$, Moreover, $I\neq \varnothing$ because  if all $R_i\in H^{+}$ or $H^-$, then $H$ will not intersect $C$. 
    Since the convex hull of a set is the smallest convex set containing the original set and $H^{\pm}$ are convex, we see that 
        \[ \conv \left( \bigcup_{i \in I} R_i \right) \subset H^+ \
    \mbox{and}
        \  \subset \conv \left( \bigcup_{i \in I^c} R_i \right) \subset H^-.\]
    Therefore, there does exist some proper indexing set $I$ such that 
        \[\conv \left( \bigcup_{i \in I} R_i \right) \cap \conv \left( \bigcup_{i \in I^c} R_i \right) = \varnothing.\]
    This contradicts our original assumption. The proof is complete. 
\end{proof}

\medskip

\section{Fractal Decomposability for cubes}

Let us first revisit some known examples in literature and provide a full characterization, for those fractals, when they have very thick shadows in all 1-dimensional subspaces. All these examples were on the unit square.  In the last example, we provide a new construction which works in higher dimension that shows the fractal decomposability for the unit cubes in $\R^d$.

\medskip

\underline{\it 1. Mendivil-Taylor self-affine fractals} 

\medskip

Mendivil and Taylor first discovered a self-affine fractal whose convex hull is the unit square such that it has very thick shadows in every projection. Let $0<t<\frac12<s<1$ and $s+t<1$, the contractive maps are defined by 
$$
\phi_1(x,y) = (tx,sy), \ \phi_2(x,y) = (sx, ty+(1-t)), 
$$
 $$
 \phi_3(x,y) = (sx+(1-s),ty), \ \phi_4(x,y) = (tx+(1-x), sy+(1-s))
 $$
 They obtained a sufficient condition on $t,s$ such that the corresponding invariant set projects very thick shadows in every direction. We now let $R_i = \phi_i([0,1]^2)$, that is, $R_i$ is the rectangle that is the image of $\phi_i$ applied on the unit square. In the following argument, we prove that the condition given by Mendivil and Taylor is a complete description of $t,s$ for the everywhere very thick shadow condition. 

\begin{example}
Let $0<t<\frac12<s<1$ and $s+t<1$. Then the Mendivil-Taylor self-affine set has very thick shadows in all directions if and only if 
$$
 \frac{1-\sqrt{2s-1}}{2}\le t.
$$

\end{example}
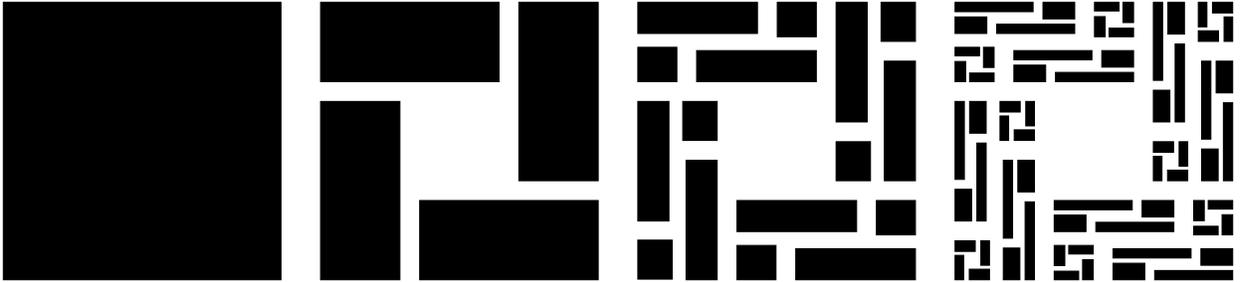
\begin{figure}[h]
\begin{tikzpicture}[x=0.75pt,y=0.75pt,yscale=-1,xscale=1]

\draw  [fill={rgb, 255:red, 0; green, 0; blue, 0 }  ,fill opacity=1 ] (10,50) -- (150,50) -- (150,190) -- (10,190) -- cycle ;
\draw  [fill={rgb, 255:red, 0; green, 0; blue, 0 }  ,fill opacity=1 ] (310,50) -- (310,140) -- (270,140) -- (270,50) -- cycle ;
\draw  [fill={rgb, 255:red, 0; green, 0; blue, 0 }  ,fill opacity=1 ] (210,100) -- (210,190) -- (170,190) -- (170,100) -- cycle ;
\draw  [fill={rgb, 255:red, 0; green, 0; blue, 0 }  ,fill opacity=1 ] (220,150) -- (310,150) -- (310,190) -- (220,190) -- cycle ;
\draw  [fill={rgb, 255:red, 0; green, 0; blue, 0 }  ,fill opacity=1 ] (170,50) -- (260,50) -- (260,90) -- (170,90) -- cycle ;
\draw  [fill={rgb, 255:red, 0; green, 0; blue, 0 }  ,fill opacity=1 ] (330,50) -- (390.33,50) -- (390.33,65.67) -- (330,65.67) -- cycle ;
\draw  [fill={rgb, 255:red, 0; green, 0; blue, 0 }  ,fill opacity=1 ] (359.67,74.33) -- (420,74.33) -- (420,90) -- (359.67,90) -- cycle ;
\draw  [fill={rgb, 255:red, 0; green, 0; blue, 0 }  ,fill opacity=1 ] (409.67,174.33) -- (470,174.33) -- (470,190) -- (409.67,190) -- cycle ;
\draw  [fill={rgb, 255:red, 0; green, 0; blue, 0 }  ,fill opacity=1 ] (380,150) -- (440.33,150) -- (440.33,165.67) -- (380,165.67) -- cycle ;
\draw  [fill={rgb, 255:red, 0; green, 0; blue, 0 }  ,fill opacity=1 ] (445.67,50) -- (445.67,110.33) -- (430,110.33) -- (430,50) -- cycle ;
\draw  [fill={rgb, 255:red, 0; green, 0; blue, 0 }  ,fill opacity=1 ] (470,79.67) -- (470,140) -- (454.33,140) -- (454.33,79.67) -- cycle ;
\draw  [fill={rgb, 255:red, 0; green, 0; blue, 0 }  ,fill opacity=1 ] (370,129.67) -- (370,190) -- (354.33,190) -- (354.33,129.67) -- cycle ;
\draw  [fill={rgb, 255:red, 0; green, 0; blue, 0 }  ,fill opacity=1 ] (345.67,100) -- (345.67,160.33) -- (330,160.33) -- (330,100) -- cycle ;
\draw  [fill={rgb, 255:red, 0; green, 0; blue, 0 }  ,fill opacity=1 ] (400.33,50) -- (420,50) -- (420,67.33) -- (400.33,67.33) -- cycle ;
\draw  [fill={rgb, 255:red, 0; green, 0; blue, 0 }  ,fill opacity=1 ] (380,172.67) -- (399.67,172.67) -- (399.67,190) -- (380,190) -- cycle ;
\draw  [fill={rgb, 255:red, 0; green, 0; blue, 0 }  ,fill opacity=1 ] (450.33,150) -- (470,150) -- (470,167.33) -- (450.33,167.33) -- cycle ;
\draw  [fill={rgb, 255:red, 0; green, 0; blue, 0 }  ,fill opacity=1 ] (330,72.67) -- (349.67,72.67) -- (349.67,90) -- (330,90) -- cycle ;
\draw  [fill={rgb, 255:red, 0; green, 0; blue, 0 }  ,fill opacity=1 ] (470,50) -- (470,69.67) -- (452.67,69.67) -- (452.67,50) -- cycle ;
\draw  [fill={rgb, 255:red, 0; green, 0; blue, 0 }  ,fill opacity=1 ] (347.33,170) -- (347.33,189.67) -- (330,189.67) -- (330,170) -- cycle ;
\draw  [fill={rgb, 255:red, 0; green, 0; blue, 0 }  ,fill opacity=1 ] (370,100) -- (370,119.67) -- (352.67,119.67) -- (352.67,100) -- cycle ;
\draw  [fill={rgb, 255:red, 0; green, 0; blue, 0 }  ,fill opacity=1 ] (447.33,120.33) -- (447.33,140) -- (430,140) -- (430,120.33) -- cycle ;
\draw  [fill={rgb, 255:red, 0; green, 0; blue, 0 }  ,fill opacity=1 ] (490,50) -- (529.33,50) -- (529.33,54.67) -- (490,54.67) -- cycle ;
\draw  [fill={rgb, 255:red, 0; green, 0; blue, 0 }  ,fill opacity=1 ] (511,61) -- (550.33,61) -- (550.33,65.67) -- (511,65.67) -- cycle ;
\draw  [fill={rgb, 255:red, 0; green, 0; blue, 0 }  ,fill opacity=1 ] (519.67,74.33) -- (559,74.33) -- (559,79) -- (519.67,79) -- cycle ;
\draw  [fill={rgb, 255:red, 0; green, 0; blue, 0 }  ,fill opacity=1 ] (590.67,185.33) -- (630,185.33) -- (630,190) -- (590.67,190) -- cycle ;
\draw  [fill={rgb, 255:red, 0; green, 0; blue, 0 }  ,fill opacity=1 ] (569.67,174.33) -- (609,174.33) -- (609,179) -- (569.67,179) -- cycle ;
\draw  [fill={rgb, 255:red, 0; green, 0; blue, 0 }  ,fill opacity=1 ] (540,150) -- (579.33,150) -- (579.33,154.67) -- (540,154.67) -- cycle ;
\draw  [fill={rgb, 255:red, 0; green, 0; blue, 0 }  ,fill opacity=1 ] (561,161) -- (600.33,161) -- (600.33,165.67) -- (561,165.67) -- cycle ;
\draw  [fill={rgb, 255:red, 0; green, 0; blue, 0 }  ,fill opacity=1 ] (540.67,85.33) -- (580,85.33) -- (580,90) -- (540.67,90) -- cycle ;
\draw  [fill={rgb, 255:red, 0; green, 0; blue, 0 }  ,fill opacity=1 ] (494.67,100) -- (494.67,139.33) -- (490,139.33) -- (490,100) -- cycle ;
\draw  [fill={rgb, 255:red, 0; green, 0; blue, 0 }  ,fill opacity=1 ] (519,129.67) -- (519,169) -- (514.33,169) -- (514.33,129.67) -- cycle ;
\draw  [fill={rgb, 255:red, 0; green, 0; blue, 0 }  ,fill opacity=1 ] (505.67,121) -- (505.67,160.33) -- (501,160.33) -- (501,121) -- cycle ;
\draw  [fill={rgb, 255:red, 0; green, 0; blue, 0 }  ,fill opacity=1 ] (630,100.67) -- (630,140) -- (625.33,140) -- (625.33,100.67) -- cycle ;
\draw  [fill={rgb, 255:red, 0; green, 0; blue, 0 }  ,fill opacity=1 ] (619,79.67) -- (619,119) -- (614.33,119) -- (614.33,79.67) -- cycle ;
\draw  [fill={rgb, 255:red, 0; green, 0; blue, 0 }  ,fill opacity=1 ] (605.67,71) -- (605.67,110.33) -- (601,110.33) -- (601,71) -- cycle ;
\draw  [fill={rgb, 255:red, 0; green, 0; blue, 0 }  ,fill opacity=1 ] (530,150.67) -- (530,190) -- (525.33,190) -- (525.33,150.67) -- cycle ;
\draw  [fill={rgb, 255:red, 0; green, 0; blue, 0 }  ,fill opacity=1 ] (594.67,50) -- (594.67,89.33) -- (590,89.33) -- (590,50) -- cycle ;
\draw  [fill={rgb, 255:red, 0; green, 0; blue, 0 }  ,fill opacity=1 ] (534.33,50) -- (550.33,50) -- (550.33,58.33) -- (534.33,58.33) -- cycle ;
\draw  [fill={rgb, 255:red, 0; green, 0; blue, 0 }  ,fill opacity=1 ] (569.67,181.67) -- (585.67,181.67) -- (585.67,190) -- (569.67,190) -- cycle ;
\draw  [fill={rgb, 255:red, 0; green, 0; blue, 0 }  ,fill opacity=1 ] (614,174.33) -- (630,174.33) -- (630,182.67) -- (614,182.67) -- cycle ;
\draw  [fill={rgb, 255:red, 0; green, 0; blue, 0 }  ,fill opacity=1 ] (584.33,150) -- (600.33,150) -- (600.33,158.33) -- (584.33,158.33) -- cycle ;
\draw  [fill={rgb, 255:red, 0; green, 0; blue, 0 }  ,fill opacity=1 ] (564,74.33) -- (580,74.33) -- (580,82.67) -- (564,82.67) -- cycle ;
\draw  [fill={rgb, 255:red, 0; green, 0; blue, 0 }  ,fill opacity=1 ] (540,157.33) -- (556,157.33) -- (556,165.67) -- (540,165.67) -- cycle ;
\draw  [fill={rgb, 255:red, 0; green, 0; blue, 0 }  ,fill opacity=1 ] (519.67,81.67) -- (535.67,81.67) -- (535.67,90) -- (519.67,90) -- cycle ;
\draw  [fill={rgb, 255:red, 0; green, 0; blue, 0 }  ,fill opacity=1 ] (490,57.33) -- (506,57.33) -- (506,65.67) -- (490,65.67) -- cycle ;
\draw  [fill={rgb, 255:red, 0; green, 0; blue, 0 }  ,fill opacity=1 ] (598.33,94.33) -- (598.33,110.33) -- (590,110.33) -- (590,94.33) -- cycle ;
\draw  [fill={rgb, 255:red, 0; green, 0; blue, 0 }  ,fill opacity=1 ] (605.67,50) -- (605.67,66) -- (597.33,66) -- (597.33,50) -- cycle ;
\draw  [fill={rgb, 255:red, 0; green, 0; blue, 0 }  ,fill opacity=1 ] (630,79.67) -- (630,95.67) -- (621.67,95.67) -- (621.67,79.67) -- cycle ;
\draw  [fill={rgb, 255:red, 0; green, 0; blue, 0 }  ,fill opacity=1 ] (622.67,124) -- (622.67,140) -- (614.33,140) -- (614.33,124) -- cycle ;
\draw  [fill={rgb, 255:red, 0; green, 0; blue, 0 }  ,fill opacity=1 ] (522.67,174) -- (522.67,190) -- (514.33,190) -- (514.33,174) -- cycle ;
\draw  [fill={rgb, 255:red, 0; green, 0; blue, 0 }  ,fill opacity=1 ] (530,129.67) -- (530,145.67) -- (521.67,145.67) -- (521.67,129.67) -- cycle ;
\draw  [fill={rgb, 255:red, 0; green, 0; blue, 0 }  ,fill opacity=1 ] (498.33,144.33) -- (498.33,160.33) -- (490,160.33) -- (490,144.33) -- cycle ;
\draw  [fill={rgb, 255:red, 0; green, 0; blue, 0 }  ,fill opacity=1 ] (505.67,100) -- (505.67,116) -- (497.33,116) -- (497.33,100) -- cycle ;
\draw  [fill={rgb, 255:red, 0; green, 0; blue, 0 }  ,fill opacity=1 ] (502.33,72.67) -- (502.33,77) -- (490,77) -- (490,72.67) -- cycle ;
\draw  [fill={rgb, 255:red, 0; green, 0; blue, 0 }  ,fill opacity=1 ] (572.67,50) -- (572.67,54.33) -- (560.33,54.33) -- (560.33,50) -- cycle ;
\draw  [fill={rgb, 255:red, 0; green, 0; blue, 0 }  ,fill opacity=1 ] (580,63) -- (580,67.33) -- (567.67,67.33) -- (567.67,63) -- cycle ;
\draw  [fill={rgb, 255:red, 0; green, 0; blue, 0 }  ,fill opacity=1 ] (509.67,85.67) -- (509.67,90) -- (497.33,90) -- (497.33,85.67) -- cycle ;
\draw  [fill={rgb, 255:red, 0; green, 0; blue, 0 }  ,fill opacity=1 ] (552.33,185.67) -- (552.33,190) -- (540,190) -- (540,185.67) -- cycle ;
\draw  [fill={rgb, 255:red, 0; green, 0; blue, 0 }  ,fill opacity=1 ] (559.67,172.67) -- (559.67,177) -- (547.33,177) -- (547.33,172.67) -- cycle ;
\draw  [fill={rgb, 255:red, 0; green, 0; blue, 0 }  ,fill opacity=1 ] (622.67,163) -- (622.67,167.33) -- (610.33,167.33) -- (610.33,163) -- cycle ;
\draw  [fill={rgb, 255:red, 0; green, 0; blue, 0 }  ,fill opacity=1 ] (630,150) -- (630,154.33) -- (617.67,154.33) -- (617.67,150) -- cycle ;
\draw  [fill={rgb, 255:red, 0; green, 0; blue, 0 }  ,fill opacity=1 ] (612.67,50) -- (617,50) -- (617,62.33) -- (612.67,62.33) -- cycle ;
\draw  [fill={rgb, 255:red, 0; green, 0; blue, 0 }  ,fill opacity=1 ] (490,177.67) -- (494.33,177.67) -- (494.33,190) -- (490,190) -- cycle ;
\draw  [fill={rgb, 255:red, 0; green, 0; blue, 0 }  ,fill opacity=1 ] (503,170.33) -- (507.33,170.33) -- (507.33,182.67) -- (503,182.67) -- cycle ;
\draw  [fill={rgb, 255:red, 0; green, 0; blue, 0 }  ,fill opacity=1 ] (525.67,100) -- (530,100) -- (530,112.33) -- (525.67,112.33) -- cycle ;
\draw  [fill={rgb, 255:red, 0; green, 0; blue, 0 }  ,fill opacity=1 ] (512.67,107.33) -- (517,107.33) -- (517,119.67) -- (512.67,119.67) -- cycle ;
\draw  [fill={rgb, 255:red, 0; green, 0; blue, 0 }  ,fill opacity=1 ] (603,120.33) -- (607.33,120.33) -- (607.33,132.67) -- (603,132.67) -- cycle ;
\draw  [fill={rgb, 255:red, 0; green, 0; blue, 0 }  ,fill opacity=1 ] (590,127.67) -- (594.33,127.67) -- (594.33,140) -- (590,140) -- cycle ;
\draw  [fill={rgb, 255:red, 0; green, 0; blue, 0 }  ,fill opacity=1 ] (625.67,57.33) -- (630,57.33) -- (630,69.67) -- (625.67,69.67) -- cycle ;
\draw  [fill={rgb, 255:red, 0; green, 0; blue, 0 }  ,fill opacity=1 ] (574.67,50) -- (580,50) -- (580,60.17) -- (574.67,60.17) -- cycle ;
\draw  [fill={rgb, 255:red, 0; green, 0; blue, 0 }  ,fill opacity=1 ] (554.33,179.83) -- (559.67,179.83) -- (559.67,190) -- (554.33,190) -- cycle ;
\draw  [fill={rgb, 255:red, 0; green, 0; blue, 0 }  ,fill opacity=1 ] (540,172.67) -- (545.33,172.67) -- (545.33,182.83) -- (540,182.83) -- cycle ;
\draw  [fill={rgb, 255:red, 0; green, 0; blue, 0 }  ,fill opacity=1 ] (624.67,157.17) -- (630,157.17) -- (630,167.33) -- (624.67,167.33) -- cycle ;
\draw  [fill={rgb, 255:red, 0; green, 0; blue, 0 }  ,fill opacity=1 ] (610.33,150) -- (615.67,150) -- (615.67,160.17) -- (610.33,160.17) -- cycle ;
\draw  [fill={rgb, 255:red, 0; green, 0; blue, 0 }  ,fill opacity=1 ] (490,79.83) -- (495.33,79.83) -- (495.33,90) -- (490,90) -- cycle ;
\draw  [fill={rgb, 255:red, 0; green, 0; blue, 0 }  ,fill opacity=1 ] (504.33,72.67) -- (509.67,72.67) -- (509.67,82.83) -- (504.33,82.83) -- cycle ;
\draw  [fill={rgb, 255:red, 0; green, 0; blue, 0 }  ,fill opacity=1 ] (560.33,57.17) -- (565.67,57.17) -- (565.67,67.33) -- (560.33,67.33) -- cycle ;
\draw  [fill={rgb, 255:red, 0; green, 0; blue, 0 }  ,fill opacity=1 ] (630,50) -- (630,55.33) -- (619.83,55.33) -- (619.83,50) -- cycle ;
\draw  [fill={rgb, 255:red, 0; green, 0; blue, 0 }  ,fill opacity=1 ] (507.33,184.67) -- (507.33,190) -- (497.17,190) -- (497.17,184.67) -- cycle ;
\draw  [fill={rgb, 255:red, 0; green, 0; blue, 0 }  ,fill opacity=1 ] (500.17,170.33) -- (500.17,175.67) -- (490,175.67) -- (490,170.33) -- cycle ;
\draw  [fill={rgb, 255:red, 0; green, 0; blue, 0 }  ,fill opacity=1 ] (530,114.33) -- (530,119.67) -- (519.83,119.67) -- (519.83,114.33) -- cycle ;
\draw  [fill={rgb, 255:red, 0; green, 0; blue, 0 }  ,fill opacity=1 ] (522.83,100) -- (522.83,105.33) -- (512.67,105.33) -- (512.67,100) -- cycle ;
\draw  [fill={rgb, 255:red, 0; green, 0; blue, 0 }  ,fill opacity=1 ] (607.33,134.67) -- (607.33,140) -- (597.17,140) -- (597.17,134.67) -- cycle ;
\draw  [fill={rgb, 255:red, 0; green, 0; blue, 0 }  ,fill opacity=1 ] (600.17,120.33) -- (600.17,125.67) -- (590,125.67) -- (590,120.33) -- cycle ;
\draw  [fill={rgb, 255:red, 0; green, 0; blue, 0 }  ,fill opacity=1 ] (622.83,64.33) -- (622.83,69.67) -- (612.67,69.67) -- (612.67,64.33) -- cycle ;

\end{tikzpicture}
    \caption{Mendivil-Taylor fractal}
    \label{fig:Mendivil-Taylor-fractal}
\end{figure}

 \begin{proof}
 We use the characterization about the convex hull in Theorem \ref{classification theorem2}. By the rotational symmetry of the rectangles inside the square, Condition (3) holds if and only if 
 \begin{enumerate}
 \item[(i)] The rectangle $R_1$ intersects the convex hull of the other three rectangles.
 \item[(ii)] The convex hull of $R_1\cup R_3$ intersects the convex hull of $R_2\cup R_4$. 
 \end{enumerate}
Denote the straight line joining $(1-s,0)$ and $(0,1-t)$  by
$$
\ell_1: f(x,y) = (1-t)x+ (1-s)y = (1-t)(1-s)
$$
Similarly, denote the straight line joining $(t,0)$ and $(s,1-t)$ by 
$$
\ell_2: g(x,y) = (1-t)x-(s-t)y = (1-t)t
$$
(i) is achieved if and only if the right upper corner point $(t,s)$ of $R_1$ lies above $\ell_1$, meaning that $f(t,s)\ge (1-t)(1-s)$. Plugging in and rearranging, we have 
$$
(1-t)(1-s-t)\le s(1-s).
$$
Similarly, (ii) is achieved if and only if $g(1-s,t) \le t(1-t)$, which means that 
$$
(1-t)(1-s-t)\le t(s-t).
$$
Indeed, for $s+t\le 1$, we have $t(s-t)\le s(1-s)$, so (ii) implies (i). Thus, the Mendivil-Taylor fractal has a very thick shadow in every direction if and only if $(1-t)(1-s-t)\le t(s-t)$. Solving for $t$, we obtain the desired characterization.

 \end{proof}

\medskip

\underline{\it 2. A Rotated square in the middle} 

\medskip

This example was considered by Falconer and Fraser in \cite{falconer2013visible} and Farkas in \cite{farkas2020interval}. 

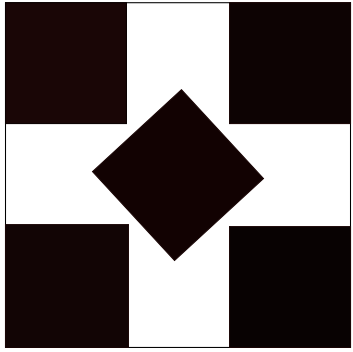
\begin{figure}[h]
 
\begin{tikzpicture}[x=0.75pt,y=0.75pt,yscale=-1,xscale=1]

\draw   (222,67) -- (396,67) -- (396,241) -- (222,241) -- cycle ;
\draw  [color={rgb, 255:red, 11; green, 1; blue, 1 }  ,draw opacity=1 ][fill={rgb, 255:red, 26; green, 6; blue, 6 }  ,fill opacity=1 ] (222,67) -- (283,67) -- (283,128) -- (222,128) -- cycle ;
\draw  [color={rgb, 255:red, 26; green, 6; blue, 6 }  ,draw opacity=1 ][fill={rgb, 255:red, 8; green, 2; blue, 2 }  ,fill opacity=1 ] (335,180) -- (396,180) -- (396,241) -- (335,241) -- cycle ;
\draw  [color={rgb, 255:red, 26; green, 6; blue, 6 }  ,draw opacity=1 ][fill={rgb, 255:red, 18; green, 5; blue, 5 }  ,fill opacity=1 ] (222,179) -- (284,179) -- (284,241) -- (222,241) -- cycle ;
\draw  [color={rgb, 255:red, 26; green, 6; blue, 6 }  ,draw opacity=1 ][fill={rgb, 255:red, 13; green, 3; blue, 3 }  ,fill opacity=1 ] (335,67) -- (396,67) -- (396,128) -- (335,128) -- cycle ;
\draw  [color={rgb, 255:red, 26; green, 6; blue, 6 }  ,draw opacity=1 ][fill={rgb, 255:red, 18; green, 2; blue, 2 }  ,fill opacity=1 ] (265.9,152.25) -- (310.75,110.9) -- (352.1,155.75) -- (307.25,197.1) -- cycle ;

\end{tikzpicture}

  \caption{IFS for Example \ref{ex4.2}}
    \label{fig:Mendivil-Taylor-fractal}
\end{figure}

\begin{example}\label{ex4.2}
Let $0<r<1/2$. Consider the self-similar IFS on the unit square generated by 
$$
\phi_1(x) = rx, \ \phi_2(x) = rx+ (1-r,0), \ \phi_3(x) = rx+(0,1-r), \ \phi_4(x) = rx+ (1-r,1-r), 
$$
$$
\phi_5(x) = rR_{\pi/4}x+ \left(\frac{1}{2}, \frac12-\frac{\sqrt{2}}{2}r\right)
$$
Then (1) the attractor projects very thick shadows in every 1-dimensional subspace if and only if $r\ge 1/3$, and (2) the attractor is totally disconnected if $r<\frac{1}{2+\frac{1}{\sqrt{2}}}$. 
\end{example}

 \begin{proof}
 (1). We will apply (2) from Theorem \ref{classification theorem2}. Notice that the attractor projects very thick shadows if and only if the corner square always intersects the convex hull of the remaining four squares. A direct check shows that the rotated square is always inside the convex hull of the three corner squares. Hence, by symmetry, the condition is equivalent to the right hand corner of the square $[0,r]^2$, i.e. $(r,r)$ lying above the line $x+y = 1-r$, which is the line joining $(0,1-r)$ and $(1-r,0)$. Hence, $r+r\ge 1-r$, meaning $r\ge 1/3.$

\medskip
(2). By ensuring that the corner $(r,r)$ not intersecting the rotated square, which is exactly when $r<\frac{1}{2+\frac{1}{\sqrt{2}}}$,  all squares will not intersect each other.  Hence, the fractal will be totally disconnected.  
 
 \end{proof}

Consequently, a totally disconnected self-similar set projecting very thick shadows in every 1-dimensional subspace exists if we take $1/3\le r < \frac{1}{2+1/{\sqrt{2}}}$. Farkas indicated how to use this pattern to create self-similar totally disconnected sets whose Hausdorff dimension is arbitrarily close to 1. However, there were no explicit mappings written down and it is unclear how one can generalize into higher dimensions. In the following example, we provide an explicit solution  based on fractal squares/cubes. 

\medskip

\underline{\it 3. Cross and Corner Fractal cubes} 

\begin{example}\label{cross_corner}
    
[Cross and Corner Fractal Cubes]
     We introduce a fractal cube construction that we will call the \textit{cross and corner fractal}. To begin, fix some odd positive integer $n$. Divide the unit cube into $n \times \cdots\times n$ smaller cubes on $\R^d$. We will denote by ${\mathcal P}_d$ the set of all $d\times d$ permutation matrices that map the coordinate hyperplane $x_d = 0$ to another coordinate plane $x_j = 0$ for some $j = 1,...,d$. 

\medskip

 Take the cross in the center of our grid, whose digits are
     $$
     { {\mathtt Cross}} = \bigcup_{\sigma\in{\mathcal P}_d}\sigma\left\{(j,\frac{n-1}{2},..., \frac{n-1}{2}) : j = 1, \ldots , n-2\right\}.
     $$

We will say the {\it diagonals} are all the unit cubes, such that each of them is connected only by a corner, from $(\epsilon_1,...,\epsilon_d)$ to $(1-\epsilon_1,...,1-\epsilon_d)$, for $\epsilon_1,...,\epsilon_d\in\{0,1\}$.    

\medskip

For each corner of the unit cube,  we will choose cubes on the diagonal consecutively until we hit the convex hull of the cross. For example, at the origin, we choose cubes on the diagonal until we hit the hyperplane $x_1+...+x_d = \frac{(n-1)(d-1)}{2}+1$, which is the convex hull of the points $(1,\frac{n-1}{2}, ...,\frac{n-1}{2})$,$(\frac{n-1}{2},1, \frac{n-1}{2} ...,\frac{n-1}{2})$,..., $(\frac{n-1}{2}, ...,\frac{n-1}{2},1)$. As these diagonal cubes are of the form $(j,j,...,j)$ where $j = 1,...,n-1$, When we plug them into the hyperplane equation, we find that 
\begin{equation}\label{eq_Nd}
N_d = \lfloor\frac{(n-1)(d-1)}{2d}+\frac1{d}\rfloor
\end{equation}
cubes are needed. By symmetry, we choose, from every corner of the cubes, $N_d$ consecutive diagonal cubes from the corner. Collect all those digits as the set $\mathtt{Corner}$. 
     Then our digit set is ${\mathcal D} =\mathtt{Cross}\cup \mathtt{Corner} $,
     and our IFS will be $\{\frac{1}{n} (x+d) : d \in \mathcal{D}\}$. In Figure \ref{Fig_a2d}, the first two iterations of the 2 dimensional cross and corner fractal squares are shown. In Figure \ref{Fig}, the cross and one side of the corners of the fractal construction is shown. 

      \begin{figure}[h]\label{Fig_a2d}
            \includegraphics[height=2.5in]{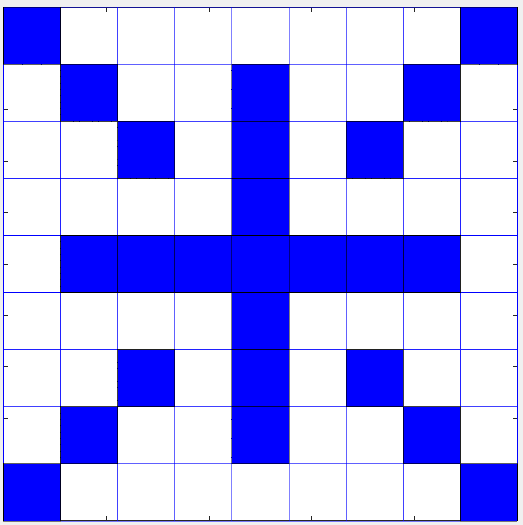}  
                        \includegraphics[height=2.5in]{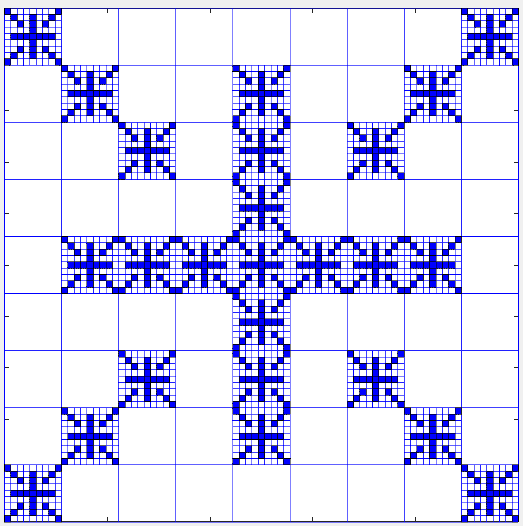} 
         \caption{A two dimensional picture for the cross and corner fractal}
                \end{figure}

      \begin{figure}[h]\label{Fig}
            \includegraphics[height=3in]{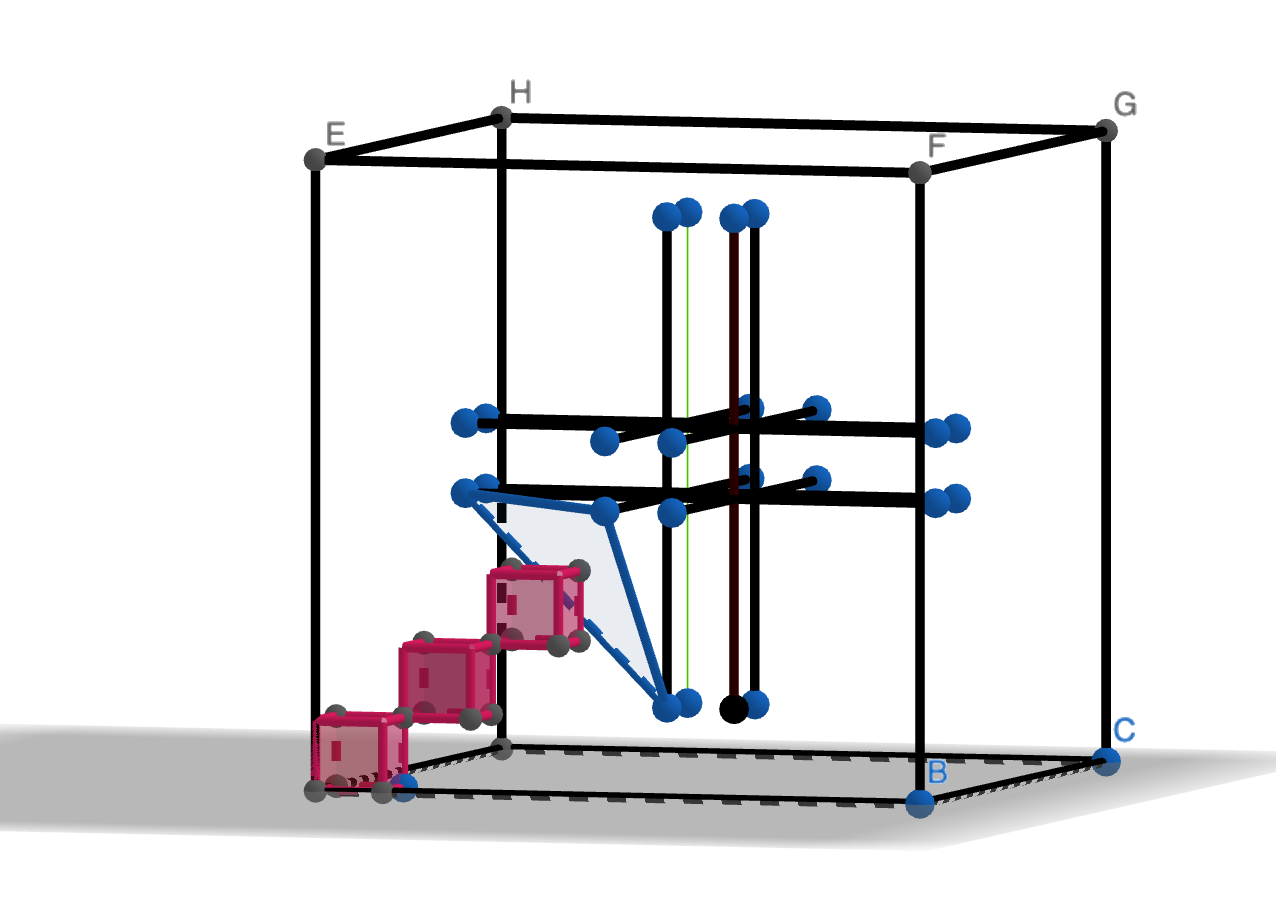}  
    \caption{A part of the three dimensional picture for the cross and corner fractal. The whole construction is to include the diagonal cubes starting from all corners}
                \end{figure}
     We will call $K_n$ to be the self-similar fractal cubes that this IFS generates. Then, \\ (1). $K_n$ is totally disconnected. \\
     (2). $K_n$ projects very thick shadows in every 1-dimensional subspace. \\
     (3). dim$_H(K_n)\to 1$ as $n\to\infty$. 
\end{example}
     \medskip
     \begin{proof}

(1). To see that $K_n$ is totally disconnected, we apply Proposition \ref{prop_totally-disconnect}. At the $k$th stage of the iteration, the connected components are either a cross inside a cube of length $1/k$ or a union of corner cubes which lies in $2^d$ cubes meeting at a point. As these union of corner cubes does not intersect the cross, so this component has diameter at most $\sqrt{2d}/k$. Therefore, the maximum of the diameters of the connected components at the $k$th iteration is bounded above by  $\sqrt{2d}/k$, which tends to 0 as $k\to \infty$. This shows $K_n$ is totally disconnected. 

     \medskip
     
    (2).     We now show that this fractal projects to intervals for all lines on $\R^d$. Note that by our construction, 
 the convex hull of the cross must intersect the convex hull of the connected components at each corner. Therefore, it is not possible to separate any connected components by hyperplanes. By Theorem \ref{classification theorem2}, our proof is complete. 

\medskip

(3).  For a fixed odd integer $n>2$, there are $d(n-2)+ d N_d $ number of cubes chosen for this pattern where $N_d$ is given by (\ref{eq_Nd}). Hence,  
$$
\mbox{dim}_H(K_n) = \frac{\log (d(n-2)+dN_d)}{\log n}.
$$
 As $d(n-2)+ d N_d  = O(n)$, the above expression goes to 1 as $n\to\infty$. 
     \end{proof}

\medskip

\section{Fractal Decomposability for other convex sets}

In the previous section, we demonstrate that cubes on $\R^d$ are fractal decomposable with self-similar sets of dimension arbitrarily close to one.  We will now extend our study to other convex polytopes in this section. Let $\overrightarrow{AB}$ denote the line segment joining $A$ and $B$ with direction pointing from point A to point B. 
A simplex on $\R^d$ is the convex hull of $d+1$ points $\{A_0,...,A_{d}\}$ where the vectors $\{\overrightarrow{A_0A_1},...,\overrightarrow{A_0A_d}\}$ are linearly independent. We will denote it by ${\mathcal S} (A_0,...,A_d)$

\medskip

Given a simplex ${\mathcal S} ={\mathcal S} (A_0,...,A_d)$, we consider another similar unrotated image of ${\mathcal S}$, denoted by ${\mathcal S} (A_0',...,A_d')  = r{\mathcal S} (A_0,...,A_d)+t$, where $0<r<1$ and $t$ is a translation vector such that ${\mathcal S} (A_0',...,A_d')$ lies in the interior of ${\mathcal S} (A_0,...,A_d)$.

\medskip

Fix $\lambda>0$. Notice that for all $i = 0,1,...,d$, the set $\{\overrightarrow{A_iA_j}: j\ne i, j\in \{0,...,d\}\}$ forms a basis for $\R^d$. We now define $d+1$ affine maps inside ${\mathtt S}$ by the following relations. For $i = 0,1,...,d$ and $j = 0,1,...,d$, 
$$
T_i (\overrightarrow{A_iA_j}) = \lambda \overrightarrow{A_iA_j}, \ \ j \ne i-1, \text{and}
$$
\begin{equation}\label{eqT_construct}
    T_i (\overrightarrow{A_iA_{i-1}}) =\overrightarrow{A_iA_{i+1}'}, \ \  j = i-1. 
\end{equation} 
 We will identify the addition as the addition on the cyclic group of $d+1$ elements. i.e. If $i=0$, we identify $ -1 = d$.  $\Phi_{\lambda} = \{T_0,..., T_d\}$ defines a self-affine IFS whose attractor has convex hull exactly equal to ${\mathcal S}(A_0,...,A_d).$ $A_i$ is the fixed point of $T_i$ for each $i$.  In this IFS, 
 $$
 T_i({\mathcal S}(A_0,...,A_d)) = {\mathcal S} (A_i, \{A_i+\lambda\overrightarrow{A_iA_{j}}:j\ne i-1,i\},{A_{i+1}'}).
 $$
Figure \ref{fig:triangular-ifs-iterations} and \ref{fig:R3_simplex} illustrate the IFS on $\R^2$ and $\R^3$. 

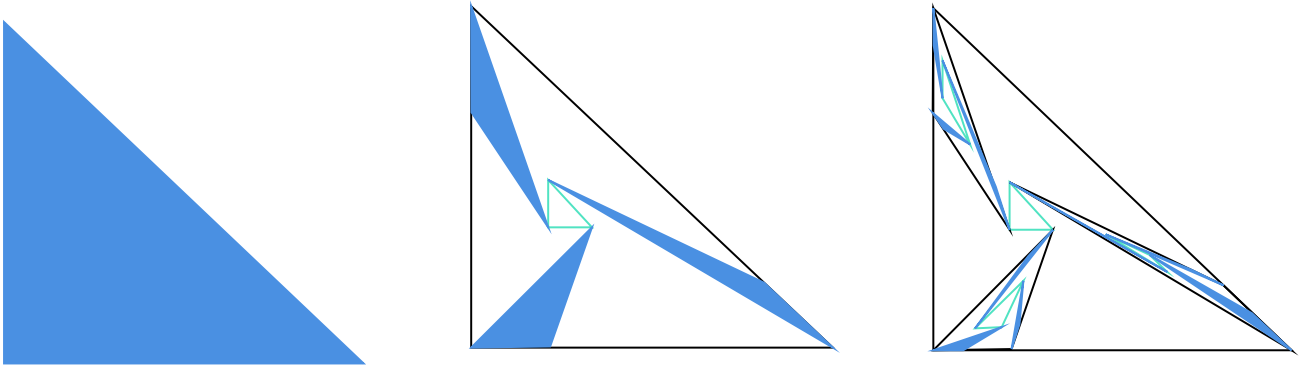
\begin{figure}[h]
    \centering

\tikzset{every picture/.style={line width=0.75pt}} 

\begin{tikzpicture}[x=0.75pt,y=0.75pt,yscale=-1,xscale=1]

\draw  [color={rgb, 255:red, 74; green, 144; blue, 226 }  ,draw opacity=1 ][fill={rgb, 255:red, 74; green, 144; blue, 226 }  ,fill opacity=1 ] (7,44) -- (188.34,216.33) -- (7,216.33) -- cycle ;
\draw   (242.6,36) -- (425.02,208.33) -- (242.6,208.33) -- cycle ;
\draw  [color={rgb, 255:red, 80; green, 227; blue, 194 }  ,draw opacity=1 ] (281.43,123.7) -- (303.52,147.62) -- (281.43,147.62) -- cycle ;
\draw  [color={rgb, 255:red, 74; green, 144; blue, 226 }  ,draw opacity=1 ][fill={rgb, 255:red, 74; green, 144; blue, 226 }  ,fill opacity=1 ] (303.52,147.62) -- (282.43,207.72) -- (242.6,208.33) -- cycle ;
\draw  [color={rgb, 255:red, 74; green, 144; blue, 226 }  ,draw opacity=1 ][fill={rgb, 255:red, 74; green, 144; blue, 226 }  ,fill opacity=1 ] (281.43,147.62) -- (242.77,89.56) -- (242.6,36) -- cycle ;
\draw  [color={rgb, 255:red, 74; green, 144; blue, 226 }  ,draw opacity=1 ][fill={rgb, 255:red, 74; green, 144; blue, 226 }  ,fill opacity=1 ] (281.43,123.7) -- (425.02,208.33) -- (390.37,175.62) -- cycle ;
\draw   (475.68,37) -- (656.33,209.67) -- (475.68,209.67) -- cycle ;
\draw  [color={rgb, 255:red, 80; green, 227; blue, 194 }  ,draw opacity=1 ] (514.13,124.87) -- (536.01,148.83) -- (514.13,148.83) -- cycle ;
\draw   (536.01,148.83) -- (515.13,209.05) -- (475.68,209.67) -- cycle ;
\draw  [color={rgb, 255:red, 80; green, 227; blue, 194 }  ,draw opacity=1 ] (521.09,174.64) -- (510.15,197.99) -- (496.73,198.61) -- cycle ;
\draw   (514.13,148.83) -- (475.35,89.84) -- (475.68,37) -- cycle ;
\draw   (514.13,124.87) -- (656.33,209.67) -- (622.03,176.89) -- cycle ;
\draw  [color={rgb, 255:red, 80; green, 227; blue, 194 }  ,draw opacity=1 ] (494.24,105.82) -- (480.32,82.47) -- (480.32,63.42) -- cycle ;
\draw  [color={rgb, 255:red, 74; green, 144; blue, 226 }  ,draw opacity=1 ][fill={rgb, 255:red, 74; green, 144; blue, 226 }  ,fill opacity=1 ] (510.15,197.99) -- (490.76,209.67) -- (475.68,209.67) -- cycle ;
\draw  [color={rgb, 255:red, 74; green, 144; blue, 226 }  ,draw opacity=1 ][fill={rgb, 255:red, 74; green, 144; blue, 226 }  ,fill opacity=1 ] (494.24,105.82) -- (480.82,97.83) -- (475.35,89.84) -- cycle ;
\draw  [color={rgb, 255:red, 74; green, 144; blue, 226 }  ,draw opacity=1 ][fill={rgb, 255:red, 74; green, 144; blue, 226 }  ,fill opacity=1 ] (496.73,198.61) -- (523.58,162.15) -- (536.01,148.83) -- cycle ;
\draw  [color={rgb, 255:red, 74; green, 144; blue, 226 }  ,draw opacity=1 ][fill={rgb, 255:red, 74; green, 144; blue, 226 }  ,fill opacity=1 ] (475.85,55.84) -- (480.32,82.47) -- (475.68,37) -- cycle ;
\draw  [color={rgb, 255:red, 74; green, 144; blue, 226 }  ,draw opacity=1 ][fill={rgb, 255:red, 74; green, 144; blue, 226 }  ,fill opacity=1 ] (521.09,174.64) -- (519.1,196.56) -- (515.13,209.05) -- cycle ;
\draw  [color={rgb, 255:red, 74; green, 144; blue, 226 }  ,draw opacity=1 ][fill={rgb, 255:red, 74; green, 144; blue, 226 }  ,fill opacity=1 ] (480.32,63.42) -- (507.17,127.12) -- (514.13,148.83) -- cycle ;
\draw  [color={rgb, 255:red, 80; green, 227; blue, 194 }  ,draw opacity=1 ] (562.61,151.19) -- (593.68,170.14) -- (584.73,161.53) -- cycle ;
\draw  [color={rgb, 255:red, 74; green, 144; blue, 226 }  ,draw opacity=1 ][fill={rgb, 255:red, 74; green, 144; blue, 226 }  ,fill opacity=1 ] (584.73,161.53) -- (656.33,209.67) -- (636.44,191.03) -- cycle ;
\draw  [color={rgb, 255:red, 74; green, 144; blue, 226 }  ,draw opacity=1 ][fill={rgb, 255:red, 74; green, 144; blue, 226 }  ,fill opacity=1 ] (562.61,151.19) -- (622.03,176.89) -- (605.62,168.91) -- cycle ;
\draw  [color={rgb, 255:red, 74; green, 144; blue, 226 }  ,draw opacity=1 ][fill={rgb, 255:red, 6; green, 34; blue, 70 }  ,fill opacity=1 ] (593.68,170.14) -- (537,138.18) -- (514.13,124.87) -- cycle ;

\end{tikzpicture}
    \caption{Illustration of the triangular IFS on $\R^2$.}
    \label{fig:triangular-ifs-iterations}
\end{figure}

\begin{figure}[h]
    \centering

\begin{tikzpicture}[x=0.75pt,y=0.75pt,yscale=-1,xscale=1]

\draw   (351.38,260.79) -- (365.79,10.57) -- (100.33,202.56) -- cycle ;
\draw  [dash pattern={on 4.5pt off 4.5pt}]  (100.33,202.56) -- (578.33,160.47) ;
\draw   (578.33,160.47) -- (351.38,260.79) -- (365.79,10.57) -- cycle ;
\draw   (357.78,173.15) -- (361.75,97.05) -- (288.62,155.44) -- cycle ;
\draw  [dash pattern={on 4.5pt off 4.5pt}]  (288.62,155.44) -- (420.31,142.64) ;
\draw   (420.31,142.64) -- (357.78,173.15) -- (361.75,97.05) -- cycle ;
\draw  [color={rgb, 255:red, 208; green, 2; blue, 27 }  ,draw opacity=1 ][fill={rgb, 255:red, 208; green, 2; blue, 27 }  ,fill opacity=0.25 ] (357.78,173.15) -- (168.14,217.74) -- (100.33,202.56) -- cycle ;
\draw [color={rgb, 255:red, 208; green, 2; blue, 27 }  ,draw opacity=1 ] [dash pattern={on 4.5pt off 4.5pt}]  (100.33,202.56) -- (222,191) ;
\draw [color={rgb, 255:red, 208; green, 2; blue, 27 }  ,draw opacity=1 ] [dash pattern={on 4.5pt off 4.5pt}]  (365.79,10.57) ;
\draw    (365.79,10.57) -- (351.38,260.79) ;
\draw  [color={rgb, 255:red, 237; green, 130; blue, 130 }  ,draw opacity=1 ][fill={rgb, 255:red, 254; green, 189; blue, 189 }  ,fill opacity=1 ] (365.79,10.57) -- (363,37) -- (288.62,155.44) -- (288.62,155.44) -- (338,30) -- cycle ;
\draw [color={rgb, 255:red, 225; green, 132; blue, 132 }  ,draw opacity=1 ]   (338,30) -- (363,37) ;
\draw [color={rgb, 255:red, 225; green, 124; blue, 124 }  ,draw opacity=1 ] [dash pattern={on 0.84pt off 2.51pt}]  (365.79,10.57) -- (292,148) ;
\draw [color={rgb, 255:red, 239; green, 140; blue, 140 }  ,draw opacity=1 ] [dash pattern={on 4.5pt off 4.5pt}]  (172,195) -- (168.14,217.74) ;
\draw  [color={rgb, 255:red, 235; green, 152; blue, 152 }  ,draw opacity=1 ][fill={rgb, 255:red, 234; green, 181; blue, 181 }  ,fill opacity=1 ] (578.33,160.47) -- (578.33,160.47) -- (516,164) -- (361.75,97.05) -- (578.33,160.47) -- cycle ;
\draw  [color={rgb, 255:red, 235; green, 152; blue, 152 }  ,draw opacity=1 ][fill={rgb, 255:red, 234; green, 181; blue, 181 }  ,fill opacity=1 ] (578.33,160.47) -- (578.33,160.47) -- (535,129) -- (361.75,97.05) -- (578.33,160.47) -- cycle ;
\draw [color={rgb, 255:red, 213; green, 134; blue, 134 }  ,draw opacity=1 ] [dash pattern={on 4.5pt off 4.5pt}]  (535,129) -- (521,167) ;
\draw  [color={rgb, 255:red, 224; green, 164; blue, 164 }  ,draw opacity=1 ][fill={rgb, 255:red, 232; green, 194; blue, 194 }  ,fill opacity=1 ] (378,248) -- (378,248) -- (351.38,260.79) -- (352,236) -- (420.31,142.64) -- cycle ;
\draw [color={rgb, 255:red, 210; green, 136; blue, 136 }  ,draw opacity=1 ]   (420.31,142.64) -- (351.38,260.79) ;

\draw (78,193.8) node [anchor=north west][inner sep=0.75pt]   [align=left] {$\displaystyle A_{1}$};
\draw (395,1.57) node [anchor=north west][inner sep=0.75pt]   [align=left] {$\displaystyle A_{4}$};
\draw (584,154.39) node [anchor=north west][inner sep=0.75pt]   [align=left] {$\displaystyle A_{3}$};
\draw (340,263) node [anchor=north west][inner sep=0.75pt]   [align=left] {$\displaystyle A_{2}$};
\draw (261,147.8) node [anchor=north west][inner sep=0.75pt]   [align=left] {$\displaystyle A_{1} '$};
\draw (335,82.8) node [anchor=north west][inner sep=0.75pt]   [align=left] {$\displaystyle A_{4} '$};
\draw (423,130.8) node [anchor=north west][inner sep=0.75pt]   [align=left] {$\displaystyle A_{3} '$};
\draw (355,168.8) node [anchor=north west][inner sep=0.75pt]   [align=left] {$\displaystyle A_{2} '$};
\draw (142,128) node [anchor=north west][inner sep=0.75pt]   [align=left] {};
\end{tikzpicture}
    \caption{An Illustration of the IFS on the simplex on $\R^3$}
    \label{fig:R3_simplex}
\end{figure}
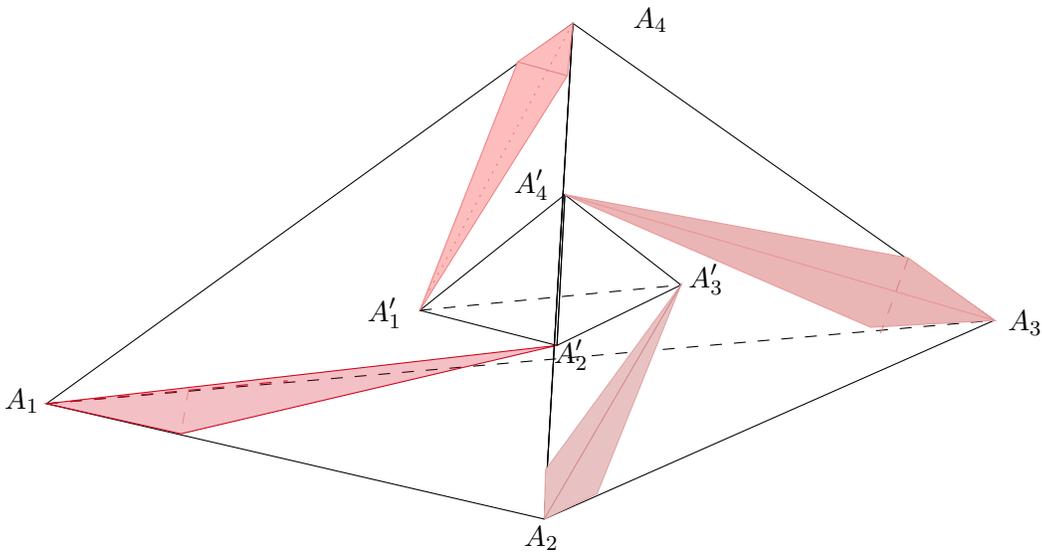

\begin{theorem}\label{theorem_simplex}
    The attractor $K_{\lambda}$ of the IFS $\Phi_{\lambda}$ defined above is totally disconnected, projects very thick shadows in every 1-dimensional subspace, and has dim$_H(K_{\lambda})\to 1$ as $\lambda\to 0.$

    \medskip

    Consequently, all simplices on $\R^d$ are fractal-decomposable with self-affine sets whose Hausdorff dimension is arbitrarily close to 1. 
\end{theorem}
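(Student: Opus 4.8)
The plan is to establish the three asserted properties of $K_\lambda$ one at a time; the final ``consequently'' statement is then immediate. Indeed, for any simplex $\mathcal S=\mathcal S(A_0,\dots,A_d)$ the IFS $\Phi_\lambda$ is built directly from its vertices, and $\conv(K_\lambda)=\mathcal S$: each $A_i$ is the fixed point of $T_i$ and hence lies in $K_\lambda$, while $K_\lambda\subseteq\mathcal S$ by invariance. So ``very thick shadows in every line'' is exactly condition (2) in the definition of $1$-dimensional fractal-decomposability, realized by $K_\lambda$, and the dimension statement supplies the ``arbitrarily close to $1$'' clause.

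I would treat the projection property first, as it is the conceptual core and, pleasantly, needs no smallness of $\lambda$. I use the contrapositive of Theorem \ref{classification theorem1} with $k=1$: it suffices to show that if a hyperplane $W=\{\ell=0\}$ ($\ell$ affine) misses every $T_i(\mathcal S)$, then $W$ misses $\mathcal S$. Since each $T_i(\mathcal S)$ is connected and contains both $A_i$ and $A_{i+1}'$, a hyperplane disjoint from it forces $\ell(A_i)$ and $\ell(A_{i+1}')$ to have the same nonzero sign. Writing $v_j=\ell(A_j)$ and using $A_j'=rA_j+t$, a one-line computation gives $\ell(A_{i+1}')=r\,v_{i+1}+\delta$ with $\delta$ independent of $i$, so $\operatorname{sign}(v_i)=\operatorname{sign}(r\,v_{i+1}+\delta)$ for every $i\in\Z/(d+1)$. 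Setting $m=-\delta/r$, this reads $v_i>0\Rightarrow v_{i+1}>m$ and $v_i<0\Rightarrow v_{i+1}<m$. If $m\ge 0$, one positive $v_i$ propagates cyclically to make all $v_j>0$; if $m<0$, one negative $v_i$ makes all $v_j<0$. In either case $\ell$ has constant sign on the vertices, so $W\cap\mathcal S=\varnothing$. The essential point is the cyclic shift $i\mapsto i+1$ built into \eqref{eqT_construct}: it is what couples the signs around the cycle and forces them to agree (a no-shift variant would fail).

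For total disconnectedness I would apply Proposition \ref{0-diam-total-disconn}, which requires two things. First, for $\lambda$ small the images $T_i(\mathcal S)$ are pairwise disjoint: as $\lambda\to0$ they collapse to the segments $[A_i,A_{i+1}']$, which for a concentric inner simplex are pairwise non-crossing (verified directly), so disjointness is stable for small $\lambda$. This propagates to level $k$, since distinct cylinders $\phi_\sigma(\mathcal S),\phi_\tau(\mathcal S)$ lie in the images under the injective map $\phi_{\sigma_1\cdots\sigma_{p-1}}$ of two disjoint first-level pieces; thus every connected component at level $k$ is a single cylinder. Second, I must show $\max_{\sigma\in\Sigma^k}\operatorname{diam}\phi_\sigma(\mathcal S)\to0$, i.e. that the IFS contracts uniformly. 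Here lies the main obstacle: each $T_i=\lambda I+R_i$ is a rank-one perturbation of $\lambda I$ whose single non-small eigenvalue is the barycentric coordinate $\mu_{i-1}<1$ of $A_{i+1}'$, so each map contracts individually, but because the matrices are non-normal one must pass to an adapted norm (equivalently, bound the joint spectral radius) to guarantee that products over long words shrink. This is the one genuinely technical point.

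Finally, for the dimension I combine two bounds. The very thick shadow gives $\dim_H(K_\lambda)\ge1$, since projecting onto a line whose shadow is a full interval cannot increase Hausdorff dimension. For the upper bound I use $\dim_H(K_\lambda)\le\dim_a(\Phi_\lambda)$. With $T_i=\lambda I+R_i$ and $R_i$ of rank one, singular-value interlacing under a rank-one perturbation yields $\alpha_2(T_i)\le\lambda$ while $\alpha_1(T_i)$ stays bounded; hence for any fixed $s\in(1,2]$ we get $\varphi^s(T_i)=\alpha_1(T_i)\alpha_2(T_i)^{s-1}\le C\lambda^{s-1}$, so $\sum_i\varphi^s(T_i)\to0$ as $\lambda\to0$. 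By submultiplicativity of $\varphi^s$ this makes $\sum_k\sum_{\sigma\in\Sigma^k}\varphi^s(T_\sigma)<\infty$, whence $\dim_a(\Phi_\lambda)\le s$ for $\lambda$ small. Letting $\lambda\to0$ and then $s\to1^+$ gives $\dim_H(K_\lambda)\to1$. Applying the entire construction to an arbitrary simplex then proves fractal-decomposability with dimension arbitrarily close to $1$.
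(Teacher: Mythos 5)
Your proposal is correct in substance, and its core — the projection property — takes a genuinely different route from the paper. The paper does not verify condition (1) of Theorem \ref{classification theorem1} directly; instead it passes to Theorem \ref{classification theorem2} and proves (Proposition \ref{prop_thick_simplices}) that for every proper index set $I$ the convex hulls of $\bigcup_{i\in I}T_i(\mathcal S)$ and $\bigcup_{i\in I^c}T_i(\mathcal S)$ intersect, using the elementary ``Fact'' that the cross segments $\overrightarrow{A_iA_{i+1}'}$ and $\overrightarrow{A_i'A_{i+1}}$ of two parallel segments must meet, together with a case analysis on the transition indices between $I$ and $I^c$ in the cyclic order. Your sign-propagation argument --- a hyperplane $\{\ell=0\}$ missing every connected piece $T_i(\mathcal S)\ni A_i,A_{i+1}'$ forces $\operatorname{sign}\,\ell(A_i)=\operatorname{sign}\bigl(r\,\ell(A_{i+1})+\delta\bigr)$ cyclically, hence constant sign on all vertices and so $\{\ell=0\}\cap\mathcal S=\varnothing$ --- proves the same statement via the contrapositive of Theorem \ref{classification theorem1}, and I checked the case split on $m=-\delta/r$: it is airtight and arguably more transparent, since it isolates exactly what the cyclic shift in (\ref{eqT_construct}) is for. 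The dimension part of your argument is essentially the paper's Proposition \ref{prop_Hdim}: the paper obtains the bound on $\alpha_2(T_i)$ by Cauchy interlacing on the principal minor $\lambda^2I_{d-1}$ of $T^\ast T$ (your rank-one singular-value interlacing, giving $\alpha_2(T_i)\le\lambda$, is an equivalent route), then uses submultiplicativity of $\varphi^s$ and the thick-shadow lower bound $\dim_H(K_\lambda)\ge 1$ exactly as you do.

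The one point you flag but do not discharge --- uniform contraction of long words, i.e.\ a joint spectral radius bound, needed both for the attractor's cylinders to shrink and for Proposition \ref{0-diam-total-disconn} --- is, to be fair, passed over in silence by the paper as well, which simply asserts that the $T_i(\mathcal S)$ are mutually disjoint and concludes total disconnectedness; so relative to the paper's own standard this is not a gap, but since you raised it, note that it closes cleanly in barycentric coordinates. Writing $t=\sum_m t_m A_m$ with all $t_m>0$ and $\sum_m t_m=1-r$, at $\lambda=0$ each linear part degenerates to the rank-one map $u_iw_i^{\top}$, where $u_i=\overrightarrow{A_iA_{i+1}'}$ and $w_i^{\top}v$ is the $A_{i-1}$-barycentric component of a displacement $v$; the coupling scalars are $w_i^{\top}u_j\in\{t_{i-1},\,t_{i-1}+r,\,t_{i-1}-1\}$ according to whether $j\notin\{i-2,i-1\}$, $j=i-2$, or $j=i-1$, and all have modulus strictly less than $1$. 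Hence products of the limiting maps decay geometrically, so the joint spectral radius is $<1$ at $\lambda=0$ and, by continuity of finite products and submultiplicativity of norms, remains $<1$ for small $\lambda$. The same barycentric bookkeeping shows the segments $[A_i,A_{i+1}']$ are pairwise non-crossing for \emph{any} interior placement of the inner simplex, not only the concentric one you verify, so first-level disjointness for small $\lambda$ holds in the paper's full generality.
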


    As $T_i({\mathcal S}(A_0,...,A_d))$ are mutually disjoint, it follows that the self-affine set must be totally disconnected. In Proposition \ref{prop_thick_simplices}, we will verify condition (2) from Theorem \ref{classification theorem2}. Hence, the invariant set will project very thick shadows in every 1-dimensional subspace. Finally, we will show that the Hausdorff dimension can be made arbitrarily close to 1 in Proposition \ref{prop_Hdim}.

\medskip

 \begin{proposition}\label{prop_thick_simplices}
    Let ${\mathcal J}_i = T_i({\mathcal S}(A_0,...,A_d)) $. Then for all  $I\subsetneq \{0,1,...,d\}$, 
    $$
    \mbox{conv} \left(\bigcup_{i\in I}{\mathcal J}_i\right)\cap\mbox{conv} \left(\bigcup_{i\in I^c}{\mathcal J}_i\right) \ne \emptyset.
    $$
 \end{proposition}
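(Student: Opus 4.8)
The plan is to argue by contradiction, via the hyperplane separation theorem, leveraging the fact that the inner simplex is an \emph{unrotated} (homothetic) copy of the outer one. Two structural facts about each piece $\mathcal{J}_i = T_i(\mathcal{S}(A_0,\dots,A_d))$ drive everything: since $A_i$ is the fixed point of $T_i$ we have $A_i \in \mathcal{J}_i$, and from the explicit vertex description $\mathcal{J}_i = \mathcal{S}(A_i,\{A_i+\lambda\overrightarrow{A_iA_j}\},A_{i+1}')$ we have the inner vertex $A_{i+1}' \in \mathcal{J}_i$. Thus each piece carries exactly one outer vertex $A_i$ and one inner vertex $A_{i+1}'$. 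A tempting direct approach — exhibiting an explicit common point — turns out to be awkward: the inner vertices are distributed one per piece, so the only points the two hulls \emph{obviously} share come from complementary faces of the inner simplex, and complementary faces of a simplex are disjoint. This is what pushes me toward a separation argument.

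So, fixing a nonempty proper $I$ (so that $I^c$ is nonempty too), I would suppose that $A := \conv(\bigcup_{i\in I}\mathcal{J}_i)$ and $B := \conv(\bigcup_{i\in I^c}\mathcal{J}_i)$ are disjoint. Each $\mathcal{J}_i$ is a compact simplex, so $A$ and $B$ are disjoint compact convex sets and can be \emph{strictly} separated: after relabeling, there is a linear functional $f$ and a constant $c$ with $A\subset\{f<c\}$ and $B\subset\{f>c\}$, and, strictness ensuring no vertex lies on $\{f=c\}$, no boundary ties occur. Reading off the two membership facts, I then get, for every index $k$, that $f(A_k)<c \iff k\in I$ and $f(A_k')<c \iff k\in I+1$, where $I+1=\{i+1:i\in I\}$ is the cyclic shift of indices modulo $d+1$.

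The key step is to combine these two threshold conditions. Since $\mathcal{S}(A_0',\dots,A_d') = r\,\mathcal{S}(A_0,\dots,A_d)+t$ is a pure scaling, I have $A_k' = rA_k+t$, hence $f(A_k') = r f(A_k)+f(t)$ with $r>0$; therefore $f(A_k')<c$ is equivalent to $f(A_k)<c'$ for the single rescaled threshold $c' = (c-f(t))/r$. Consequently $\{k: f(A_k)<c\}=I$ and $\{k: f(A_k)<c'\}=I+1$ are two sublevel sets of the \emph{same} scalars $f(A_0),\dots,f(A_d)$, so they are nested (one contained in the other, according to whether $c\le c'$ or $c'\le c$). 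As the shift is a bijection, $|I|=|I+1|$, and two nested finite sets of equal cardinality coincide, forcing $I=I+1$. But the only subsets of $\mathbb{Z}/(d+1)\mathbb{Z}$ invariant under $k\mapsto k+1$ are $\varnothing$ and the whole set, contradicting that $I$ is a nonempty proper subset.

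I expect the only genuinely delicate point to be the bookkeeping that makes both threshold conditions refer to the \emph{same} data $\{f(A_k)\}$; this is exactly where the unrotated hypothesis is essential. Had the inner simplex been rotated, $A_k' = rOA_k+t$ would give $f(A_k') = r f(OA_k)+f(t)$, involving a \emph{different} functional $f\circ O$ applied to the $A_k$, and the nesting argument would collapse. The strict-separation and equal-cardinality steps are then routine.
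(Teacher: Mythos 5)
Your proof is correct, and it takes a genuinely different route from the paper's. Both arguments rest on the same two membership facts you isolate --- $A_i \in \mathcal{J}_i$ (fixed point) and $A_{i+1}' = T_i(A_{i-1}) \in \mathcal{J}_i$ (from the defining relation $T_i(\overrightarrow{A_iA_{i-1}}) = \overrightarrow{A_iA_{i+1}'}$) --- and both exploit the unrotated-homothety hypothesis, but in opposite ways. The paper is constructive: after a two-case analysis on the cyclic structure of $I$ (either some $i \in I$ has both neighbors $i-1, i+1$ in $I^c$, or one passes to ``boundary'' indices $j, k$ where $I$ and $I^c$ alternate), it locates indices with $A_{j+1}', A_{k+1}$ in one hull and $A_{j+1}, A_{k+1}'$ in the other, and then invokes the elementary fact that the diagonals of a planar trapezoid must cross --- the parallelism of $\overrightarrow{A_{j+1}A_{k+1}}$ and $\overrightarrow{A_{j+1}'A_{k+1}'}$ being exactly the homothety --- thereby exhibiting an explicit point of the intersection. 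You instead argue by contradiction via strict separation of the compact convex sets $A$ and $B$ (legitimate, since each is the convex hull of a finite union of simplices in $\R^d$), and your identity $f(A_k') = r f(A_k) + f(t)$ converts the inner-vertex sign conditions into a second threshold on the \emph{same} scalars $f(A_0), \ldots, f(A_d)$, so that $I$ and its cyclic shift $I+1$ are nested sublevel sets of equal cardinality, hence equal, hence shift-invariant --- impossible for nonempty proper $I$. Your route buys uniformity (no case analysis on the cyclic structure of $I$) and makes completely transparent where ``unrotated'' enters; your closing remark about $f \circ O$ is exactly right, and is the analytic shadow of the fact that the paper's trapezoid argument degenerates without parallelism. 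The paper's route buys an explicit intersection point (visible in its figures) at the cost of the combinatorial bookkeeping with $j = \min\{i' \ge i : i' \in I,\, i'+1 \in I^c\}$ and its companion $k$. The only delicate points in your write-up --- strictness of the separation (so no $f(A_k)$ equals $c$ or $c'$, making the sublevel-set identifications exact) and the bijectivity of the shift (so $|I| = |I+1|$) --- are both handled correctly, and the nesting step is in any case insensitive to repeated values among the $f(A_k)$.
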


\begin{proof}
    Let $I\subsetneq \{0,1,...,d\}$ and let $i\in I$. We first notice that if $i\in I$, then the line segment $\overrightarrow{A_iA_{i+1}'}\in  \mbox{conv} \left(\bigcup_{i\in I}{\mathcal J}_i\right)$ by (\ref{eqT_construct}). The proof will be based on a simple geometrical fact.

\medskip

    {\bf Fact:} Let $\overrightarrow{AB}$ and $\overrightarrow{CD}$ be two parallel line segments on $\R^2$ such that the vectors are in the same direction. Then the line segment $\overrightarrow{AD}$ and $\overrightarrow{CB}$ must intersect.
    
    \medskip
    
   The proof of this fact is elementary (e.g. we can put  $\overrightarrow{AB}$ and $\overrightarrow{CD}$ on $y=0$ and $y=1$ respectively and solve for the intersection), so we will omit it. We now have two cases:

    
    \medskip
    
   (1). Suppose that $i-1,i+1\in I^c.$ We note that the line segment $\overrightarrow{A_iA_{i+1}'}\in \mbox{conv} \left(\bigcup_{i\in I}{\mathcal J}_i\right)$. On the other hand, the points $A_i'$ and $A_{i+1}$ are in $\mbox{conv} \left(\bigcup_{i\in I^c}{\mathcal J}_i\right)$. Hence, the line segment $\overrightarrow{A_{i}'A_{i+1}}\in\mbox{conv} \left(\bigcup_{i\in I^c}{\mathcal J}_i\right) $ as well. From our construction of the smaller simplex,  $\overrightarrow{A_iA_{i+1}}$ and $\overrightarrow{A_i'A_{i+1}'}$ are two parallel line segments pointing in the same direction. Thus, the points $A_i,A_i',A_{i+1},A_{i+1}'$ lie in the same two dimensional plane. In particular, $\overrightarrow{A_{i}A_{i+1}'}$ and $\overrightarrow{A_{i}'A_{i+1}}$ must intersect by the {\bf Fact}. This means that the convex hull cannot be empty.

   \medskip

    Because of (1), if $I$ consists of only one element, then the conclusion of the proposition must hold. The same is true if $I^c$ consists of only one element as we can switch the role of $I$ and $I^c$ in the proof of case (1). Hence, we can assume that both $I$ and $I^c$ contains at least two elements for case (2).

   \medskip

   (2). If (1) does not hold and $I,I^c$ has at least two elements, then $i-1$ or $i+1\in I.$ In this case, we let 
   $$
   j = \min\{i'\ge i: i'\in I, i'+1\in I^c\}, \ k = \max\{i'< i: i'+1\in I, i'\in I^c\},
   $$   
   Since $I,I^c$ has at least two elements and (1) does not hold,  $k+1\ne j$ and $j+1\ne k$. Moreover, we have $j\in I$, $j+1\in I^c$, $k\in I^c$ and   $k+1\in I$.  This means that the points $A_{j+1}', A_{k+1}\in \mbox{conv}(\bigcup_{i\in I}{\mathcal J}_i)$ and $A_{j+1}, A_{k+1}'\in \mbox{conv}(\bigcup_{i\in I^c}{\mathcal J}_i)$, and so are their respective line segments. We now consider the plane generated by $A_{j+1}, A_{j+1}', A_{k+1}, A_{k+1}'$. The {\bf Fact} implies that the line segment $\overrightarrow{A_{j+1}A_{k+1}'}$ and $\overrightarrow{A_{j+1}'A_{k+1}}$  must intersect, so the intersection of the convex hulls cannot be empty. 
 This completes the whole proof. 
   \end{proof}

\medskip

 \begin{proposition}\label{prop_Hdim}
$$\lim_{\lambda\to0} \dim_H(K_{\lambda})= 1.$$
 \end{proposition}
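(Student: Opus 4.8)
The plan is to sandwich $\dim_H(K_\lambda)$ between $1$ and a bound that decreases to $1$. For the lower bound, I would use that $K_\lambda$ already projects a very thick shadow in every line (established in Proposition~\ref{prop_thick_simplices} together with Theorem~\ref{classification theorem2}): for any $1$-dimensional subspace $W$, the set $\pi_W(K_\lambda) = \pi_W(C)$ is a nondegenerate segment because $C$ is a full-dimensional simplex. Since $\pi_W$ is $1$-Lipschitz, it cannot raise Hausdorff dimension, so $\dim_H(K_\lambda) \ge \dim_H(\pi_W(K_\lambda)) = 1$. For the upper bound I would invoke Falconer's inequality $\dim_H(K_\lambda) \le \dim_a(\Phi_\lambda)$ recorded in Section~2, which reduces the entire statement to proving $\dim_a(\Phi_\lambda) \to 1$ as $\lambda \to 0$.

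The core estimate concerns the singular values of a single generator $T_i$, and it exploits that $T_i$ contracts an entire hyperplane of directions by the factor $\lambda$. Set $U_i = \operatorname{span}\{\overrightarrow{A_iA_j} : j \ne i-1,\, j \ne i\}$, which is $(d-1)$-dimensional. By the defining relation $T_i(\overrightarrow{A_iA_j}) = \lambda \overrightarrow{A_iA_j}$ for $j \ne i-1$, every $v \in U_i$ satisfies $T_i v = \lambda v$, so $\|T_i|_{U_i}\| = \lambda$. The min-max characterization $\alpha_2(T_i) = \min_{\dim V = d-1}\|T_i|_V\|$ then gives $\alpha_2(T_i) \le \lambda$, while $\alpha_1(T_i) = \|T_i\| < 1$ since $\Phi_\lambda$ is a contracting IFS. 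Hence, for every $s \in [1,2]$,
\[
\varphi^s(T_i) = \alpha_1(T_i)\,\alpha_2(T_i)^{\,s-1} \le \lambda^{\,s-1}.
\]

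To control all words at once, I would use the submultiplicativity $\varphi^s(TS) \le \varphi^s(T)\,\varphi^s(S)$ of the singular value function. With $P(s) := \sum_{i=0}^d \varphi^s(T_i) \le (d+1)\lambda^{\,s-1}$, submultiplicativity gives $\sum_{\sigma \in \Sigma^k}\varphi^s(T_\sigma) \le P(s)^k$, so the defining series \eqref{affinity} of the affinity dimension converges whenever $P(s) < 1$. Now $(d+1)\lambda^{\,s-1} < 1$ exactly when $s > 1 + \log(d+1)/|\log\lambda|$, so
\[
\dim_a(\Phi_\lambda) \le 1 + \frac{\log(d+1)}{|\log\lambda|},
\]
which tends to $1$ as $\lambda \to 0$. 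Together with the lower bound this yields $\dim_H(K_\lambda) \to 1$.

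The only genuinely nontrivial step is the bound $\alpha_2(T_i) \le \lambda$: the matrices $T_i$ are far from normal, so one should resist diagonalizing them and instead identify the explicit codimension-one subspace $U_i$ on which $T_i$ acts as a pure $\lambda$-homothety and feed it into the min-max principle. Once that is secured, the surviving large singular value $\alpha_1(T_i)$ --- which does not shrink with $\lambda$ --- is harmless, since in the range $s \in [1,2]$ it enters $\varphi^s$ only linearly and is absorbed by the contractivity bound $\|T_i\| < 1$. The remaining points are routine bookkeeping: restricting to $s \in [1,2]$ (valid for $d \ge 2$, which is where the relevant dimensions sit) so that $\varphi^s = \alpha_1\alpha_2^{\,s-1}$, and checking that for small $\lambda$ the maps $T_i$ are indeed contractions with the asserted image simplices.
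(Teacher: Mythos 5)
Your proposal is correct and takes essentially the same route as the paper: lower bound $\dim_H(K_\lambda)\ge 1$ from the everywhere very thick shadow property, upper bound via Falconer's inequality $\dim_H \le \dim_a$, with the key estimate $\alpha_2(T_i)\le\lambda$ coming from the codimension-one $\lambda$-eigenspace of $T_i$ and the series \eqref{affinity} controlled by submultiplicativity of $\varphi^s$. The only cosmetic difference is that you extract the second-singular-value bound by the Courant--Fischer min-max principle applied to $U_i$, whereas the paper writes $T_i$ in an orthonormal basis adapted to the face subspace $H_i$ and applies Cauchy interlacing to $T_i^{\ast}T_i$ --- these are equivalent arguments.
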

 \begin{proof}
 We will give an estimate of the affinity dimension (\ref{affinity}) of the affine IFS . To do so, we will need to give an estimate of the singular values of the linear part of $T_i$. We will let $\alpha_{i,1}\ge\alpha_{i,2}...\ge\alpha_{i,d}>0$ be the singular values of $T_i$. Also, let
$$
\alpha_1 = \max \{\alpha_{i,1}: i = 0,...,d\},\ \alpha_2 = \max \{\alpha_{i,2}: i = 0,...,d\}.
$$
As $A_i$ is the fixed point of $T_i$, by putting $A_i$ as the origin of the coordinates, we may assume $T_i$ is a linear transformation. Moreover,  the subspace $H_i$ containing the face of the simplex generated by the points $\{A_j: j\in \{0,1,...,d\}\setminus\{i-1\}\}$ is the eigenspace of $T_i$ with eigenvalue $\lambda.$ Let ${\mathcal B}_i$ be an orthonormal basis for $H_i$; we extend ${\mathcal B}_i$ to an orthonormal basis for $\R^d$ by adding one more vector. Then 
$T$ admits a matrix representation of the form $\begin{bmatrix}\lambda I_{d-1} & {\bf u} \\ {\bf 0} & u_d \end{bmatrix}$ where $I_{d-1}$ is the $(d-1) \times$ \st{by} $(d-1)$ identity matrix and ${\bf u} = (u_1,...,u_{d-1})^{\top}$. Hence, 
$$
T^{\ast} T = \begin{bmatrix}\lambda^2 I_{d-1} & \lambda{\bf u} \\ \lambda{\bf u} & \sum_{i=1}^d u_i^2
\end{bmatrix}.
$$
By the Cauchy interlacing theorem applied to the principal minor $\lambda^2 I_{d-1}$, we have $\alpha_2\le \lambda^2$ (they are indeed equal if $d\ge 3$). Recall that the singular value function is sub-multiplicative. Therefore,
    \begin{align*}
        \sum_{k=1}^\infty \sum_{\sigma \in \Sigma^k} \varphi^s(T_\sigma) 
        &\leq \sum_{k=1}^\infty \sum_{\sigma \in \Sigma^k} (\alpha_{1, \sigma_1}\alpha_{2, \sigma_1}^{s-1})\cdots (\alpha_{1, \sigma_k} \alpha_{2, \sigma_k}^{s-1}) \\
        &\leq \sum_{k=1}^\infty \sum_{\sigma \in \Sigma^k} \alpha_1^k (\alpha_2^{s-1})^k \\
        &= \sum_{k=1}^\infty ((d+1)\alpha_1 \alpha_2^{s-1})^k
    \end{align*}
    The above summation is finite if and only if $(d+1)\alpha_1 \alpha_2^{s-1} < 1$. So we must have
    \[\dim_a(K) \leq 1+ \frac{\log((d+1)\alpha_1)}{\log\alpha_2^{-1}}.\]
    Since $\alpha_2 \to 0$ as  $\lambda \to 0$, we must have  $\dim_H(K_{\lambda})\le \dim_a(K) \to 1$.  Proposition \ref{prop_thick_simplices} implies that $K_\lambda$ projects very thick shadows, so $\dim_H(K_{\lambda})\ge 1$. Hence, our proposition follows. 
 \end{proof}

\noindent{\bf Proof of Theorem \ref{theorem_Main1} (1).} We note that if $C$ is a convex polytope,, then $C$ is a finite union of simplices. Denote all these simplices by  $\{\Delta_1, \Delta_2, \ldots, \Delta_N\}$. By Theorem \ref{theorem_simplex}, there exists an affine IFS $\Phi_j$ whose attractor $K_j$ projects very thick shadows in every 1-dimensional subspace and has Hausdorff dimension aribtirarily close to 1\st{,} for each $\Delta_j$, where $1 \leq j \leq N$. By Proposition \ref{prop_totally-disconnect}, the union 
    \[K = \bigcup_{j=1}^N K_j\]
    is totally disconnected. Moreover, $K$ is perfect since each $K_j$ is perfect and the finite union of perfect sets is perfect. $K$ also has Hausdorff dimension arbitrarily close to 1 by the countable stability of Hausdorff dimension.
    It remains to show that $\pi_W(C) = \pi_W(K)$ for all $W\in G(d,1)$. Consider the following: 
    \begin{align*}
        \pi_W(C) &= \pi_W \left( \bigcup_{j=1}^N \Delta_j \right) 
        = \bigcup_{j=1}^N \pi_W(\Delta_j)
        = \bigcup_{j=1}^N \pi_W(K_j) 
        = \pi_W \left( \bigcup_{j=1}^N K_j \right) 
        = \pi_W (K).
    \end{align*}
    So every convex polytope is fractal decomposable with a finite union of self-affine sets whose Hausdorff dimension is arbitrarily close to 1, completing the proof of Theorem \ref{theorem_Main1} (1).

\medskip

{\bf Remark.} It is possible to obtain a very thin self-similar set inside  any triangle on $\R^2$ projecting very thick shadows in every direction. To see this, let $\Delta$ be a triangle. We notice a simple fact that every triangle is self-similar with 4 maps of contraction ratio 1/2. For $\lambda>0$ given in the affine IFS construction (See Figure \ref{fig:triangular-ifs-iterations}), we now partition the triangle into $4^n$  similar triangles where  $n = n_{\lambda}$ is the integer such that 
$$
 2^{-n}\le \frac{\lambda}{100} <2^{-n+1}.
$$
We now take all those small triangles that intersect that intersect $\Delta_i = T_i(\Delta)$, for $i=1,2,3$. These triangles form the first iteration of the self-similar IFS. Call this IFS $\Phi$. Because the self-similar triangulation covers each $\Delta_i$, the convex hull condition of Theorem \ref{classification theorem2} still holds, ensuring we have very thick shadows in every direction. Additionally, because there is a small triangle separating $\Delta_i$, the diameter of each connected component goes to 0, and we have that the attractor $K$ of $\Phi$ is totally disconnected by Theorem \ref{0-diam-total-disconn}.

Finally, we will calculate the Hausdorff dimension of $K$. We know that the contraction ratio is $\frac{1}{2^n}$. By standard volume counting, we estimate the number of maps covering each $\Delta_i$ is  $O(2^{2n} \lambda)$. Therefore, for some universal constant $C>0$,
\[\dim_H(K) \le  \frac{\log (C 2^{2n} \lambda)}{\log 2^n}  = 1+ \frac{\log (C2^n\lambda)}{\log 2^n} < 1+ \frac{\log 200 C}{\log 2^n}.\]
Since $n \to \infty$ as $\lambda \to 0$, we see that $\dim_H(K) \to 1$ as $\lambda \to 0$.

\medskip

\section{A Compact Fractal Decomposable Rectifiable Set}\label{Section6}

In this section, we aim at constructing a rectifiable compact set that is fractal decomposable. Let us recall some terminologies. A \textbf{$1$-set} is a set $E$ with finite and positive one-dimensional Hausdorff measure. A $1$-set $E$ is called {\bf rectifiable} if there exists countable sets $A_i$ and Lipschitz functions $f_i$ such that 
$$
{\mathcal H}^1 \left(E\setminus \bigcup_{i=1}^{\infty} f_i(A_i)\right) = 0.
$$
$E$ is called {\bf purely unrectifiable} if ${\mathcal H}^1(E\cap F) = 0$ for all rectifiable sets $F$. The well-known Bescovitch projection theorem states that 

\begin{theorem} [Bescovitch Projection Theorem, see \cite{Mattila_1995}]
    For a purely unrectifiable 1-set on $\R^2$, the projection of $E$ must have measure zero  for almost all directions.
\end{theorem}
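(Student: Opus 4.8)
The plan is to prove the equivalent integral-geometric statement that a purely unrectifiable $1$-set has zero \emph{Favard length}. First I would reduce to the case of a compact purely unrectifiable set $E$ with $\mathcal{H}^1(E)<\infty$, using inner regularity of the restriction of $\mathcal{H}^1$ to $E$ together with the fact that the conclusion is inherited from an exhaustion by compact subsets. The desired conclusion, that $\mathcal{L}^1(\pi_\theta(E))=0$ for almost every $\theta$, is equivalent by nonnegativity to the vanishing of the Favard length $\int_0^\pi \mathcal{L}^1(\pi_\theta(E))\,d\theta$. Since $\mathcal{L}^1(\pi_\theta(E)) \le \int_{\R} \#(E \cap \pi_\theta^{-1}\{y\})\,dy$, where $\#$ denotes cardinality of the intersection of $E$ with the fiber over $y$, it suffices to show that the integral-geometric (Favard) measure of $E$, obtained by integrating this multiplicity over all directions $\theta$, vanishes.

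The core of the argument is the dichotomy, due to Besicovitch, between regular and irregular $1$-sets. I would invoke the density theory of $1$-sets: for any $1$-set of finite measure the upper density equals $1$ at $\mathcal{H}^1$-almost every point, while rectifiability is equivalent to the lower density also equalling $1$, and equivalently to the almost-everywhere existence of an approximate tangent line. For a purely unrectifiable set the approximate tangent therefore fails to exist on a set of full measure, which forces the projected multiplicity to be spread out across directions. Concretely, I would argue the contrapositive: if the integral-geometric measure of $E$ were positive, then a Lebesgue-density analysis of the positive-measure projections $\pi_\theta(E)$, pulled back through the fibers $\pi_\theta^{-1}\{y\}$, would single out a positive-$\mathcal{H}^1$-measure subset of $E$ at which an approximate tangent in direction $\theta^\perp$ genuinely exists. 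Such a subset is rectifiable, contradicting the pure unrectifiability of $E$, and so the integral-geometric measure, hence the Favard length, must vanish.

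The main obstacle, and the reason this is a genuinely deep theorem rather than a Fubini exercise, is precisely the quantitative passage from ``the projection has positive Lebesgue measure for a positive-measure set of directions'' to ``the set has an approximate tangent on a positive-measure set of points.'' Making this rigorous requires the full strength of Besicovitch's density theorems, together with delicate covering arguments that convert a one-dimensional density statement about $\pi_\theta(E)$ into a two-dimensional tangency statement about $E$ itself. This is the technical heart of the Besicovitch--Federer projection theorem; in the present paper it enters only as a classical input, so I would cite it from \cite{Mattila_1995} rather than reproduce its proof.
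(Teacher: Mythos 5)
The paper does not prove this statement at all: it is quoted as the classical Besicovitch projection theorem with a citation to \cite{Mattila_1995}, which is exactly what you do in the end, so your treatment matches the paper's. Your sketch of the underlying Besicovitch--Federer argument is a reasonable high-level outline (though the step deducing an approximate tangent ``in direction $\theta^\perp$'' from positive projections is a substantial oversimplification of the actual density/condensation-direction analysis), but since you explicitly defer the proof to the cited reference, nothing further is required.
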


 Therefore purely unrectifiable sets will not project\st{ to} very thick shadows in almost all directions.

\medskip

 we will now prove Theorem \ref{theorem_Main1} (2), which predicts that every convex set is fractal-decomposable with compact totally disconnected rectifiable 1-sets. This question was first brought to Alan Chang by the first-named author. He later on discussed with Tuomas Oprenon, who provided to us a workable idea of the construction. We would like to thank Alan Chang and Tuomas Oprenon for supplying the main idea of the proof in the key lemma below, which utilizes the Venetian Blind Construction. Since the lemma is on dimension 2, we will parametrize the projection by $\pi_{\theta}$, $\theta\in[0,\pi)$, where $\pi_{\theta}$ is the orthogonal projection onto the line $y = (\tan\theta) x$. 

\begin{lemma} \label{tot-discon-rect-set-lemma}
    Let $E_0 = [0, 1] \times \{0\}$. Then there exists a totally disconnected rectifiable compact 1-set $E \subset [0, 1]^2$ such that $\pi_\theta (E) \supset \pi_\theta(E_0)$ for $\theta \in [0, \pi)$.
\end{lemma}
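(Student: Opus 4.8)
The plan is to reformulate the projection condition, reduce it to a single small arc of directions, and then realize $E$ on that arc as the graph of a Lipschitz function over a fat Cantor set, assembled by an iterated Venetian blind. First note that $\pi_\theta(E)\supseteq\pi_\theta(E_0)$ for all $\theta\in[0,\pi)$ is equivalent to the statement that every line meeting the base segment $E_0$ also meets $E$. Cover $[0,\pi)$ by finitely many small closed arcs $A_1,\dots,A_m$. If for each $j$ I can produce a totally disconnected rectifiable compact $1$-set $E_j\subset[0,1]^2$ with $\pi_\theta(E_j)\supseteq\pi_\theta(E_0)$ for $\theta\in A_j$, then $E:=\bigcup_{j=1}^m E_j$ is the desired set: any $\theta$ lies in some $A_j$, so the projection condition holds; $E$ is totally disconnected by Proposition \ref{prop_totally-disconnect}; and a finite union of rectifiable $1$-sets is again a rectifiable $1$-set. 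Hence it suffices to treat one arc.

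Fix an arc $A\subseteq(0,\pi/2]$ bounded away from $0$; arcs inside $(\pi/2,\pi)$ are obtained by reflection, and the two arcs near the horizontal projection $\theta\approx 0$ and $\theta\approx\pi$ are handled by the same construction rotated by $\pi/2$, namely a graph $\{(h(y),y):y\in D'\}$ with $D'$ a fat Cantor set in the $y$-axis and $h$ Lipschitz and \emph{onto} $[0,1]$, so that $\pi_0$ recovers all of $[0,1]$. For the arc $A$ I will construct $E=\{(x,f(x)):x\in D\}$, where $D\subset[0,1]$ is a fat Cantor set (so $0<\mathcal L^1(D)<\infty$) and $f:D\to[0,1]$ is $L$-Lipschitz. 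Because $\iota(x)=(x,f(x))$ is bi-Lipschitz, $E$ is then automatically compact, totally disconnected (a homeomorphic image of the totally disconnected $D$), rectifiable (one Lipschitz image of a subset of $\R$), and a genuine $1$-set, since $\mathcal H^1(E)\in[\mathcal L^1(D),\sqrt{1+L^2}\,\mathcal L^1(D)]$.

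I build $f$ by iterated Venetian blinding. Starting from $f\equiv 0$ on $[0,1]$, at each generation I replace every nearly-horizontal slat by finitely many shorter slats tilted up by a small angle $\phi_n$ and separated by small $x$-gaps $\gamma_n$, across each of which $f$ drops. Writing $g_\theta(x)=x\cos\theta+f(x)\sin\theta$ for the projection coordinate, the upward tilt forces the $g_\theta$-images of consecutive slats to overlap: a direct computation shows the top of one slat's image clears the bottom of the next exactly when $(w-\gamma)\tan\phi\,\sin\theta\ge\gamma\cos\theta$, with $w$ the slat width. Taking $\gamma_n\approx w\tan\phi_n/L$ makes this hold for every $\theta$ with $\tan\theta\ge 2/L$, so choosing $L$ large (depending only on the distance from $A$ to $0$) yields $\pi_\theta(K_{n+1})\supseteq\pi_\theta(K_n)\supseteq\pi_\theta(E_0)$ for all $\theta\in A$, where $K_n$ is the $n$th generation. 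Summable angles $\phi_n$ keep all slopes below $L$ and force $f_n\to f$ uniformly, while the coupling $\gamma_n\approx w\tan\phi_n/L$ keeps the total deleted length below $1$, so $D=\bigcap_n D_n$ is fat. To pass to the limit, fix $\theta\in A$ and $v\in[0,\cos\theta]$; the per-generation solutions $x_n\in D_n$ of $x_n\cos\theta+f_n(x_n)\sin\theta=v$ subconverge (using $D_n\downarrow D$ compact and $f_n\to f$ uniform) to some $x^*\in D$ with $g_\theta(x^*)=v$, so $\pi_\theta(E)=g_\theta(D)\supseteq[0,\cos\theta]=\pi_\theta(E_0)$ and coverage survives.

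The main obstacle is the tension built into this single object. Total disconnectedness forces genuine spatial gaps at every scale, yet the shadow must stay gap-free; rectifiability and the $1$-set property force $f$ to be uniformly Lipschitz over a \emph{positive}-measure Cantor set. These requirements pull against one another, since shrinking the gaps to keep $\mathcal L^1(D)>0$ tends to steepen the compensating drops and destroy the Lipschitz bound. The crux is therefore the quantitative coupling of the gap size $\gamma_n$ to both the tilt $\phi_n$ and the lower bound on $\tan\theta$ over the arc, arranged so that the blinds overlap in shadow with a fixed margin while $f$ remains $L$-Lipschitz.
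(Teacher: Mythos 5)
Your proposal is correct in substance, but it takes a genuinely different route from the paper. The paper runs a single Venetian blind construction --- iterated slats with vertical $\varepsilon_n$-jumps at dyadic abscissae --- and passes to a Hausdorff-metric limit: projection coverage is obtained softly (a line through $E_0$ meets every $E_n$, hence the limit), total disconnectedness comes from the dyadic vertical jumps, and the delicate part, rectifiability and the $1$-set property, is proved \emph{indirectly}: $E$ is embedded in a continuum $\Gamma$ of finite length, finiteness of $\mathcal{H}^1(\Gamma)$ uses the lower semicontinuity of $\mathcal{H}^1$ on convergent continua (Theorem \ref{Falconer85}), and positivity of $\mathcal{H}^1(E)$ is forced by the Besicovitch projection theorem. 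You instead build $E$ directly as a Lipschitz graph $\{(x,f(x)):x\in D\}$ over a fat Cantor set, so that compactness, total disconnectedness, rectifiability and $0<\mathcal{H}^1(E)<\infty$ are all immediate from the bi-Lipschitz parametrization --- no continuum theory and no Besicovitch needed --- at the price of a quantitative overlap inequality and a finite decomposition of $[0,\pi)$ into arcs. That decomposition is in fact forced, not a convenience: since $\mathcal{L}^1(g_\theta(D))\le(\cos\theta+L\sin\theta)\,\mathcal{L}^1(D)$ and $\mathcal{L}^1(D)<1$, a single Lipschitz graph over a positive-measure Cantor set in the $x$-axis cannot cover $\pi_\theta(E_0)$ for $\theta$ near $0$, and you correctly anticipate this with the rotated piece. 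The paper's limit set evades this obstruction because it is not a graph over a Cantor set (its $x$-projection is all of $[0,1]$), which is why one construction there covers all of $[0,\pi/2]$ at once.

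Two points deserve tightening in a full write-up, though both are routine given your stated coupling. First, your monotonicity claim $\pi_\theta(K_{n+1})\supseteq\pi_\theta(K_n)$ requires the subdivision to preserve each parent slat's endpoints (the extra rise from the tilt $\phi_{n+1}$ exactly compensated by the drops of size $\delta\approx L\gamma$ across the new interior gaps); otherwise the overlap margins across gaps of earlier generations can erode as $f_n$ evolves. With that normalization the per-junction condition simplifies to $\tan\theta\ge\gamma/\delta$, which your coupling $\gamma_n\approx w\tan\phi_n/L$ satisfies with margin on the arc. Second, for the arc near $\theta=0$ the rotated graph must have $h$ dropping slightly (a down-jump $\beta\le L\gamma$) across each $y$-gap so that the shadows overlap for all $\tan\theta\le\beta/\gamma$; ``the same construction rotated by $\pi/2$'' is the right idea, but note it is not a mere reflection of the first piece, since reflecting only yields coverage of the projection of a \emph{vertical} unit segment, not of $E_0$. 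With these two details supplied, your argument is complete and somewhat more self-contained than the paper's.
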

\begin{proof}
    It suffices to construct $E$ such that the conclusion holds for $\theta \in [0, \frac{\pi}{2}]$. To extend the conclusion to $\theta \in [\frac{\pi}{2}, \pi)$, we need only reflect $E$ about the line $x = \frac{1}{2}$. Unioning $E$ and its reflection will give us our desired result. 

    Now we will construct $E$ for $\theta \in [0, \frac{\pi}{2}]$ using the Venetian blind construction\footnote{Another example of this construction can be found on page 104 of \cite{falconer2004fractal}.}. To begin, $L(\mathbf{a}, \mathbf{b})$ will represent the line segment connecting $\mathbf{a}, \mathbf{b} \in \R^2$. Let $\{\varepsilon_n\}$ be a rapidly decreasing sequence, which will be specified later. Let
    \[E_1 = L\left((0, 0), \; \left(\frac{1}{2}, \varepsilon_1\right)\right) \cup L\left(\left( \frac{1}{2}, 0\right), \; \left(\frac{1}{2}, 1+\varepsilon_1\right)\right). \]
    Suppose $E_i$ is constructed. Let ${\bf e_2} = (0,1)$.
    Then we define 
    \[E_{i+1} = \bigcup_{L(\mathbf{a}, \mathbf{b}) \subset E_i} \left( L\left(\mathbf{a}, \; \frac{\mathbf{a} + \mathbf{b}}{2} + \varepsilon_{i+1}\mathbf{e_2}\right) \cup L\left(\frac{\mathbf{a} + \mathbf{b}}{2}, \mathbf{b} + \varepsilon_{i+1}\mathbf{e_2}\right)\right).\]
    There are $2^{j}$ line segments in $E_j$; for simplicity, we will write $E_j = \bigcup_{k=1}^{2^j} L_{k,j}$, where $L_{k,j}$ is the line segment ending at the vertical line $x = \frac{k}{2^j}$. We can think of $L_{k,j}$ as the    ``blind".  This construction of $E_i$ is shown in Figure \ref{E_n-construction}. Recall the Hausdorff metric between two compact sets is defined as
    $$
    d_H(E,F)  = \inf \left\{\delta: E\subset (F)_{\delta} \ \mbox{and} \ F\subset (E)_{\delta}\right\},
    $$
    where $(E)_{\delta}$ is the $\delta$-neighborhood of the set $E$. It is well-known that the set of all compact sets forms a complete metric space under $d_H$.

    \begin{figure}
        \centering
\tikzset{every picture/.style={line width=0.75pt}} 

\begin{tikzpicture}[x=0.75pt,y=0.75pt,yscale=-1,xscale=1]

\draw [color={rgb, 255:red, 124; green, 31; blue, 197 }  ,draw opacity=1 ][line width=2.25]    (40,20) -- (560,20) ;
\draw [color={rgb, 255:red, 0; green, 0; blue, 0 }  ,draw opacity=1 ][line width=0.75]    (40,130) -- (560,130) ;
\draw [color={rgb, 255:red, 124; green, 31; blue, 197 }  ,draw opacity=1 ][line width=2.25]    (40,130) -- (300,70) ;
\draw  [dash pattern={on 4.5pt off 4.5pt}]  (300,70) -- (300,130) ;
\draw  [dash pattern={on 4.5pt off 4.5pt}]  (560,70) -- (560,130) ;
\draw [color={rgb, 255:red, 124; green, 31; blue, 197 }  ,draw opacity=1 ][line width=2.25]    (300,130) -- (560,70) ;
\draw [color={rgb, 255:red, 0; green, 0; blue, 0 }  ,draw opacity=1 ][line width=0.75]    (40,270) -- (560,270) ;
\draw [color={rgb, 255:red, 0; green, 0; blue, 0 }  ,draw opacity=1 ][line width=0.75]  [dash pattern={on 4.5pt off 4.5pt}]  (40,270) -- (300,210) ;
\draw  [dash pattern={on 4.5pt off 4.5pt}]  (300,210) -- (300,270) ;
\draw  [dash pattern={on 4.5pt off 4.5pt}]  (560,210) -- (560,270) ;
\draw [color={rgb, 255:red, 0; green, 0; blue, 0 }  ,draw opacity=1 ][line width=0.75]  [dash pattern={on 4.5pt off 4.5pt}]  (300,270) -- (560,210) ;
\draw [color={rgb, 255:red, 124; green, 31; blue, 197 }  ,draw opacity=1 ][line width=2.25]    (40,270) -- (170,210) ;
\draw [color={rgb, 255:red, 124; green, 31; blue, 197 }  ,draw opacity=1 ][line width=2.25]    (170,240) -- (300,180) ;
\draw [color={rgb, 255:red, 124; green, 31; blue, 197 }  ,draw opacity=1 ][line width=2.25]    (430,240) -- (560,180) ;
\draw [color={rgb, 255:red, 124; green, 31; blue, 197 }  ,draw opacity=1 ][line width=2.25]    (300,270) -- (430,210) ;
\draw  [dash pattern={on 4.5pt off 4.5pt}]  (560,180) -- (560,210) ;
\draw  [dash pattern={on 4.5pt off 4.5pt}]  (430,210) -- (430,240) ;
\draw  [dash pattern={on 4.5pt off 4.5pt}]  (300,180) -- (300,210) ;
\draw  [dash pattern={on 4.5pt off 4.5pt}]  (170,210) -- (170,240) ;
\draw    (570,70) -- (570,97) -- (570,130) ;
\draw [shift={(570,130)}, rotate = 270] [color={rgb, 255:red, 0; green, 0; blue, 0 }  ][line width=0.75]    (0,5.59) -- (0,-5.59)   ;
\draw [shift={(570,70)}, rotate = 270] [color={rgb, 255:red, 0; green, 0; blue, 0 }  ][line width=0.75]    (0,5.59) -- (0,-5.59)   ;
\draw    (570,210) -- (570,237) -- (570,270) ;
\draw [shift={(570,270)}, rotate = 270] [color={rgb, 255:red, 0; green, 0; blue, 0 }  ][line width=0.75]    (0,5.59) -- (0,-5.59)   ;
\draw [shift={(570,210)}, rotate = 270] [color={rgb, 255:red, 0; green, 0; blue, 0 }  ][line width=0.75]    (0,5.59) -- (0,-5.59)   ;
\draw    (570,180) -- (570,207) -- (570,210) ;
\draw [shift={(570,210)}, rotate = 270] [color={rgb, 255:red, 0; green, 0; blue, 0 }  ][line width=0.75]    (0,5.59) -- (0,-5.59)   ;
\draw [shift={(570,180)}, rotate = 270] [color={rgb, 255:red, 0; green, 0; blue, 0 }  ][line width=0.75]    (0,5.59) -- (0,-5.59)   ;

\draw (630,12) node [anchor=north west][inner sep=0.75pt]   [align=left] {$\displaystyle E_{0}$};
\draw (630,90) node [anchor=north west][inner sep=0.75pt]   [align=left] {$\displaystyle E_{1}$};
\draw (630,202) node [anchor=north west][inner sep=0.75pt]   [align=left] {$\displaystyle E_{2}$};
\draw (572,86.5) node [anchor=north west][inner sep=0.75pt]   [align=left] {$\displaystyle \varepsilon _{1}$};
\draw (572,226.5) node [anchor=north west][inner sep=0.75pt]   [align=left] {$\displaystyle \varepsilon _{1}$};
\draw (572,183) node [anchor=north west][inner sep=0.75pt]   [align=left] {$\displaystyle \varepsilon _{2}$};
\end{tikzpicture}
        \caption{Venetian Blind Construction}
        \label{E_n-construction}
    \end{figure}
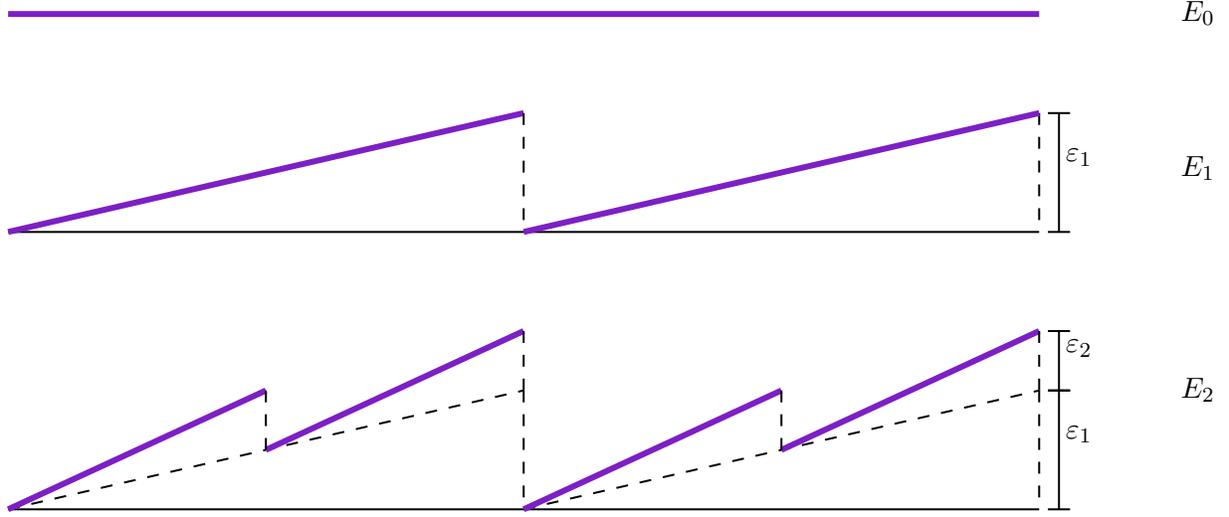

    \medskip
    Our lemma will follow by establishing these four claims: 
    \begin{enumerate}
        \item $E = \lim\limits_{n \to \infty} E_n$ under the Hausdorff metric 
        \item $E$ is totally disconnected.
        \item $\pi_\theta(E) \supset \pi_\theta(E_0)$ for all $\theta \in [0, \frac{\pi}{2}]$. 
        \item $E$ is a rectifiable 1-set. 
    \end{enumerate} 

    \noindent(1). To prove the first claim, notice that $E_{i+1} \subset (E_i)_{\varepsilon_{i+1}}$. So $d_H(E_i, E_{i+1}) < \varepsilon_{i+1}$. Then, $d_H(E_m, E_n) < \sum_{i=n}^m \varepsilon_{i+1}$ by the triangle inequality. If we choose $\{\varepsilon_i\}$ to be a summable sequence, we see that $E_n$ is a Cauchy sequence in the Hausdorff metric. By completeness, $E_n$ converges to some compact set $E$.

\medskip

   \noindent (2). We will now show the second claim. 
    The vertical segment from $(\frac{1}{2}, 0)$ to $(\frac{1}{2}, t)$ where $t \in [0, \varepsilon_1]$ is not in $E$. So there exist bounded open sets $U_0$ and $U_1$ such that 
    \[E \cap \left\{(x, y) : 0 \leq x < \frac{1}{2} \right \} \subseteq U_0 \; \; \text{ and } \; \; E \cap \left\{(x, y) : \frac{1}{2} \leq x < 1 \right \} \subseteq U_1.\]
    For induction, suppose $U_\sigma$, $\sigma\in \{0,1\}^n$, is an open set such that 
    \[E \cap \left \{ (x, y) : \frac{i}{2^n} \leq x < \frac{i+1}{2^n} \right \} \subseteq U_\sigma\]
    By the Venetian blind construction, there exist vertical jumps whenever the $x$-coordinate is a dyadic point, giving us that
    \[E \cap \left \{ (x, y) : \frac{2i}{2^{n+1}} \leq x < \frac{2i+1}{2^{n+1}} \right \} \subseteq U_{\sigma_0} \; \; \text{ and } \; \; E \cap \left \{ (x, y) : \frac{2i+1}{2^{n+1}} \leq x < \frac{2i+2}{2^{n+1}} \right \} \subseteq U_{\sigma_1}, \]
    where $U_{\sigma_0}$ and $U_{\sigma_1}$ are both open sets inside $U_{\sigma}$. Thus, 
    \[E \subset \bigcap_{n=1}^\infty \bigcup_{\sigma \in \Sigma^n} U_\sigma, \; \; \text{ where } \Sigma = \{0, 1\}.\]
    All $U_\sigma$ are disjoint from one another by construction, and one can ensure the maximum of the diameters of $U_{\sigma}$ among $\sigma\in \Sigma^{n}$ tends to zero as $n\to\infty$ since $\{\epsilon_n\}$ is summable. Hence $E$ is totally disconnected.
    %

\medskip

    \noindent{(3).} The third claim follows from showing that any line with slope $-\tan \theta$ for $\theta \in [0, \frac{\pi}{2}]$ passing through $E_0$ must also pass through $E$. Let $L$ be such a line through $E_0$. By the Venetian blind construction, $L$ must also pass through $E_n$ for all $n$, i.e. $L\cap E_n\ne\varnothing$. 
    As $E_n \to E$ in the Hausdorff metric, we see $L$ must also pass through $E$. Hence $\pi_\theta(E) \supset \pi_\theta(E_0)$.

\medskip

    \noindent(4) For the fourth claim, we will show that $E$ is indeed rectifiable by constructing a curve with finite ${\mathcal H}^1$ measure containing $E$. Define the curve $\Gamma_1$ as 
    \[\Gamma_1 = E_1 \cup {\mathcal V}_1 \]
    where  $${\mathcal V}_1 = L\left( \left( \frac{1}{2}, 0 \right), \; \left( \frac{1}{2}, \varepsilon_1 \right)  \right) \cup L\left( (1, 0) \; (1, \varepsilon_1)\right): = v_{1,1}\cup v_{2,1}.
    $$
    That is, $\Gamma_1$ is the union of $E_1$ with the collection $\mathcal{V}_1$ of vertical line segments that connect the disjoint line segments of $E_1$. 
        Suppose $\Gamma_j$ is constructed, such that
    \[\Gamma_j = E_j \cup \mathcal{V}_j.\]
   We construct $\Gamma_{j+1}$ as follows. Recall that $E_j = \bigcup_{k=1}^{2^j} L_{k, j}$. For each $r = 1,2,...,2^{j+1}$, we consider $L_{k_r, j}$ to be the unique line segment with the largest $y$-coordinate that intersects $x = \frac{r}{2^{j+1}}$. Define $v'_{r, j+1}$ to be the vertical line segment at $x = \frac{r}{2^{j+1}}$ of length $\varepsilon_{j+1}$,  beginning at $L_{k_r, j}$, and moving vertically $\varepsilon_{j+1}$ units. Then we have $\mathcal{V}_{j+1}$ is the collection of vertical line segments $v_{r, j+1}$ defined as 
    $$
        v_{r, j+1} = \begin{cases}
            v'_{r, j+1} \; & \; \text{ if $r$ is odd} \\
            v_{r/2, j} \cup v'_{r, j+1} \; & \; \text{ if $r$ is even.}
        \end{cases}
    $$
    Then we can define $\Gamma_{j+1}$ as 
    \[\Gamma_{j+1} = E_{j+1} \cup \mathcal{V}_{j+1}.\]
    $\Gamma_3$ is represented by the bolded line in Figure \ref{Gamma-3}. 

    \begin{figure}
        \centering

\tikzset{every picture/.style={line width=0.75pt}} 

\begin{tikzpicture}[x=0.75pt,y=0.75pt,yscale=-1,xscale=1]

\draw [color={rgb, 255:red, 0; green, 0; blue, 0 }  ,draw opacity=1 ][line width=0.75]    (50,224) -- (570,224) ;
\draw [color={rgb, 255:red, 0; green, 0; blue, 0 }  ,draw opacity=1 ][line width=0.75]  [dash pattern={on 4.5pt off 4.5pt}]  (50,224) -- (310,164) ;
\draw [color={rgb, 255:red, 0; green, 0; blue, 0 }  ,draw opacity=1 ][line width=0.75]  [dash pattern={on 4.5pt off 4.5pt}]  (310,224) -- (570,164) ;
\draw [color={rgb, 255:red, 0; green, 0; blue, 0 }  ,draw opacity=1 ][line width=0.75]  [dash pattern={on 4.5pt off 4.5pt}]  (50,224) -- (180,164) ;
\draw [color={rgb, 255:red, 0; green, 0; blue, 0 }  ,draw opacity=1 ][line width=0.75]  [dash pattern={on 4.5pt off 4.5pt}]  (180,194) -- (310,134) ;
\draw [color={rgb, 255:red, 0; green, 0; blue, 0 }  ,draw opacity=1 ][line width=0.75]  [dash pattern={on 4.5pt off 4.5pt}]  (440,194) -- (570,134) ;
\draw [color={rgb, 255:red, 0; green, 0; blue, 0 }  ,draw opacity=1 ][line width=0.75]  [dash pattern={on 4.5pt off 4.5pt}]  (310,224) -- (440,164) ;
\draw    (580,164) -- (580,191) -- (580,224) ;
\draw [shift={(580,224)}, rotate = 270] [color={rgb, 255:red, 0; green, 0; blue, 0 }  ][line width=0.75]    (0,5.59) -- (0,-5.59)   ;
\draw [shift={(580,164)}, rotate = 270] [color={rgb, 255:red, 0; green, 0; blue, 0 }  ][line width=0.75]    (0,5.59) -- (0,-5.59)   ;
\draw    (580,134) -- (580,161) -- (580,164) ;
\draw [shift={(580,164)}, rotate = 270] [color={rgb, 255:red, 0; green, 0; blue, 0 }  ][line width=0.75]    (0,5.59) -- (0,-5.59)   ;
\draw [shift={(580,134)}, rotate = 270] [color={rgb, 255:red, 0; green, 0; blue, 0 }  ][line width=0.75]    (0,5.59) -- (0,-5.59)   ;
\draw [color={rgb, 255:red, 124; green, 31; blue, 197 }  ,draw opacity=1 ][line width=2.25]    (50,224) -- (115,179) ;
\draw [color={rgb, 255:red, 124; green, 31; blue, 197 }  ,draw opacity=1 ][line width=2.25]    (115,194) -- (180,149) ;
\draw [color={rgb, 255:red, 124; green, 31; blue, 197 }  ,draw opacity=1 ][line width=2.25]    (505,164) -- (570,119) ;
\draw [color={rgb, 255:red, 124; green, 31; blue, 197 }  ,draw opacity=1 ][line width=2.25]    (440,194) -- (505,149) ;
\draw [color={rgb, 255:red, 124; green, 31; blue, 197 }  ,draw opacity=1 ][line width=2.25]    (375,194) -- (440,149) ;
\draw [color={rgb, 255:red, 124; green, 31; blue, 197 }  ,draw opacity=1 ][line width=2.25]    (310,224) -- (375,179) ;
\draw [color={rgb, 255:red, 124; green, 31; blue, 197 }  ,draw opacity=1 ][line width=2.25]    (245,164) -- (310,119) ;
\draw [color={rgb, 255:red, 124; green, 31; blue, 197 }  ,draw opacity=1 ][line width=2.25]    (180,194) -- (245,149) ;
\draw    (580,120) -- (580,131) -- (580,134) ;
\draw [shift={(580,134)}, rotate = 270] [color={rgb, 255:red, 0; green, 0; blue, 0 }  ][line width=0.75]    (0,5.59) -- (0,-5.59)   ;
\draw [shift={(580,120)}, rotate = 270] [color={rgb, 255:red, 0; green, 0; blue, 0 }  ][line width=0.75]    (0,5.59) -- (0,-5.59)   ;
\draw [color={rgb, 255:red, 124; green, 31; blue, 197 }  ,draw opacity=1 ][line width=2.25]    (115,194) -- (115,179) ;
\draw [color={rgb, 255:red, 124; green, 31; blue, 197 }  ,draw opacity=1 ][line width=2.25]    (375,194) -- (375,179) ;
\draw [color={rgb, 255:red, 124; green, 31; blue, 197 }  ,draw opacity=1 ][line width=2.25]    (310,224) -- (310,119) ;
\draw [color={rgb, 255:red, 124; green, 31; blue, 197 }  ,draw opacity=1 ][line width=2.25]    (245,164) -- (245,149) ;
\draw [color={rgb, 255:red, 124; green, 31; blue, 197 }  ,draw opacity=1 ][line width=2.25]    (180,194) -- (180,149) ;
\draw [color={rgb, 255:red, 124; green, 31; blue, 197 }  ,draw opacity=1 ][line width=2.25]    (570,224) -- (570,119) ;
\draw [color={rgb, 255:red, 124; green, 31; blue, 197 }  ,draw opacity=1 ][line width=2.25]    (505,164) -- (505,149) ;
\draw [color={rgb, 255:red, 124; green, 31; blue, 197 }  ,draw opacity=1 ][line width=2.25]    (440,194) -- (440,149) ;

\draw (591,180) node [anchor=north west][inner sep=0.75pt]   [align=left] {$\displaystyle \varepsilon _{1}$};
\draw (591,140) node [anchor=north west][inner sep=0.75pt]   [align=left] {$\displaystyle \varepsilon _{2}$};
\draw (591,112) node [anchor=north west][inner sep=0.75pt]   [align=left] {$\displaystyle \varepsilon _{3}$};

\end{tikzpicture}
        \caption{$\Gamma_3$ curve}
        \label{Gamma-3}
    \end{figure}
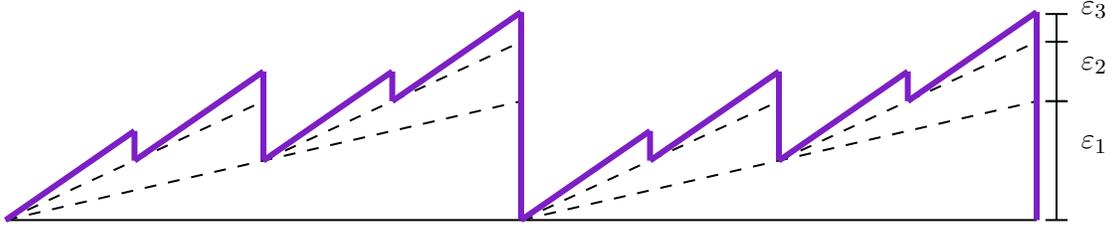

    In the proof of the first claim, we showed that $E_n \to E$ under $d_H$. Since the maximal distance between $\Gamma_n$ and $\Gamma_{n+1}$ is $\varepsilon_{n+1}$ under $d_H$, we can apply the same strategy to show that $\Gamma_n$ converges to some compact set $\Gamma$. We now claim  that $E \subset \Gamma$:

    \medskip
    Fix $\varepsilon > 0$. For every $x \in E$, there exists some sequence of points $\{x_n\}_{n \in \N}$ such that each $x_n \in E_n$ and $|x_n - x| \to 0$. That is, there exists some $N_1 \in \N$ such that $n \geq N_1 $ implies that $|x_n - x| < \frac{\varepsilon}{2}$. As $E_n\subset \Gamma_n$, $x_n\in \Gamma_n$.     Because $E_n \subset \Gamma_n$, we have $x_n \in \Gamma_n$.    By the convergence of $\Gamma_n \to \Gamma$ as $n \to \infty$ in $d_H$, we have that there exists some $N_2 \in \N$ such that $n \geq N_2$ implies  that $d_H(\Gamma_n,\Gamma) < \frac{\varepsilon}{2}$. Choose $N = \max(N_1, N_2)$. Then for all $n \geq N$, we have $x_n\in (\Gamma)_{\varepsilon/2}$ and hence $x\in (\Gamma)_{\varepsilon}$ where we recall that $(\Gamma)_{\varepsilon}$ is the $\varepsilon$-neighborhood of $\Gamma$.     Because $\varepsilon$ is arbitrary and $\Gamma$ is compact, we take $\varepsilon \to 0$ and see that $x \in \Gamma$. Hence $E \subset \Gamma$.

\medskip

    To complete our proof, we will need the following theorem from \cite[p.39]{falconer1985geometry}. Recall that a continuum is a compact connected metric space. 
    \begin{theorem} \label{Falconer85}
        Let $\{\Gamma_n\}$ be a sequence of continua in $\R^d$ convergent in the Hausdorff metric to a compact set $\Gamma$. Then $\Gamma$ is a continuum and 
        \[\mathcal{H}^1(\Gamma) \leq \liminf_{n \to \infty} \mathcal{H}^1(\Gamma_n).\]
    \end{theorem}
   $\Gamma_n$ is compact and connected.  Therefore $\{\Gamma_n\}$ is indeed a sequence of continua that converges to $\Gamma$ under $d_H$. We can then apply Theorem \ref{Falconer85} to see that $\Gamma$ itself is a continuum - and hence connected - as well as that 
    \[\mathcal{H}^1(\Gamma) \leq \liminf_{n \to \infty} \mathcal{H}^1(\Gamma_n)\]
    It remains to compute $\mathcal{H}^1 (\Gamma_n)$ which is exactly the total  length of the line segments in $E_n\cup{\mathcal V}_n$.
    Note that 
    $$
    {\mathcal H}^1({\mathcal V}_n) = \sum_{j=1}^{n}2^j\varepsilon_j
    $$
    and by elementary geometry and the Pythagorean theorem, the length of one of the ``blinds" in $E_n$ is equal to 
    $$
    \sqrt{\left(\frac{1}{2^n}\right)^2+ \left(\frac{\varepsilon_1}{2^{n-1}}+\frac{\varepsilon_2}{2^{n-2}}+\cdots+\varepsilon_n\right)^2} = \frac1{2^n} \cdot \sqrt{1+\left( \sum_{j=1}^n 2^j \varepsilon_j \right)^2}.
    $$
    Therefore, summing all $2^n$ ``blinds" which are of the same length, we have
    $$
    {\mathcal H}^1(\Gamma_n) = \sum_{j=1}^n 2^j \varepsilon_j + \sqrt{1+ \left( \sum_{j=1}^n 2^j \varepsilon_j \right)^2}. 
    $$
    If we choose our rapidly decreasing positive sequence $\{\varepsilon_j\}_{j=1}^\infty$ such that $\sum_{j=1}^\infty 2^j \varepsilon_j  < \infty,$ then we see that 
    \begin{align*}
        {\mathcal H^1}(\Gamma)\le \liminf_{n \to \infty} {\mathcal H}^1(\Gamma_n) <\infty.
    \end{align*}
    Additionally, since $\Gamma$ contains vertical line segments at each dyadic point, we know that $\mathcal{H}^1(\Gamma) > 0$. Therefore $\Gamma$ is indeed a 1-set.  

    \medskip
    

    Finally, by the monotonicity of measure,  $\mathcal{H}^1(E) \leq \mathcal{H}^1(\Gamma) < \infty$. For a contradiction, suppose that $\mathcal{H}^1(E) = 0$. By the Bescovitch projection theorem, $E$ will be purely unrectificable and almost all projection has ${\mathcal H}^1$ measure zero.     However by the third claim, we know that $1 \geq \mathcal{H}^1(\pi_\theta(E))$ for all $\theta \in [0, \frac{\pi}{2}]$. To avoid a contradiction, we deduce that $0 < \mathcal{H}^1(E) < \infty$. Thus $E$ is a 1-set and rectifiable, justifying our last claim.
    \end{proof}

We will use Lemma \ref{tot-discon-rect-set-lemma} to prove Theorem \ref{theorem_Main1} (2). The proof in high dimensions requires an induction, so we first settle it for a polygon on $\R^2$. 

\begin{proposition}\label{prop_dim2} Let $C$ be a convex polygon on $\R^2$. Then there exists a compact totally disconnected rectifiable 1-set $K$ such that $\pi_\theta(K) = \pi_\theta([0, 1]^2)$ for all $\theta \in [0, \pi)$. Hence, polygons are fractal decomposable by a compact totally disconnected rectificable 1-set.     
\end{proposition}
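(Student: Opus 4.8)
The plan is to deduce the proposition from Lemma \ref{tot-discon-rect-set-lemma} applied edge-by-edge, after two reformulations. First, for a compact $K \subseteq C$ the inclusion $\pi_\theta(K) \subseteq \pi_\theta(C)$ is automatic, so the desired identity is equivalent to $\pi_\theta(K) \supseteq \pi_\theta(C)$ for every $\theta$; reading this fiberwise and letting $\theta$ range over $[0,\pi)$ turns it into the purely incidence statement that \emph{every line meeting $C$ also meets $K$}. Second, once $K \subseteq C$ and $\pi_\theta(K) = \pi_\theta(C)$ for all $\theta$, the support functions of $\conv(K)$ and of $C$ agree in all directions, so $\conv(K) = C$; thus condition (1) of fractal-decomposability comes for free, and it suffices to produce a compact totally disconnected rectifiable $1$-set $K \subseteq C$ meeting every line that meets $C$.

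Next I would reduce from $C$ to its edges. Since $C$ is compact and convex, a line meets $C$ if and only if it meets $\partial C = e_1 \cup \cdots \cup e_m$, and the latter happens if and only if the line meets some edge $e_j$. Hence it is enough to construct, for each edge $e_j$, a compact totally disconnected rectifiable $1$-set $K_j \subseteq C$ meeting every line that meets $e_j$, and then take $K = \{\text{vertices of } C\} \cup \bigcup_{j=1}^m K_j$.

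To build $K_j$ I would transport the Venetian-blind set of Lemma \ref{tot-discon-rect-set-lemma} onto $e_j$. After a rigid motion placing $e_j$ on a horizontal line with $C$ above it, the (rescaled) lemma set over a horizontal subsegment of length $s$ is a compact totally disconnected rectifiable $1$-set, lies above its base in a strip of height at most $\delta s$ where $\delta$ can be made as small as we wish by letting $\{\varepsilon_i\}$ decrease rapidly, and meets every line through that subsegment; crucially, rigid motions and dilations map lines to lines and preserve all of these properties. The obstacle is containment: a single copy over all of $e_j$ occupies a rectangle of constant height $\delta\,|e_j|$, whereas $C$ pinches to height $0$ at the endpoints of $e_j$, so it protrudes near each vertex. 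I would resolve this with a Whitney-type subdivision. Using that the height function above $e_j$ is concave and hence $\gtrsim c\,\mathrm{dist}$ to the nearer endpoint (with $c>0$ governed by the interior angles of $C$), subdivide $e_j$ by geometric sequences $x_k \sim \rho^k$ accumulating at each endpoint, place a rescaled blind copy over each piece, and fix $\delta$ small enough (depending only on the angles of $C$) that the bound $\delta s_k \le c\,x_{k+1}$ holds for every $k$, so that each copy fits inside $C$. Any line meeting $e_j$ meets some subsegment and hence meets $K_j$, while $K_j \subseteq C$ by construction.

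Finally I would verify the global properties of $K$. Being a countable union of rescaled copies of the lemma set, $K$ is rectifiable, and summing lengths gives $\mathcal{H}^1(K) \le M_0 \sum_j |e_j| = M_0\cdot\mathrm{perimeter}(C) < \infty$ (with $M_0$ the $\mathcal{H}^1$-measure of the unit blind set), while $\mathcal{H}^1(K) > 0$ since $K$ contains nondegenerate pieces, so $K$ is a $1$-set; adjoining the finitely many vertices makes $K$ closed, hence compact, as the pieces accumulate only at the vertices with diameters tending to $0$. Total disconnectedness is the one point where the finite-union Proposition \ref{prop_totally-disconnect} does not apply directly; instead I would separate points lying over distinct subsegments by vertical cuts placed in the gaps between subsegments, separate a vertex from the pieces accumulating at it using that those pieces sit in thin strips that do not encircle it, and invoke the total disconnectedness of each individual piece internally, thereby exhibiting the required clopen separations. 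Combined with the first paragraph, this gives $\pi_\theta(K) = \pi_\theta(C)$ for all $\theta$ and $\conv(K) = C$. The main difficulty throughout is the containment near the vertices, which forces the countable Whitney subdivision and, with it, the nonroutine verification that a countable (rather than finite) union of totally disconnected compact sets remains totally disconnected.
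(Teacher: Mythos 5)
Your proposal is correct and follows essentially the same route as the paper: reduce the projection identity to the incidence statement for the edges, transport rescaled copies of the Venetian-blind set from Lemma \ref{tot-discon-rect-set-lemma} onto each edge, handle the pinching of $C$ near the vertices by a geometric (Whitney-type) subdivision of the edge accumulating at its endpoints, adjoin the vertices, and verify closedness and total disconnectedness of the resulting countable union. The only divergence is minor: the paper splits each side in half and invokes the countable subdivision only toward vertices where the adjacent angle is acute (a single blind copy with small $\varepsilon_j$ suffices in the obtuse case), whereas you subdivide uniformly at both endpoints via the concavity of the height function --- and you actually spell out the countable-union total-disconnectedness and $\conv(K)=C$ verifications that the paper leaves as routine omitted details.
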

  
\begin{proof} Let $C$ be a $n$-sided polygon, and let $L_1, L_2, L_3,...,L_n$  be the sides of the polygon. We claim that for each $j = 1,...,n$, there exists a compact totally disconnected rectifiable set $E_j\subset C$ such that $\pi_{\theta}(E_j) \supset \pi_{\theta}(L_j)$ for all $\theta\in[0,\pi)$. Then we take $E$ to be the union of $E_j$, which is still a compact totally disconnected rectifiable set. The union is a subset of $C$, and since $\pi_{\theta}(C) = \bigcup_{j=1}^n\pi_{\theta}(L_j)$, this union gives  us 
$$
\pi_{\theta}(C) \subset \bigcup_{j=1}^n\pi_{\theta}(E_j) = \pi_{\theta}(E). 
$$
But $E\subset C,$ and we have the other inclusion for the projection as well. Thus $E$ is our desired set. 

\medskip

To justify the claim  we take a side $L = L_j$ for some $j$. Suppose that $L$  connects with the other two sides, called $L'$ and $L''$, at some vertices. We now cut $L = \ell'\cup\ell''$ into two lines where $L'$ connects with $\ell'$ and similarly for the others. Consider $L'\cup \ell'$ and $L''\cup \ell''$.  We have two cases depending on the angle between $L'$ and $\ell'$ (and respectively $L''$ and $\ell''$).

\medskip

(i) Suppose that $L'$ makes an obtuse angle with $\ell'$ (i.e. the angle lies between $[\pi/2, \pi)$). We simply take $E$ in Lemma \ref{tot-discon-rect-set-lemma} for $\ell'$. For $\epsilon_j$ sufficiently small, $E$ must be inside the polygon and  $\pi_{\theta}(E) = \pi_{\theta}(\ell')$ for all $\theta\in [0,\pi)$. 

\medskip

(ii) Suppose that $L'$ makes an acute angle with $\ell'$. Without loss of generality, assume that $\ell'  = [0,1]\times \{0\}$, and $L'$ starts at $(1,0)$, making an acute angle with $\ell'$. Due to the acute angle, the $E$ constructed in Lemma \ref{tot-discon-rect-set-lemma} cannot lie in $C$.  However, we can further decompose $\ell'$ into countable interior disjoint line segments towards the vertex: 
$$
\ell' = \bigcup_{j=1}^{\infty}\ell_j, \ \ell_j = [1-2^{-j},1-2^{-j-1}]\times \{0\}. 
$$
We apply Lemma \ref{tot-discon-rect-set-lemma} for each $\ell_j$ to construct $E_j\subset C$ (by taking sufficiently small $\varepsilon_j$). Define $E = \bigcup_{j=1}^{\infty}{E_j}\cup\{(1,0)\}$. Then $\pi_{\theta}(E)\supset \pi_{\theta}(\ell')$ for all $\theta\in[0,\pi)$. It remains to check if $E$ is closed and totally disconnected.  $E$ is closed because if $x = (x_1,x_2)\in \R^2\setminus E$, then for $x_1>1$ and for sufficiently small $\delta>0$, the ball $B(x,\delta)\subset (1-2^{-j},1-2^{-j-1})\times \R$. But in this strip, $E\cap [1-2^{-j},1-2^{-j-1}]\times \R = E_j$ and $E_j$ is closed. The ball $B(x,\delta)\subset \R^2\setminus E$. Hence, $\R^2\setminus E$ is an open set, showing that $E$ is closed. Finally, it is a routine check that $E$ is totally disconnected using Theorem \ref{theorem_tD} (2), so we will omit this detail. THe proof of this proposition is now complete. 
\end{proof}
\medskip


{\bf Proof of Theorem \ref{theorem_Main1} (2).} We prove the theorem by induction on dimension $d$. By Proposition \ref{prop_dim2}, the theorem has been proven for $d=2$. Suppose that for all $(d-1)$-dimensional convex polytopes $C$, there exists a rectifiable 1-set $K$ such that $K\subset C$ and $K$ projects very thick shadows in every one-dimensional subspace. We now prove that it is also true for dimension $d$. 

\medskip

To prove the statement, let $C$ be a convex polytope of dimension $d$. Suppose first that  $C$ is lying on a $(d-1)$-dimensional hyperplane $H_0$. By the induction hypothesis, we construct the rectifiable 1-set $K$ inside $C\cap H_0$. In this case, for any $(d-1)$-dimensional hyperplanes $H$ such that $H$ intersects $C$, $H\cap H_0$ is a $(d-2)$-dimensional hyperplane which intersects $C$. As $K$ projects very thick shadows in every 1-dimensional subspace, every $d-2$-dimensional subspace must intersect $K$. So $H\cap H_0 \cap K\ne \emptyset$, and hence $H\cap K\ne \emptyset$. 

\medskip

Suppose now that $C$ is not lying in any $(d-1)$-dimensional hyperplane. Then $C$ admits a half-space representation: 
$$
C = \bigcap_{i=1}^M {\mathcal H}_i,
$$
where ${\mathcal H}_i$ are closed half-spaces. Moreover, each ${\mathcal H}_i\cap C$ is a $(d-1)$-dimensional convex polytope. We now construct rectifiable 1-sets $K_i$ for each ${\mathcal H}_i\cap C$ by the induction hypothesis, and define 
$$
K = \bigcup_{i=1}^M K_i.
$$  
Then each $(d-1)$-dimensional hyperplane must intersect one of the faces ${\mathcal H}_i\cap C$. By the induction hypothesis, the hyperplane $H_i$ must intersect $K_i$. Thus, every $(d-1)$-dimensional hyperplane must intersect $K$. As $K$ is a finite union of compact totally disconnected rectifiable $1$-sets, it must be a compact totally disconnected rectifiable $1$-set as well. The proof is complete. 
 




\medskip

\section{Remarks and open questions}

This paper provides a detailed study regarding the projections of very thick shadows onto 1-dimensional subspaces for polytopes on $\R^d$. We conclude this paper with this section by discussing more general cases.

\subsection{Other Convex sets.} First, we establish the following proposition showing that 1-dimensional fractal decomposability is not possible for general convex sets. The definition of exposed points can be found in Section 2. 

\medskip

\begin{proposition}\label{theorem_exposed}
    Let $C$ be a closed convex set in $\R^d$. 
Suppose that $C$ is fractal decomposable with the totally disconnected Borel set $K$. Then all exposed points of $C$ are also in $K$, and the set of exposed points is totally disconnected.
\end{proposition}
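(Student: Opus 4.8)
The plan is to prove the two assertions in order, with the first carrying all the real content and the second following from a soft topological fact. Throughout I assume $C$ is $1$-dimensional fractal-decomposable with $K$, so that $K \subset \conv(K) = C$ and $\pi_W(K) = \pi_W(C)$ for every $W \in G(d,1)$. Write $\mathrm{Exp}(C)$ for the set of exposed points of $C$.

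First I would show $\mathrm{Exp}(C) \subset K$. Let $x$ be an exposed point, witnessed by a supporting hyperplane $H = \{y : \langle n, y\rangle = \alpha\}$ with $|n| = 1$ and $H \cap C = \{x\}$; after possibly replacing $n$ by $-n$ we may assume $C \subset \{y : \langle n, y\rangle \le \alpha\}$, so that $\alpha = \max_{y \in C} \langle n, y\rangle$ is attained only at $x$. Taking $W = \mathrm{span}(n)$, the projection $\pi_W(y) = \langle n, y\rangle n$ records exactly the functional value $\langle n, y\rangle$, hence $\pi_W(x) = \alpha n \in \pi_W(C) = \pi_W(K)$. Thus there is some $y \in K$ with $\langle n, y\rangle = \alpha$; since $y \in K \subset C$, this forces $y \in H \cap C = \{x\}$, i.e. $y = x$, so $x \in K$. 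The mechanism here is that choosing $W$ along the normal direction of the exposing hyperplane converts the set equality $\pi_W(K) = \pi_W(C)$ into a statement about the extreme value of the linear functional, and the uniqueness built into the definition of an exposed point pins the witness down to $x$ itself. Note that no boundedness or closedness of $C$ or $K$ is needed, since $\alpha$ is attained in $C$ and hence lies in $\pi_W(C)$.

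For the second assertion, having established $\mathrm{Exp}(C) \subset K$, I would invoke the heredity of total disconnectedness: any subset of a totally disconnected space is totally disconnected, because a connected subset of the subset is intrinsically a connected subset of the ambient space and therefore lies inside a connected component of $K$, which is a singleton. Since $K$ is totally disconnected, $\mathrm{Exp}(C)$ with the subspace topology has only singleton connected components, completing the proof.

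The argument is short, so there is no serious obstacle; the one delicate point is the bookkeeping that identifies $\pi_W$ with the value of $\langle n, \cdot\rangle$ and that verifies the preimage point supplied by $\pi_W(K) = \pi_W(C)$ genuinely equals $x$ rather than merely projecting to the same point — which is precisely where the uniqueness of the exposed point's contact with $H$, together with $K \subset C$, is used. I would also remark that the argument adapts verbatim to $k$-dimensional fractal-decomposability: choosing any $V \in G(d,k)$ with $n \in V$, the relation $\pi_V(x) \in \pi_V(C) = \pi_V(K)$ yields $y \in K$ with $y - x \perp V \ni n$, so again $\langle n, y\rangle = \alpha$ and $y = x$.
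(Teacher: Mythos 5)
Your proof is correct and follows essentially the same route as the paper: both project onto the one-dimensional subspace normal to the exposing hyperplane (your $W=\mathrm{span}(n)$ is the paper's $H^\perp$), use $\pi_W(K)=\pi_W(C)$ together with $K\subset C$ and the uniqueness in the definition of an exposed point to force the witness $y$ to equal $x$, and then conclude via heredity of total disconnectedness. Your explicit identification of $\pi_W$ with the functional $\langle n,\cdot\rangle$ and the remark on the $k$-dimensional case are harmless refinements of the same argument.
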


\begin{proof}
    Let $x$ be an exposed point of $C$. There exists some hyperplane $H$ supporting $x$ such that $H \cap (C \setminus \{x\}) = \varnothing$. We now project $C$ and $K$ onto the orthogonal complement of $H$, denoted by $H^\perp$, which is a $1$-dimensional subspace. Since $C$ is fractal decomposable, we have that 
    $\pi_{H^\perp}(C) = \pi_{H^\perp}(K). $
    Therefore, there must exist some $y \in K$ such that 
    \[\pi_{H^\perp}(x) = \pi_{H^\perp}(y).\]
    Note that if $x \in H$, then 
    \[\pi_{H^\perp}^{-1}(\pi_{H^\perp}(x)) = H.\]
    We thus know that $y \in \pi_{H^\perp}^{-1}(\pi_H^\perp(x)) = H$. At the same time, $y \in K \subset C$. So $y \in C \cap H = \{x\}$. Hence, $y = x$, and we proved that the exposed points of $C$ are also in $K$. Because $K$ is totally disconnected and the set of all exposed points of $C$ is a subset of $K$, we have that the set of exposed points is also totally disconnected. 
\end{proof}

 Consequently, it is impossible for convex sets with a smooth boundary and everywhere positive Gaussian curvature, like the Euclidean ball, to be 1-dimensional fractal decomposable using totally disconnected sets. Fractal decomposability remains an open question for those convex sets with totally disconnected exposed points, but that are not polytopes.

\subsection{Higher dimensional fractal decomposability.} If we now try to project onto two or higher dimensional subspaces, total disconnectedness can no longer be obtained even for convex polytopes. The following proposition works on $\R^3$, and it clearly works on any greater dimensions.

\begin{proposition}
    Suppose that a closed convex polytope $C$ on $\R^3$ is 2-dimensional fractal decomposable with a compact set $K$. Then all edges of the polytope must be in the set $K$. Consequently, $K$ cannot be totally disconnected. 
\end{proposition}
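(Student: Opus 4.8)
The plan is to prove the statement directly by forcing every edge of $C$ into $K$ with a single, well-chosen projection. First I would record that $2$-dimensional fractal decomposability gives $\conv(K)=C$, so in particular $K\subset C$, and it gives $\pi_W(K)=\pi_W(C)$ for every $W\in G(3,2)$. The key observation I would isolate is that, for a $2$-plane of the form $W=v^{\perp}$ with $v\in\R^3\setminus\{0\}$, the fibers of $\pi_W$ are exactly the lines in direction $v$; hence $\pi_W(K)=\pi_W(C)$ says precisely that every line in direction $v$ meeting $C$ also meets $K$. Letting $v$ range over all directions, the projection condition is equivalent to: every line meeting $C$ meets $K$. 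So it would suffice, for each point $x$ in the relative interior of an edge, to produce one line $\ell$ through $x$ with $\ell\cap C=\{x\}$, since applying the condition to this line then yields $x\in K$.

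To build such a line I would exploit that every face of a polytope is exposed. Fixing an edge $e$ of $C$, there is a supporting plane $H$ (a $2$-dimensional affine hyperplane) with $C\cap H=e$. I would then fix $x$ in the relative interior of $e$ and choose any line $\ell\subset H$ through $x$ that is not parallel to $e$. Because $\ell\subset H$, one gets $\ell\cap C=\ell\cap(C\cap H)=\ell\cap e$, and since $\ell$ is a line in the plane $H$ meeting the segment $e$ transversally at its interior point $x$, this intersection is exactly $\{x\}$; thus $\ell\cap C=\{x\}$. Taking $W=v^{\perp}$ for the direction $v$ of $\ell$, the point $\pi_W(x)\in\pi_W(C)=\pi_W(K)$ provides some $y\in K$ with $\pi_W(y)=\pi_W(x)$, i.e. $y\in\ell$; as $y\in K\subset C$ we obtain $y\in\ell\cap C=\{x\}$, so $x=y\in K$. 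Hence $\mathrm{relint}(e)\subset K$, and since $K$ is compact (thus closed) and $e=\overline{\mathrm{relint}(e)}$, I conclude $e\subset K$. As $e$ was arbitrary, all edges lie in $K$.

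Finally, each edge $e$ is a nondegenerate line segment, hence a connected subset of $K$ with more than one point; a totally disconnected set has only singleton connected components, so $K$ cannot be totally disconnected. I expect the only genuinely delicate point to be the use of exposedness: one must know that the edge is cut out by a supporting plane as $C\cap H=e$ (standard for polytopes, since the interior dihedral angle at $e$ is less than $\pi$, so any plane through $\mathrm{aff}(e)$ lying strictly inside the normal cone of $e$ meets $C$ only in $e$). This is exactly what removes the possible global obstruction of a line through $x$ re-entering $C$ far away, because confining $\ell$ to $H$ makes $\ell\cap C=\{x\}$ automatic rather than merely local.
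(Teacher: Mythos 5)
Your proof is correct and takes essentially the same approach as the paper: exhibit a supporting plane $H$ with $C\cap H$ equal to the given edge, take a line in $H$ through each point of the edge that is transversal to it (the paper uses the orthogonal line), and use the projection condition to force that point into $K$. Your extra care---spelling out that $\pi_W(K)=\pi_W(C)$ for all $W\in G(3,2)$ means every line meeting $C$ meets $K$, and passing from the relative interior of the edge to the full edge via closedness of $K$---merely makes explicit steps the paper's shorter argument leaves implicit.
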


\begin{proof}
    Note that for each edge $E$, we can always find a supporting hyperplane $H$ such that $H\cap C = E$. If $C$ is 2-dimensional fractal decomposable with compact set $K$, then every line passing through $C$ must also pass through $K$. However, for each $x\in E$,  we can take a line $\ell$ in $H$ that is orthogonal to $E$ such that $\{x\} = \ell\cap E$. Then  $\ell\cap K = \{x\}$, meaning that $E\subset K$. This completes the proof. 
\end{proof}

As a result, we do not have 2-dimensional fractal decomposability with a totally disconnected compact set $K$ for a convex polytope. We need to replace total disconnectedness with some other type of connectivity. On the other hand, the unit cube $[0,1]^3$ admits a trivial solution for the 2-dimensional fractal decomposability. One can subdivide the unit cube into $n^3$ smaller cubes of sidelength $1/n$ and choose cubes that intersect the boundary of $[0,1]^3$ . The self-similar fractal will contain the boundary of $[0,1]^3$ which is trivially 2-dimensional fractal decomposable. A simple question to ask here which avoids a trivial answer is:

\medskip

{\bf (Qu):} Is it possible to construct a self-similar fractal $K$ on $[0,1]^3$ such that $[0,1]^3$ is 2-dimensional fractal decomposable with $K$ and 
$$
\dim_H(P\cap K)<2
$$
for all affine hyperplanes $P$ that passes through $[0,1]^3$? 

\medskip

In some sense, we expect $K$ should be ``plane-free". Another question worth considering is that if $K$ is totally disconnected, how many hyperplanes can we choose to guarantee that the projection of $K$ is equal to that of the convex hull on those hyperplanes?

\medskip

Finally, our Theorem \ref{theorem_Main1}(1) showed that every polytope is 1-dimensional fractal decomposable using a finite union of self-affine sets. We also showed that  triangles also admit  self-similar solutions. It will be interesting to see if every convex polytope is 1-dimensional fractal decomposable using only one totally disconnected self-similar or self-affine sets. It is also unclear if higher dimensional simplices admit a self-similar solution as simplices themselves are no longer self-similar on higher dimensions (see e.g. \cite{Hertel}).

\medskip

\noindent{\bf Acknowledgement.} The authors would like to thank Alan Chang, Yeonwook Jung, Ruxi Shi, Pablo Shmerkin and Jun Jason Luo for many helpful discussions about different parts of the paper. In particular, they  would like to thank Pablo Shmerkin for guiding us to relevant background papers as well as Alan Chang and Tuomas Oprenon for sharing with us a workable idea for the proof in Section 6.  The research of Lekha Priya Patil was supported the ARCS Foundation and the BAMM! scholarship funded by the National Science Foundation during her Master's study. LPP acknowledges their generous support.

\printbibliography

@book{falconer2004fractal,
  title={Fractal geometry: mathematical foundations and applications},
  author={Falconer, Kenneth},
  year={2004},
  publisher={John Wiley \& Sons}
}

@book{edgar2008measure,
  title={Measure, topology, and fractal geometry},
  author={Edgar, Gerald A},
  volume={2},
  year={2008},
  publisher={Springer}
}

@book{willard2012general,
  title={General topology},
  author={Willard, Stephen},
  year={2012},
  publisher={Courier Corporation}
}

@article{barany2019hausdorff,
  title={Hausdorff dimension of planar self-affine sets and measures},
  author={B{\'a}r{\'a}ny, Bal{\'a}zs and Hochman, Michael and Rapaport, Ariel},
  journal={Inventiones mathematicae},
  volume={216},
  pages={601--659},
  year={2019},
  publisher={Springer}
}

@article{mendivil2008thin,
  title={Thin sets with fat shadows: Projections of cantor sets},
  author={Mendivil, Franklin and Taylor, Tara D},
  journal={The American Mathematical Monthly},
  volume={115},
  number={5},
  pages={451--456},
  year={2008},
  publisher={Taylor \& Francis}
}

@article{falconer2013visible,
  title={The visible part of plane self-similar sets},
  author={Falconer, Kenneth and Fraser, Jonathan},
  journal={Proceedings of the American Mathematical Society},
  volume={141},
  number={1},
  pages={269--278},
  year={2013}
}

@article{farkas2020interval,
  title={Interval projections of self-similar sets},
  author={Farkas, {\'A}bel},
  journal={Ergodic Theory and Dynamical Systems},
  volume={40},
  number={1},
  pages={194--212},
  year={2020},
  publisher={Cambridge University Press}
}

@book{gruber2007convex,
  title={Convex and discrete geometry},
  author={Gruber, Peter M},
  volume={336},
  year={2007},
  publisher={Springer}
}

@book{falconer1985geometry,
  title={The geometry of fractal sets},
  author={Falconer, Kenneth J},
  number={85},
  year={1985},
  publisher={Cambridge university press}
}

@phdthesis{roinestad2010geometry,
  title={Geometry of fractal squares},
  author={Roinestad, Kristine A},
  year={2010},
  school={Virginia Tech}
}

@inproceedings{falconer2015sixty,
  title={Sixty years of fractal projections},
  author={Falconer, Kenneth and Fraser, Jonathan and Jin, Xiong},
  booktitle={Fractal geometry and stochastics V},
  pages={3--25},
  year={2015},
  organization={Springer}
}

@article{marstrand1954some,
  title={Some fundamental geometrical properties of plane sets of fractional dimensions},
  author={Marstrand, John M},
  journal={Proceedings of the London Mathematical Society},
  volume={3},
  number={1},
  pages={257--302},
  year={1954},
  publisher={Wiley Online Library}
}

@book {Mattila_1995,
    AUTHOR = {Mattila, Pertti},
     TITLE = {Geometry of sets and measures in {E}uclidean spaces},
    SERIES = {Cambridge Studies in Advanced Mathematics},
    VOLUME = {44},
      NOTE = {Fractals and rectifiability},
 PUBLISHER = {Cambridge University Press, Cambridge},
      YEAR = {1995},
     PAGES = {xii+343},
      ISBN = {},
   MRCLASS = {28A75 (49Q20)},
  MRNUMBER = {1333890},
MRREVIEWER = {Harold\ Parks},
       DOI = {},
       URL = {},
}

@article {Kaufman1968,
    AUTHOR = {Kaufman, Robert},
     TITLE = {On {H}ausdorff dimension of projections},
   JOURNAL = {Mathematika},
  FJOURNAL = {Mathematika. A Journal of Pure and Applied Mathematics},
    VOLUME = {15},
      YEAR = {1968},
     PAGES = {153--155},
      ISSN = {},
   MRCLASS = {54.70 (26.00)},
  MRNUMBER = {248779},
MRREVIEWER = {F.\ Raymond},
       DOI = {},
       URL = {},
}

@article {PS_2009,
    AUTHOR = {Peres, Yuval and Shmerkin, Pablo},
     TITLE = {Resonance between {C}antor sets},
   JOURNAL = {Ergodic Theory Dynam. Systems},
  FJOURNAL = {Ergodic Theory and Dynamical Systems},
    VOLUME = {29},
      YEAR = {2009},
    NUMBER = {1},
     PAGES = {201--221},
      ISSN = {},
   MRCLASS = {28A80 (11K60 28A78 37C45)},
  MRNUMBER = {2470633},
MRREVIEWER = {Manuel\ Mor\'{a}n},
       DOI = {},
       URL = {},
}

@article {Rossi,
    AUTHOR = {Rossi, Eino},
     TITLE = {Visible part of dominated self-affine sets in the plane},
   JOURNAL = {Ann. Fenn. Math.},
  FJOURNAL = {Annales Fennici Mathematici},
    VOLUME = {46},
      YEAR = {2021},
    NUMBER = {2},
     PAGES = {1089--1103},
      ISSN = {},
   MRCLASS = {28A80},
  MRNUMBER = {4313141},
       DOI = {},
       URL = {},
}

@article {JJMSW,
    AUTHOR = {J\"{a}rvenp\"{a}\"{a}, Esa and J\"{a}rvenp\"{a}\"{a}, Maarit
              and Suomala, Ville and Wu, Meng},
     TITLE = {On dimensions of visible parts of self-similar sets with
              finite rotation groups},
   JOURNAL = {Proc. Amer. Math. Soc.},
  FJOURNAL = {Proceedings of the American Mathematical Society},
    VOLUME = {150},
      YEAR = {2022},
    NUMBER = {7},
     PAGES = {2983--2995},
      ISSN = {},
   MRCLASS = {28A80},
  MRNUMBER = {4428883},
MRREVIEWER = {Ying\ Xiong},
       DOI = {},
       URL = {},
}

@article {Falconer_88,
    AUTHOR = {Falconer, Kenneth},
     TITLE = {The {H}ausdorff dimension of self-affine fractals},
   JOURNAL = {Math. Proc. Cambridge Philos. Soc.},
  FJOURNAL = {Mathematical Proceedings of the Cambridge Philosophical
              Society},
    VOLUME = {103},
      YEAR = {1988},
    NUMBER = {2},
     PAGES = {339--350},
      ISSN = {},
   MRCLASS = {28A75},
  MRNUMBER = {923687},
       DOI = {},
       URL = {},
}

@article {solo_98,
    AUTHOR = {Solomyak, Boris},
     TITLE = {Measure and dimension for some fractal families},
   JOURNAL = {Math. Proc. Cambridge Philos. Soc.},
  FJOURNAL = {Mathematical Proceedings of the Cambridge Philosophical
              Society},
    VOLUME = {124},
      YEAR = {1998},
    NUMBER = {3},
     PAGES = {531--546},
      ISSN = {},
   MRCLASS = {28A80 (28A78)},
  MRNUMBER = {1636589},
MRREVIEWER = {Andreas\ Schief},
       DOI = {},
       URL = {},
}

@article {Hertel,
    AUTHOR = {Hertel, Eike},
     TITLE = {Self-similar simplices},
   JOURNAL = {Beitr\"{a}ge Algebra Geom.},
  FJOURNAL = {Beitr\"{a}ge zur Algebra und Geometrie. Contributions to
              Algebra and Geometry},
    VOLUME = {41},
      YEAR = {2000},
    NUMBER = {2},
     PAGES = {589--595},
      ISSN = {},
   MRCLASS = {52B45 (28A80)},
  MRNUMBER = {1801445},
MRREVIEWER = {Richard\ Becker},
}

@article {LLR2013,
    AUTHOR = {Lau, Ka-Sing and Luo, Jun Jason and Rao, Hui},
     TITLE = {Topological structure of fractal squares},
   JOURNAL = {Math. Proc. Cambridge Philos. Soc.},
  FJOURNAL = {Mathematical Proceedings of the Cambridge Philosophical
              Society},
    VOLUME = {155},
      YEAR = {2013},
    NUMBER = {1},
     PAGES = {73--86},
      ISSN = {},
   MRCLASS = {28A80},
  MRNUMBER = {3065260},
MRREVIEWER = {Enrique\ de Amo},
       DOI = {},
       URL = {},
}

\end{document}